\newtheorem{theorem}{Theorem}[section]
\newtheorem{lemma}[theorem]{Lemma}
\newtheorem{assumption}[theorem]{Assumption}
\newtheorem{remark}{Remark}[section]
\numberwithin{equation}{section}
\numberwithin{figure}{section}
\numberwithin{table}{section}
\journal{XXX}
\begin{document}

\begin{frontmatter}

\title{A low-rank solver for the Stokes–Darcy model with random hydraulic conductivity and Beavers–Joseph condition}

\author[addr1]{Yujun Zhu}
\ead{yujun_zhu@hust.edu.cn}

\author[addr1]{Yulan Ning}
\ead{ylning@hust.edu.cn}

\author[addr2]{Zhipeng Yang} 
\ead{yangzp@lzu.edu.cn}

\author[addr3]{Xiaoming He} 
\ead{hex@mst.edu}

\author[addr1,addr4]{Ju Ming} 
\ead{jming@hust.edu.cn}

\affiliation[addr1]{organization={School of Mathematics and Statistics, Huazhong University of Science and Technology},
	city={Wuhan},
	postcode={430074},
	state={Hubei},
	country={P.R.China}}

\affiliation[addr2]{organization={School of Mathematics and Statistics, Lanzhou University},
	city={Lanzhou},
	postcode={730000},
	state={Gansu},
	country={P.R.China}}

\affiliation[addr3]{organization={Department of Mathematics and Statistics, Missouri University of Science and Technology},
	city={Rolla},
	postcode={65409},
	state={MO},
	country={USA}}

\affiliation[addr4]{organization={Hubei Key Laboratory of Engineering Modeling and Scientific Computing, Huazhong University of Science and Technology},
	city={Wuhan},
	postcode={430074},
	state={Hubei},
	country={P.R.China}}

\begin{abstract}
	
	This paper proposes, analyzes, and demonstrates an efficient low-rank solver for the stochastic Stokes-Darcy interface model with a random hydraulic conductivity both in the porous media domain and on the interface. We consider three interface conditions with randomness, including the Beavers–Joseph interface condition with the random hydraulic conductivity, on the interface between the free flow and the porous media flow. Our solver employs a novel generalized low-rank approximation of the large-scale stiffness matrices, which can significantly cut down the computational costs and memory requirements associated with matrix inversion without losing accuracy. Therefore, by adopting a suitable data compression ratio, the low-rank solver can maintain a high numerical precision with relatively low computational and space complexities. We also propose a strategy to determine the best choice of data compression ratios. Furthermore, we carry out the error analysis of the generalized low-rank matrix approximation algorithm and the low-rank solver. Finally, numerical experiments are conducted to validate the proposed algorithms and the theoretical conclusions.
	
\end{abstract}

\begin{keyword}
	
	Low-rank approximation \sep Stochastic Stokes–Darcy interface model \sep Beavers–Joseph interface condition \sep  Karhunen-Loe\.{v}e expansion \sep Monte Carlo finite element method
	
	
\end{keyword}

\end{frontmatter}

\section{Introduction}\label{sec1}

The Stokes–Darcy interface model is a fundamental model for coupling the incompressible flow and the porous media flow with the interface conditions. In the past two decades, it has become a popular research area in computational fluid dynamics, due to its wide applications in industrial and engineering problems, including interaction between surface and subsurface flows \cite{ccecsmeliouglu2009primal, MDiscacciati_1, dong2013hybrid, furman2008modeling, layton2013analysis, WJLayton_FSchieweck_IYotov_1},  groundwater system in karst aquifers \cite{ghasemizadeh2012groundwater, hu2012experimental, neven2021modeling}, oil reservoir in vuggy porous medium \cite{arbogast2007computational, arbogast2006homogenization, YGao_XMHe_LMei_XYang_1,
JHou_MQiu_XMHe_CGuo_MWei_BBai_1}, and industrial filtrations \cite{ervin2009coupled,hanspal2006numerical}. However, the sophisticated coupled structure leads to various major difficulties, and thus many attempts have been made to seek promising numerical methods for solving the deterministic coupled model, such as the domain decomposition methods \cite{YBoubendir_STlupova_2, YCao_MGunzburger_XMHe_XWang_2, chen2011parallel, MDiscacciati_LGerardo-Giorda_1,
MDiscacciati_PGervasio_AGiacomini_AQuarteroni_1,
MDiscacciati_EMiglio_AQuarteroni_1, MDiscacciati_AQuarteroni_AValli_1,  MGunzburger_XMHe_BLi_1,YLiu_YBoubendir_XMHe_YHe_1, STlupova_1}, multi-grid methods \cite{arbogast2009discretization, cai2009numerical, mu2007two}, and discontinuous Galerkin methods \cite{VGirault_BRiviere_1, GKanschat_BRiviere_1, RLi_YGao_JChen_LZhang_XMHe_ZChen_1, lipnikov2014discontinuous}.

Due to the difficulties in physically measuring its exact values, the hydraulic conductivity tensor in porous media is generally determined via statistical learning in practical applications in material science, geophysics and chemical engineering, such as linear regressions \cite{roding2020predicting}, Bayesian ridge regressions \cite{tian2022improved} and data-driven surrogate models \cite{graczyk2020predicting,takbiri2022deep}. Consequently, randomness needs to be considered to obtain more realistic and reliable results. A popular idea is to model the hydraulic conductivity by a perturbation form that consists of a deterministic function and a finite-dimensional Gaussian noise, for instance, the Karhunen-Loe\.{v}e expansions \cite{betz2014numerical, loeve1977elementary} and the polynomial chaos expansions \cite{xiu2002modeling, xiu2002wiener}. Then the original deterministic partial differential equation (PDE) is recast as a stochastic system, and we focus on the probabilistic information of the solutions, i.e., their statistical moments. Various efficient numerical methods have been proposed to solve stochastic partial differential equations (SPDEs), such as the Monte Carlo (MC) method and its variants \cite{ali2017multilevel, barth2011multi, cliffe2011multilevel,fishman2013monte, gunzburger2014stochastic, 
guth2021quasi, helton2003latin, niederreiter1992random,novak1988stochastic, yang2022multigrid}, the polynomial chaos methods \cite{jakeman2017generalized, xiu2002modeling, xiu2002wiener}, the stochastic collocation methods \cite{babuvska2007stochastic, babuvska2010stochastic,guo2017stochastic}, the sparse grid methods \cite{bao2014hybrid, nobile2008anisotropic, nobile2008sparse,yang2021stochastic}, and the stochastic Galerkin methods \cite{babuska2004galerkin,ghanem2003stochastic,leykekhman2012investigation,li2018discontinuous,sun2016priori}. 

The MC method \cite{metropolis1949monte} is one of the most popular techniques in statistical estimation owing to its convergence rate independent of the dimension of the probabilistic space. However, a large number of MC samples are required to obtain reliable statistical moments by the Central Limit Theorem, and its application to stochastic partial differential equations on a fine mesh results in large degrees of freedom and high-dimensional stiffness matrices, which significantly increase computational and memory loads. To mitigate this challenge, dimensionality reduction techniques offer a promising approach to handle these large-scale linear systems.

Dimensionality reduction aims to produce a more compact data representation with lower dimensions while achieving an acceptable loss of the intrinsic information. The vector space model \cite{turk1991eigenfaces, zhao2003face} is one of the most universally adopted dimensionality reduction approaches, which relies on vector selection. In this framework, each coefficient matrix is interpreted as a sequence of column vectors. An important usage of the vector space model is to generate the low-rank approximation of matrices by using singular value decomposition (SVD), which achieves the smallest reconstruction error in the Euclidean distance among all approximations with the rank constraint \cite{eckart1936approximation}. Despite its appealing numerical accuracy in data reconstruction, the standard SVD method becomes computationally prohibitive when applied to large-scale matrices.

To conquer the limitations in computational cost and memory usage, the idea of ``generalized" has been introduced to low-rank approximation \cite{ye2004generalized}. It aims to design a dimensionality reduction approach for a collection of matrices, where the two-sided orthogonal matrices are fixed and shared by all initial matrices. Many variants of the generalized low-rank approximation of matrices (GLRAM) have been developed to further enhance the dimensionality reduction performance. The non-iterative GLRAM method in \cite{liu2006non} used an analytical formulation instead of the iterative manner, and the simplified GLRAM method in \cite{lu2008simplified} simplified the standard structure, which both significantly mitigated the computational burden. The robust GLRAM approach demonstrated strong resilience to large sparse noise and outliers \cite{shi2015robust, zhao2016robust}. The application of low-rank approximation in solving the stochastic elliptic equation and its corresponding optimal control problem was explored in \cite{zhu2024low}. However, the current research mainly focuses on dimensionality reduction in computer vision and image processing \cite{berry1995using, LI2022251, turk1991eigenfaces, ye2004generalized, zhao2003face}, and there remains a notable gap in the literature concerning its application in solving nonlinear or coupled PDE systems.

In this article, we develop an efficient low-rank solver for the Stokes-Darcy interface model with a random hydraulic conductivity and three interface conditions, including the Beavers–Joseph interface condition with the random hydraulic conductivity. Compared with the previous work \cite{zhu2024low}, which considered the elliptic partial differential equations with random diffusion coefficient in the total domain, the randomness of the Stokes-Darcy interface model originates from the porous media domain and is transmitted into the conduit domain through the interface conditions. Unlike the construction of simple random variables in \cite{zhu2024low}, to obtain more realistic and reliable quantities of interest (QoIs), the hydraulic conductivity is perturbed by a finite-dimensional Gaussian field with zero mean, and the Karhunen-Loe\.{v}e expansion is utilized and truncated to approximate the random coefficient. Specifically, we first propose a novel generalized low-rank matrix approximation method, which performs well in balancing the numerical precision, the computational cost and memory requirement. Then we apply the dimension reduction technique to deal with the high-dimensional stiffness matrices from the stochastic coupled model. The Sherman-Morrison-Woodbury formula is also adopted to obtain the inverse of these matrices. The schematic flowchart of the low-rank solver is illustrated in Figure \ref{fig1.1}. By choosing a suitable data compression ratio, our low-rank solver can achieve significant improvements in computational efficiency and storage load without losing much accuracy. We also design a strategy to determine a good data compression ratio, which was not completed in \cite{zhu2024low}. The error analyses of our generalized low-rank approximation of matrices and the low-rank solver are conducted. And we carry out numerical experiments to demonstrate the numerical performance and confirm the theoretical results of the proposed algorithms.

\begin{figure}[htbp]

\centering
\includegraphics[width=1\textwidth]{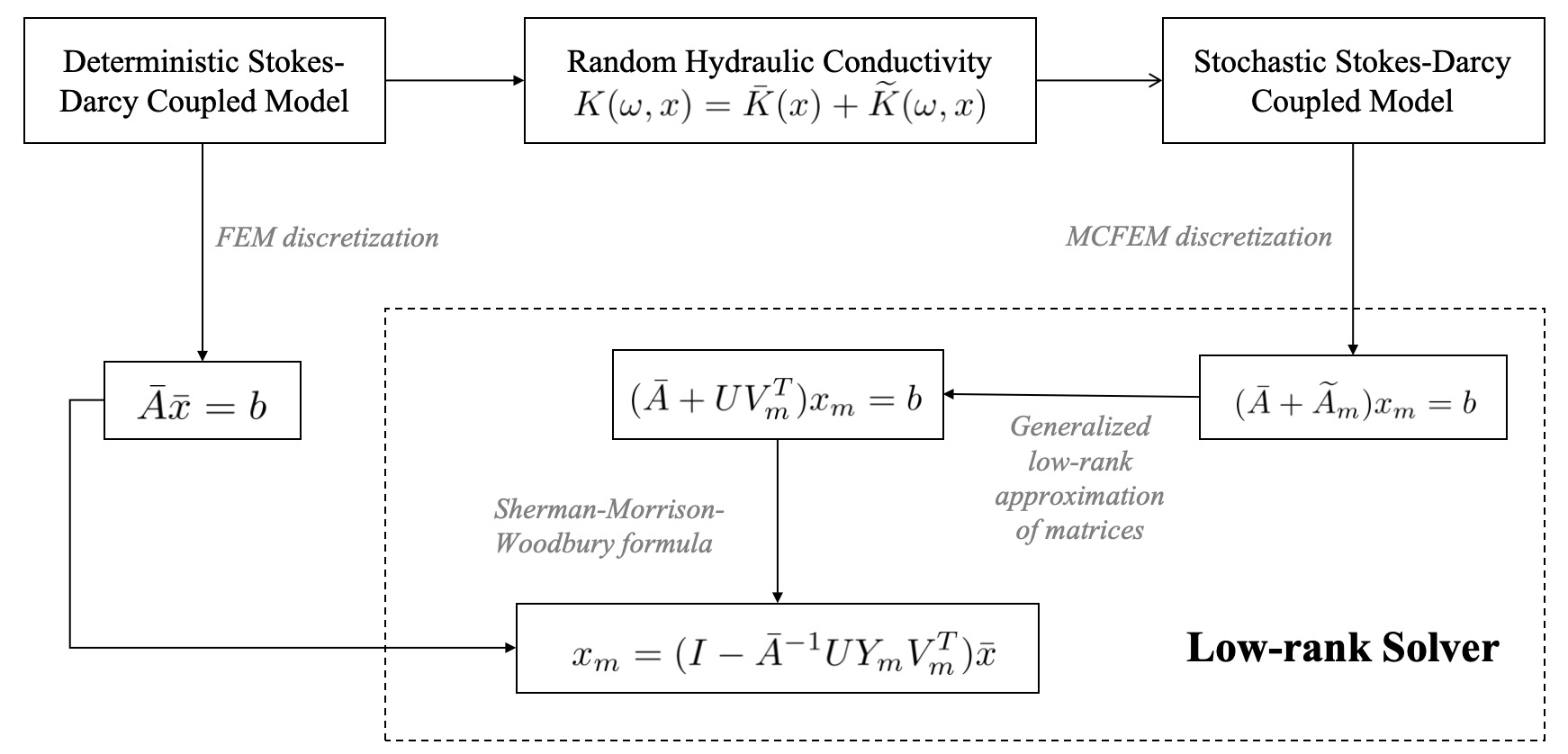}
\caption{The flowchart of the low-rank solver to the Stokes-Darcy coupled model with random hydraulic conductivity.}
\label{fig1.1}
\end{figure}

The remainder of this article is structured as follows. Section \ref{sec2} provides a brief overview of the relevant function spaces and notations. In Section \ref{sec3}, we introduce the stochastic Stokes-Darcy interface model and its weak formulation. The random hydraulic conductivity has a perturbarion form and we recall the Monte Carlo finite element method to compute the numerical solutions of the stochastic system. In Section \ref{sec4}, a novel generalized low-rank matrix approximation technique is proposed and we adopt it in solving SPDEs. Then, we design an efficient low-rank solver for the stochastic coupled model and provide its error analysis. Section \ref{sec5} presents the numerical results that illustrate the nice features of the proposed solver. Conclusions can be found in Section \ref{sec6}.

\section{Preliminaries}\label{sec2}

We begin by introducing the necessary function spaces and notations. Throughout this article, we adopt the standard definitions for Sobolev spaces from \cite{adams2003sobolev}. Let $D \subset \mathbb{R}^d$ for $d = 2, 3$ be an open, connected, bounded, and convex domain with polygonal and Lipschitz continuous boundary $\partial D$. Let $L^p(D),  1 \leq p \leq \infty$ be an usual Lebesgue space on $D$, with the $L^2(D)$-norm $\Vert \cdot \Vert = \Vert \cdot \Vert_{L^2(D)}$ induced by the inner product 
$$(f, g)  = \int_D fg dx, \, \forall f,g\in L^2(D).$$ 
Let $W^{r,q}(D), r \in \mathbb{Z}^+, 1 \leq q \leq \infty$ denote a deterministic Sobolev space on $D$. For $q=2$, define the Hilbert space $H^r(D) :=W^{r,2}(D)$ equipped with the following standard norm and semi-norm,
\begin{equation} \label{eq2.1}
\Vert y \Vert_{H^r(D)} = \displaystyle \sum_{\vert \bm{\alpha} \vert \leq r} \Vert \frac{\partial^{\bm{\alpha}} y}{\partial y^{\bm{\alpha}}} \Vert^2_{L^2(D)},
\end{equation}

\begin{equation} \label{eq2.2}
\vert y \vert_{H^r(D)} = \displaystyle \sum_{\vert \bm{\alpha} \vert = r} \Vert \frac{\partial^{\bm{\alpha}} y}{\partial y^{\bm{\alpha}}} \Vert^2_{L^2(D)}.
\end{equation}

\noindent where $\alpha$ is a multi-index with non-negative integer components $\{\alpha_i\}$, and $\bm{\alpha} = \displaystyle \sum_i \alpha_i$. 

Denote the Hilbert spaces

\begin{equation} \label{eq2.3}
H^1(D) = \{ y \in L^2(D), \partial_{x_i} y \in L^2(D), i = 1,...,n\},
\end{equation}

\noindent and

\begin{equation} \label{eq2.4}
H_0^1(D) = \{ y \in H^1(D), y \vert_{\partial D} = 0\}.
\end{equation}

\noindent Clearly, $H_0^1(D) \subset H^1(D)$ has the dual space $H^{-1}(D)$ . For d = 2, 3, we define $\bm{L}^p(D) := (L^p(D))^d$ and $\bm{H}^r(D) := (H^r(D))^d$.

Let $(\Omega, \mathscr{F}, \mathcal{P})$ be a complete probability space, where $\Omega$ is the set of outcomes, $\mathscr{F} \subset 2^\Omega$ is the $\sigma$-algebra of events, and $\mathcal{P}: \mathscr{F} \rightarrow [0,1]$ is a complete probability measure. For a real random variable $X \in (\Omega, \mathscr{F}, \mathcal{P})$, its expectation is given by

\begin{equation} \label{eq2.5}
\mathbb{E}[X] = \int_\Omega X(\omega) \mathcal{P}(\mathrm{d} \omega) = \int_{\mathbb{R}^n} x \rho(x) \mathrm{d}x,
\end{equation}

\noindent where $\rho$ denotes the joint probability density function of $X$, defined on a Borel set $\mathrm{B}$ of $\mathbb{R}$, that is, $\rho(\mathrm{B}) = \mathcal{P}(X^{-1}(\mathrm{B}))$.

Define the stochastic Sobolev space as

\begin{equation} \label{eq2.6}
L^2(\Omega; H^r(D)) = \{ y: D \times \Omega \rightarrow  H^r(D) \; \vert \; \Vert y \Vert_{L^2(\Omega; H^r(D))}  <  \infty \},
\end{equation}

\noindent equipped with the norm 

\begin{equation} \label{eq2.7}
\Vert y \Vert_{L^2(\Omega; H^r(D))} = \int_\Omega \Vert y \Vert_{H^r(D)} d \mathcal{P} = \mathbb{E}[\Vert y \Vert_{H^r(D)}].
\end{equation}

The stochastic Sobolev space $L^2(\Omega; H^r(D))$ is a Hilbert space, and it is isomorphic to the tensor product space $L^2(\Omega) \otimes L^2(H^r(D))$. For convenience, we introduce the following notations

\begin{equation} \label{eq2.8}
\begin{aligned}
	\mathcal{L}^p(D) &= L^2(\Omega; L^p(D)), \: with \: norm\: \Vert \cdot \Vert_{\mathcal{L}^p(D)} =  \Vert \cdot \Vert_{L^2(\Omega; L^p(D))},\\
	\mathcal{H}^r(D) &= L^2(\Omega;H^r(D)), \: with \: norm\: \Vert \cdot \Vert_{\mathcal{H}^r(D)} =  \Vert \cdot \Vert_{L^2(\Omega;H^r(D))},\\
	\bm{\mathcal{L}}^p(D) &= L^2(\Omega; \bm{L}^p(D)), \: with \: norm\: \Vert \cdot \Vert_{\bm{\mathcal{L}}^p(D)} =  \Vert \cdot \Vert_{L^2(\Omega; \bm{L}^p(D))},\\
	\bm{\mathcal{H}}^r(D) &= L^2(\Omega; \bm{H}^r(D)), \: with \: norm\: \Vert \cdot \Vert_{\bm{\mathcal{H}}^r(D)} =  \Vert \cdot \Vert_{L^2(\Omega; \bm{H}^r(D))}.
\end{aligned}
\end{equation}

\section{Stokes-Darcy interface model with random permeability} \label{sec3}

The Stokes–Darcy system describes the underground flow in porous media governed by the Darcy equation and the incompressible fluid flow governed by the Stokes equation, while these two flows are coupled on the interface between the conduits and porous media. To overcome the difficulty of determining the exact hydraulic conductivity tensor in the Stokes-Darcy model, we consider a coupled stochastic system where the uncertainties originally come from the porous domain and then affect the conduit domain through the interface. In this section, we present the stochastic model and recall the basic Monte Carlo finite element method for solving this model.

\subsection{The stochastic Stokes–Darcy interface model} \label{subsec3.1}

In a bounded smooth domain $D \in \mathbb{R}^d, d = 2, 3$, we consider the coupled Stokes-Darcy system of the confined flow in the porous medium region $D_p$, and the free flow in the conduit region $D_f$. For simplicity, we assume that $\partial D_p$ and $\partial D_f$ are sufficiently smooth in the work. Here $D = D_p \cup D_f$, and these two subdomains lie across the interface $\Gamma_I = \partial D_p \cap \partial D_f$. The outer boundary $\partial D$ is decomposed into two disjoint components: $\Gamma_p =\partial D_p \verb|\| \Gamma_I$ and $\Gamma_f =\partial D_f \verb|\| \Gamma_I$. Let $n_p, n_f$ denote the unit outer normal to $D_p$ and $D_f$ respectively, and $\bm{\tau}_j, j = 1,..., d-1$ denote a set of mutually orthogonal unit tangential vectors to the interface $\Gamma_I$. Note that $\bm{n}_p = -\bm{n}_f$ on $\Gamma_I$, see Figure \ref{fig3.1} for a two-dimensional sketch.

\begin{figure}[htbp]

\centering
\includegraphics[width=0.55\textwidth]{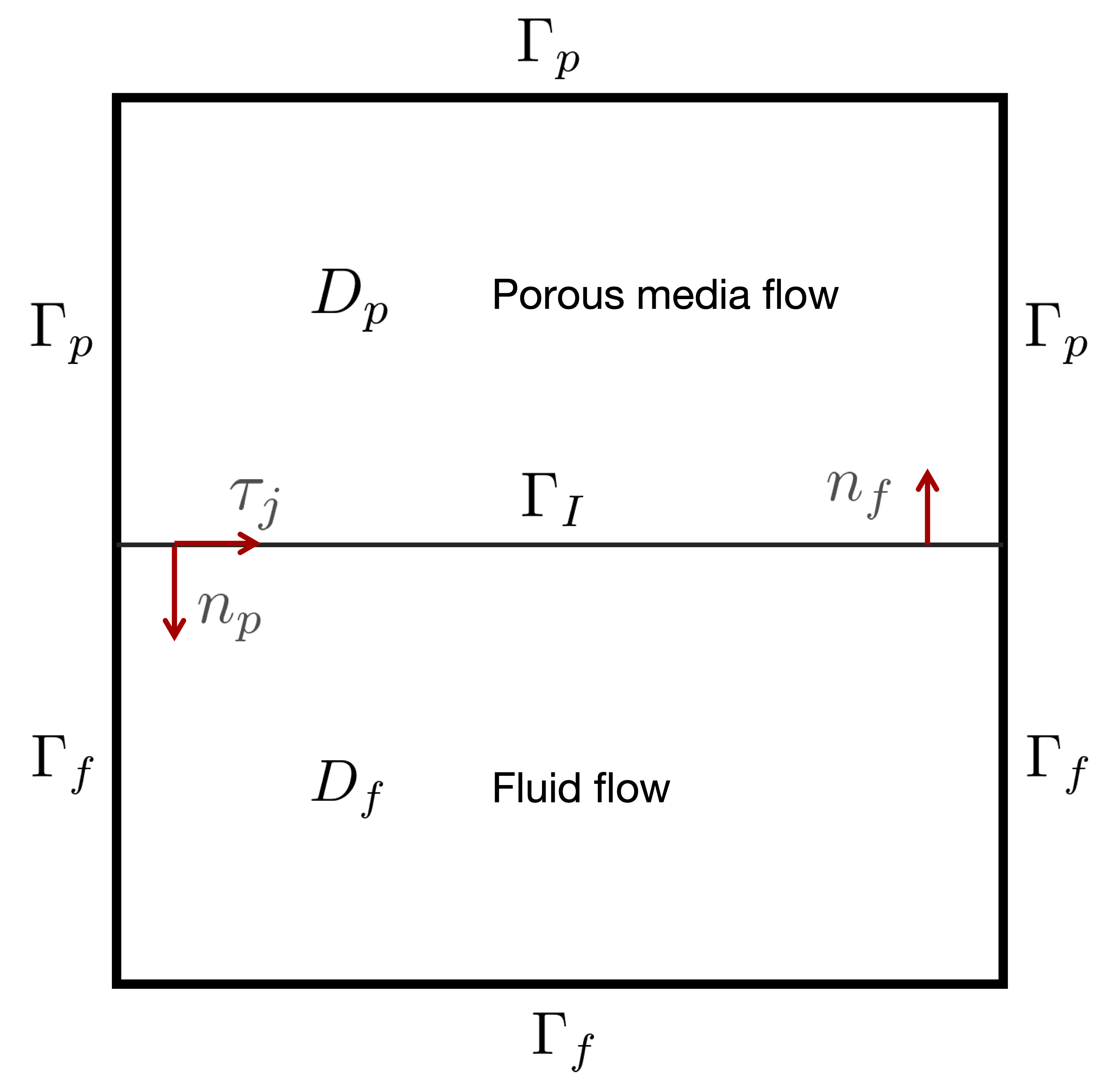}
\caption{A two-dimensional sketch of the porous media domain $D_p$, the conduit domain $D_f$, and the interface $\Gamma_I$.}
\label{fig3.1}
\end{figure}

The fluid motion in the porous media domain $D_p$ is governed by the Darcy equations \cite{barenblatt1960basic} as below
\begin{subequations}
\begin{align}
	\bm{u}_p(\omega,x)  &  =  -\mathbb{K}(\omega,x) \nabla \phi_p(\omega,x), \quad \text{in} \enspace D_p, \label{Za}\\
	\nabla \cdot \bm{u}_p(\omega,x)  &= f_p(x),\quad \text{in} \enspace D_p, \label{Zb}
\end{align}
\end{subequations}

\noindent where $\bm{u}_p$ denotes the flow velocity in the porous media, which is proportional to the gradient of $\phi_p$. $\phi_p$ is the hydraulic head and $f_p$ is the deterministic sink/source term. The uncertainties in the coupled system arise from the hydraulic conductivity tensor of the porous media $\mathbb{K}(\omega,x) \; (\omega \in \Omega, x \in \bar{D}_p$), which is defined on the complete probability space $(\Omega, \mathscr{F}, \mathcal{P})$ and assumed to be symmetric and positive definite. By substituting (\ref{Za}) into (\ref{Zb}), we reach the second-order form of the Darcy equation:

\begin{equation} \label{eq3.2}
- \nabla \cdot (\mathbb{K}(\omega,x) \nabla \phi_p(\omega,x))= f_p(x),\quad \text{in} \enspace D_p.
\end{equation}

The uncertainties in (\ref{eq3.2}) affect the solutions in the conduit domain $D_f$ through the interface $\Gamma_I$. Therefore, the incompressible fluid flow is modeled by the following stochastic Stokes equations \cite{arnold1984stable}

\begin{equation} \label{eq3.3}
\left\{
\begin{aligned}
	- \nabla \cdot \mathbb{T}(\bm{u}_f(\omega,x), p_f(\omega,x)) &= \bm{f}_f(x), \quad \text{in} \enspace D_f,\\
	\nabla \cdot \bm{u}_f(\omega,x) &= 0, \quad \text{in} \enspace D_f,
\end{aligned}
\right.
\end{equation}

\noindent where $u_f$ is the specific discharge in the conduit region $D_f$, $p_f$ is the kinematic pressure, $f_f$ is the external body force, and

\begin{equation} 
\mathbb{T}(\bm{u}_f, p_f) = -p_f \mathbb{I} + 2 \nu \mathbb{D}(\bm{u}_f), \enspace \mathbb{D}(\bm{u}_f) = \frac{1}{2} (\nabla \bm{u}_f + \nabla^T \bm{u}_f),
\nonumber
\end{equation}

\noindent are the stress tensor and deformation rate tensors. Here $\nu > 0$ is the kinematic viscosity of the fluid and $\mathbb{I}$ is the identity matrix. 

On the interface $\Gamma_I$ between the porous medium $D_p$ and the free-flow region $D_f$, we consider the following three interface conditions with random hydraulic conductivity \cite{yang2022multigrid}
\begin{subequations}
\begin{align}
	\bm{u}_f(\omega,x) \cdot \bm{n}_f(x)  &  =  (\mathbb{K}(\omega,x) \nabla \phi_p(\omega,x)) \cdot \bm{n}_p(x), \quad \text{on} \enspace \Gamma_I, \label{ZZa}\\
	-\bm{n}_f^T \mathbb{T}(\bm{u}_f(\omega,x), p_f(\omega,x)) \bm{n}_f &= g(\phi_p(\omega,x) - z), \quad \text{on} \enspace \Gamma_I, \label{ZZb}\\
	- \bm{\tau}_j^T \mathbb{T}(\bm{u}_f(\omega,x), p_f(\omega,x)) \bm{n}_f &=  \frac{\alpha \nu \sqrt{d}}{\sqrt{\text{tr}(\Pi(\omega,x))}}\bm{\tau}_j^T(\bm{u}_f(\omega,x) + \mathbb{K}(\omega,x) \nabla \phi_p(\omega,x)) , \quad \text{on} \enspace \Gamma_I, \label{ZZc}
\end{align}
\end{subequations}

\noindent where $g$ is the gravitational acceleration, $z$ is the elevation head, $\Pi(\omega,x) = \frac{\mathbb{K}(\omega,x) \nu}{g}$ is the intrinsic permeability, and $\text{tr}(\cdot)$ denotes the trace of a matrix. Particularly, the last interface condition (\ref{ZZc}) represents the classical Beavers–Joseph (BJ) condition \cite{YCao_MGunzburger_XMHe_XWang_1, YCao_MGunzburger_XHu_FHua_XWang_WZhao_1, YCao_MGunzburger_FHua_XWang_1,  feng2012non, XMHe_JLi_YLin_JMing_1, mikelic2000interface, LShan_HZheng_1, yang2022multigrid}.

For simplicity, we impose the following deterministic homogeneous Dirichlet boundary conditions 

\begin{equation} \label{eq3.5}
\phi_p = 0 \enspace \text{on} \enspace \Gamma_p \quad \text{and} \quad \bm{u}_f = 0 \enspace \text{on} \enspace \Gamma_f.
\end{equation}

\subsection{Weak formulation of the coupled problem} \label{subsec3.2}

Define the following function spaces:

\begin{equation}
\begin{aligned}
	\enspace &X_p := \{\phi_p \in \mathcal{H}^1(D_p) \vert \; \phi_p = 0 \: \text{ on } \: \Gamma_p\},\\
	\enspace &X_f := \{u_f \in \bm{\mathcal{H}}^1(D_f) \vert \;u_f = 0 \: \text{ on } \: \Gamma_f\},\\
	\enspace &Q_f := \{p_f \in \mathcal{L}^2(D_f)\}.
	\nonumber
\end{aligned}
\end{equation}

For notational convenience, we define $\underline{\bm{X}} := X_p \times X_f$ and $\underline{\bm{u}} = \left( \phi_p,\bm{u}_f\right) $, where $\phi_p \in X_p, \bm{u}_f \in X_f$. The norm for $\underline{\bm{X}}$ is defined as

\begin{equation}
\Vert \underline{\bm{u}} \Vert_{\underline{\bm{X}}} = \left( \Vert \phi_p \Vert_{\mathcal{H}^1(D_p)}^2  +  \Vert \bm{u}_f \Vert_{\mathcal{H}^1(D_f)}^2\right) ^{1/2}.
\nonumber
\end{equation}

Then the weak formulation \cite{yang2022multigrid} of the stochastic Stokes-Darcy coupled model is to find $\left( \underline{\bm{u}},p_f\right) \in \underline{\bm{X}} \times Q_f$ such that

\begin{equation} \label{eq3.6}
\left\{
\begin{aligned}
	& A(\underline{\bm{u}}, \underline{\bm{v}}) - B(\underline{\bm{v}},p_f) = F(\underline{\bm{v}}), \quad \quad \,\forall \underline{\bm{v}} = \left( \psi_p,\bm{v}_f\right) \in \underline{\bm{X}},\\
	& B(\underline{\bm{u}},q_f) = 0,\qquad\qquad\qquad\; \quad \forall q_f \in Q_f,
\end{aligned}
\right.
\end{equation}

\noindent where
\begin{subequations}
\begin{align}
	& A(\underline{\bm{u}}, \underline{\bm{v}}) = \mathbb{E}\left[ a(\underline{\bm{u}}, \underline{\bm{v}}) \right] = \int_\Omega  a(\underline{\bm{u}}, \underline{\bm{v}}) \mathrm{d} \omega, \label{ZZZa}\\
	& a(\underline{\bm{u}}, \underline{\bm{v}}) = g \int_{D_p} \left(\mathbb{K} \nabla \phi_p \right) \cdot \nabla \psi_p \mathrm{d}x +  \int_{D_f}2 \nu \mathbb{D}(\bm{u}_f): \mathbb{D}(\bm{v}_f) \mathrm{d}x \nonumber\\
	& \qquad \quad \: \,+ g \int_{\Gamma_I} \phi_p  \bm{v}_f \cdot \bm{n}_f \mathrm{d}x + \int_{\Gamma_I} \frac{\alpha \nu \sqrt{d}}{\sqrt{\text{tr}(\Pi)}} P_{\tau}  \left(\mathbb{K} \nabla \phi_p \right) \cdot \bm{v}_f \mathrm{d}{\Gamma_I}\label{ZZZb}\\
	& \qquad \quad \: \,- g \int_{\Gamma_I} \left( \bm{u}_f \cdot \bm{n}_f \right) \psi_p \mathrm{d}x + \int_{\Gamma_I} \frac{\alpha \nu \sqrt{d}}{\sqrt{\text{tr}(\Pi)}} P_{\tau}  \left(\bm{u}_f \right) \cdot \bm{v}_f \mathrm{d}{\Gamma_I},\nonumber\\
	& B(\underline{\bm{v}},p_f) = \mathbb{E}\left[ b(\underline{\bm{v}},p_f) \right] = \int_\Omega  b(\underline{\bm{v}},p_f) \mathrm{d} \omega, \label{ZZZc}\\
	& b(\underline{\bm{v}},p_f) = \int_{D_f}  p_f \nabla \cdot \bm{v}_f \mathrm{d} x, \label{ZZZd}\\
	& F(\underline{\bm{v}}) = \mathbb{E}\left[ f(\underline{\bm{v}}) \right] = \int_\Omega  f(\underline{\bm{v}}) \mathrm{d} \omega, \label{ZZZe}\\
	& f(\underline{\bm{v}}) = g \int_{D_p}  f_p \psi_p \mathrm{d} x + \int_{D_f}  \bm{f}_f \cdot \bm{v}_f \mathrm{d} x + \int_{\Gamma_I} gz \bm{v}_f \cdot \bm{n}_f \mathrm{d}{\Gamma_I}. \label{ZZZf}
\end{align}
\end{subequations}
Here $P_{\tau}  \left(\cdot \right)$ denotes the orthogonal projection onto the local tangential plane of the interface $\Gamma_I$, which is defined as $P_{\tau}  \left(\bm{u} \right) = \bm{u} - \left( \bm{u} \cdot \bm{n}_f\right) \bm{n}_f$.

To ensure the isotropy of the hydraulic conductivity, we assume it to be a diagonal matrix as $\mathbb{K}(\omega,x) = K(\omega,x)\mathbb{I} = \text{diag}\left( K_{11}(\omega,x),\cdots,K_{dd}(\omega,x)\right), \omega \in \Omega, x \in \bar{D}_p = D_p \cup \partial D_p, d = 2,3$, and satisfy the following assumption.

\begin{assumption} [Strong elliptic condition \cite{pao2012nonlinear}] \label{ass3.1}

There exist constants $K_{\min}$ and $K_{\max}$ such that the stochastic hydraulic conductivity satisfies the following uniform ellipticity condition,

\begin{equation}
	0 \, < \, K_{\min} \leq \{K_{ii}( \omega,x)\}_{i=1}^d \leq K_{\max} \, < \, \infty, \quad a.e. \enspace (\omega,x) \enspace in \enspace (\Omega, \bar{D}_p).
	\nonumber
\end{equation}

\end{assumption}

Then we recall the well-posedness of the weak solution for the stochastic coupled problem.

\begin{lemma} [Well-Posedness of the weak solution \cite{yang2022multigrid}] \label{le3.1}
Let Assumption \ref{ass3.1} hold. If the coefficient $\alpha$ in the Beavers–Joseph condition (\ref{ZZc}) is sufficiently small, then the weak formulation (\ref{eq3.6}) of the stochastic Stoke-Darcy interface problem (\ref{eq3.2})-(\ref{eq3.5}) admits a unique and bounded weak solution $\left( \underline{\bm{u}} ,p_f\right) \in \underline{\bm{X}} \times Q_f$ up to an additive constant.
\end{lemma}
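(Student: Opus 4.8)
The plan is to regard (\ref{eq3.6}) as a generalized saddle-point problem on the Hilbert spaces $\underline{\bm X}\times Q_f$ and invoke the Babu\v{s}ka--Brezzi theory. Since $A=\mathbb E[a]$, $B=\mathbb E[b]$, $F=\mathbb E[f]$ and Assumption \ref{ass3.1} provides the $\omega$-uniform bounds $0<K_{\min}\le K(\omega,\cdot)\le K_{\max}<\infty$ — hence $\mathrm{tr}(\Pi(\omega,\cdot))\ge dK_{\min}\nu/g>0$ and $\tfrac{\alpha\nu\sqrt d}{\sqrt{\mathrm{tr}(\Pi)}}\le\alpha\sqrt{\nu g/K_{\min}}$ — every continuity, coercivity and inf-sup constant for $A$ and $B$ follows from the corresponding pathwise estimate for $a$ and $b$ by Cauchy--Schwarz (resp. Jensen) in $\Omega$. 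Accordingly I would establish, for the deterministic forms: (i) boundedness of $a$, $b$, $f$; (ii) ellipticity of $a$ on the kernel $\mathbb V=\{\underline{\bm v}\in\underline{\bm X}:b(\underline{\bm v},q_f)=0\ \forall q_f\in Q_f\}$; (iii) the inf-sup condition for $b$. The Babu\v{s}ka--Brezzi theorem then yields a unique $(\underline{\bm u},p_f)$ together with the a priori bound, with $p_f$ determined up to a constant — the ``additive constant'' of the statement.

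Step (i) is a routine term-by-term application of Cauchy--Schwarz combined with the trace inequality $\|\cdot\|_{L^2(\Gamma_I)}\le C\|\cdot\|_{H^1}$ on $D_p$ and $D_f$, $|\mathbb K|\le K_{\max}$, and the coefficient bound above. The only delicate term is the Beavers--Joseph contribution $\int_{\Gamma_I}\tfrac{\alpha\nu\sqrt d}{\sqrt{\mathrm{tr}(\Pi)}}P_\tau(\mathbb K\nabla\phi_p)\cdot\bm v_f\,d\Gamma_I$, because $\nabla\phi_p|_{\Gamma_I}$ is not controlled by $\|\phi_p\|_{H^1(D_p)}$ directly. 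To handle it I would use the isotropy $\mathbb K=K\mathbb I$ to identify the tangential trace $P_\tau(\mathbb K\nabla\phi_p)$ on $\Gamma_I$ with $K$ times the surface gradient of $\phi_p|_{\Gamma_I}\in H^{1/2}(\Gamma_I)$, integrate by parts along the closed surface $\Gamma_I$, and estimate the result by the $H^{1/2}(\Gamma_I)$--$H^{-1/2}(\Gamma_I)$ duality pairing, using $\|\phi_p\|_{H^{1/2}(\Gamma_I)}\le C\|\phi_p\|_{H^1(D_p)}$, $\|P_\tau\bm v_f\|_{H^{1/2}(\Gamma_I)}\le C\|\bm v_f\|_{H^1(D_f)}$ and the smoothness of $\partial D_p,\partial D_f$ assumed in Section \ref{sec3} (a direct trace/interpolation estimate is an equivalent route). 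This gives $|\text{BJ term}|\le C\alpha\|\phi_p\|_{H^1(D_p)}\|\bm v_f\|_{H^1(D_f)}$.

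For (ii) I would test with $\underline{\bm v}=\underline{\bm u}$. The antisymmetric pair $g\int_{\Gamma_I}\phi_p\,\bm u_f\cdot\bm n_f$ and $-g\int_{\Gamma_I}(\bm u_f\cdot\bm n_f)\phi_p$ cancels; the Darcy term satisfies $g\int_{D_p}(\mathbb K\nabla\phi_p)\cdot\nabla\phi_p\ge gK_{\min}\|\nabla\phi_p\|^2\ge c\|\phi_p\|_{H^1(D_p)}^2$ by the Poincar\'e inequality ($\phi_p=0$ on $\Gamma_p$); the viscous term satisfies $\int_{D_f}2\nu|\mathbb D(\bm u_f)|^2\ge c\nu\|\bm u_f\|_{H^1(D_f)}^2$ by Korn's inequality ($\bm u_f=0$ on $\Gamma_f$); and the Beavers--Joseph self-term $\int_{\Gamma_I}\tfrac{\alpha\nu\sqrt d}{\sqrt{\mathrm{tr}(\Pi)}}|P_\tau\bm u_f|^2\ge0$. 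The remaining Beavers--Joseph cross-term is, by step (i), at most $\tfrac{C\alpha}{2}\big(\|\phi_p\|_{H^1(D_p)}^2+\|\bm u_f\|_{H^1(D_f)}^2\big)$, so for $\alpha$ small enough relative to $gK_{\min}$, $\nu$ and the Poincar\'e/Korn constants the total is strictly positive, giving $a(\underline{\bm u},\underline{\bm u})\ge\gamma\|\underline{\bm u}\|_{\underline{\bm X}}^2$ on all of $\underline{\bm X}$ (in particular on $\mathbb V$). For (iii), $b$ depends only on $\bm v_f$ on $D_f$ with $\bm v_f=0$ on $\Gamma_f$; pairing $q_f$ with a Bogovskii right-inverse of the divergence having homogeneous boundary data (which vanishes on $\Gamma_f$, hence lies in $X_f$, and is $\omega$-independent, so no measurability issue arises) and taking $\phi_p\equiv0$ yields $\sup_{\underline{\bm v}\in\underline{\bm X}}b(\underline{\bm v},q_f)/\|\underline{\bm v}\|_{\underline{\bm X}}\ge\beta\|q_f\|$ for all $q_f\in\mathcal{L}^2(D_f)$ of zero spatial mean — the classical Stokes inf-sup and the source of the pressure constant.

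I expect step (ii) to be the crux: first giving a meaning to $P_\tau(\mathbb K\nabla\phi_p)$ on $\Gamma_I$ within the energy space, and then, more essentially, absorbing the indefinite Beavers--Joseph cross-term into the coercive Darcy and viscous contributions. This absorption is precisely what forces the hypothesis that $\alpha$ be sufficiently small; for large $\alpha$ the interface coupling can overwhelm the coercive terms and the argument breaks down.
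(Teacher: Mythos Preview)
The paper does not supply a proof of Lemma \ref{le3.1}; it is stated with a citation to \cite{yang2022multigrid} and used as a black box. Your outline---continuity, $\underline{\bm X}$-coercivity of $a$ via cancellation of the skew terms, Poincar\'e/Korn, absorption of the Beavers--Joseph cross term by smallness of $\alpha$, and the Stokes inf-sup for $b$---is the standard Babu\v{s}ka--Brezzi route and matches the argument in the cited reference and in the original deterministic analyses (e.g.\ \cite{YCao_MGunzburger_XMHe_XWang_1,YCao_MGunzburger_XHu_FHua_XWang_WZhao_1}), lifted to $L^2(\Omega;\cdot)$ exactly as you describe.

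Two small corrections. First, $\Gamma_I$ is not a closed surface in general (in the paper's 2D setting it is an interval); your tangential integration by parts produces boundary terms on $\partial\Gamma_I$, and these vanish not because $\Gamma_I$ is closed but because $\partial\Gamma_I\subset\overline{\Gamma_f}$ and $\bm v_f=0$ on $\Gamma_f$. Second, your integration-by-parts step also differentiates the variable coefficient $K\,\alpha\nu\sqrt d/\sqrt{\mathrm{tr}(\Pi)}$ along $\Gamma_I$; Assumption \ref{ass3.1} only gives $L^\infty$ bounds on $K$, so you implicitly need some tangential regularity of $K$ on $\Gamma_I$ (this is available in the paper's setting via the smooth KL eigenfunctions and the smoothness of $\partial D_p$, but is worth stating). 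With these adjustments your sketch is sound.
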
 

\subsection{Realizations of the random hydraulic conductivity}\label{subsec3.3}

Due to the randomness in hydraulic conductivity, how to approximate the random field $\mathbb{K}(\omega,x)$ properly becomes a key problem in uncertainty quantification for the interface problem. As stated previously, the random hydraulic conductivity is assumed to be in the form of  $\mathbb{K}(\omega,x) = \text{diag}\left( K_{11} \\ (\omega,x),\cdots, K_{dd}(\omega,x)\right), \omega \in \Omega, x \in \bar{D}_p, d = 2,3$. The process of generating the realizations of $K(\omega,x) = K_{11}(\omega,x) $ is displayed in the following, and $K_{ii}( \omega,x),i=2,3$ are obtained in the same manner.

In realistic industrial applications, statistical learning, including linear regressions and Bayesian ridge regressions, is extensively applied in predicting the permeability in porous media regions \cite{roding2020predicting}. Therefore, based on the idea of regression analysis, the random hydraulic conductivity is set to have the following perturbation form that is in line with the practical need:

\begin{equation} \label{eq3.8}
K(\omega,x) = \bar{K}(x) + \widetilde{K}(\omega,x).
\end{equation}

Specifically, the deterministic term $\bar{K}(x)$ is the expectation of the random field $K(\omega,x)$. And without loss of generality, the stochastic term $\widetilde{K}(\omega,x)$ is a finite-dimensional Gaussian random parameter with zero mean. One of the most commonly used examples following the formulation in (\ref{eq3.8}) is the famous Karhunen-Loe\.{v}e (KL) expansion, and Mercer’s theorem guarantees its uniform convergence \cite{gunzburger2014stochastic}. Consequently, this work employs the truncated KL expansion to represent the random hydraulic conductivity $\mathbb{K}(\omega,x)$.

Given the expectation

$$\bar{K}(x) := \mathbb{E}\left[ K(\omega,x)\right],  \quad \text{for each } x \in \bar{D}_p,$$

\noindent and the covariance function 

$$\text{Cov}(x,x^{\prime}) :=\mathbb{E}\left[ \left( K(\omega,x) - \bar{K}(x)\right) \left( K(\omega,x^{\prime}) - \bar{K}(x^{\prime})\right) \right],  \quad \text{for each pair } x,x^{\prime} \in \bar{D}_p,$$

\noindent of the random field $K(\omega,x)$, we consider the following eigenvalue problem

\begin{equation} 
\int_\Omega \text{Cov}(x,x^{\prime}) r(x^{\prime}) \mathrm{d}x^{\prime} = \lambda r(x).
\nonumber
\end{equation}

Owing to the symmetry and positive definiteness of the covariance function, the eigenvalues $\{\lambda_t\}_{t=1}^\infty$ are real and strictly positive. The eigenfunctions $\{r_t(x)\}_{t=1}^\infty$ form a complete orthonormal set in $\mathcal{L}^2(D)$ and satisfy the orthonormality condition as follows

\begin{equation}
\int_\Omega r_t(x) r_{t^{\prime}}(x) \mathrm{d}x = \delta_{t t^{\prime}},
\nonumber
\end{equation}

\noindent where $\delta_{t t^{\prime}}$ is the Kronecker delta. In addition, the eigenvalues are all positive for the positivity of $\text{Cov}(\cdot,\cdot)$. Then, without loss of generality, we may order them in the non-increasing order $\lambda_1 \ge \lambda_2 \ge \cdots \ge 0$. Define

\begin{equation}
Y_t(\omega) = \frac{1}{\sqrt{\lambda_t}} \int_\Omega \left( K(\omega,x) - \bar{K}(x) \right)  r_t(x) \mathrm{d}x.
\nonumber
\end{equation}

\noindent Here $\{Y_t(\omega)\}_{t=1}^\infty$ is a set of mutually uncorrelated real-valued random variables with zero mean and unit variance, i.e., $\mathbb{E}\left[ Y_t(\omega)\right] =0,\mathbb{E}\left[ Y_t(\omega) Y_{t^{\prime}}(\omega)\right] =\delta_{t t^{\prime}}$. 

Therefore, we obtain the following KL expansion \cite{ghanem2003stochastic}

\begin{equation}\label{eq3.9} 
K(\omega,x) = \bar{K}(x) + \displaystyle \sum_{t=1}^{\infty} \sqrt{\lambda_t} r_t(x) Y_t(\omega),
\end{equation}

\noindent where the stochastic term $\widetilde{K}(\omega,x)$ is represented as an infinite-dimensional sum of Gaussian random variables. The availability of (\ref{eq3.9}) arises from the fact that the eigenvalues $\{\lambda_t\}_{t=1}^\infty$ decay as $t$ increases. The decay rate is intrinsically linked to the smoothness of the covariance function $ \text{Cov}(x,x^{\prime})$ and the correlation length \cite{frauenfelder2005finite, todor2005sparse}. Based on a prescribed tolerance $\epsilon$, we may retain only the first $T$ terms such that

\begin{equation}
\frac{\sum\limits_{t=T+1}^{\infty}\lambda_t}{\sum\limits_{t=1}^{\infty}\lambda_t} \le \epsilon, \quad \text{or equivalently, } \quad \frac{\sum\limits_{t=1}^{T}\lambda_t}{\sum\limits_{t=1}^{\infty}\lambda_t} \ge 1-\epsilon.
\nonumber
\end{equation}

Finally, the random field $K(\omega,x)$ is represented by the truncated KL expansion as below

\begin{equation} \label{eq3.10}
\begin{aligned}
	K(\omega,x) \approx K_T(\omega,x) &:= \bar{K}(x) + \widetilde{K}_T(\omega,x) \\
	&=\:\bar{K}(x) + \displaystyle \sum_{t=1}^{T} \sqrt{\lambda_t}\,r_t(x)\, Y_t(\omega),
\end{aligned}
\end{equation}

\noindent where $\{\lambda_t, r_t(x)\}_{t=1}^T$ are the dominant eigenvalues and the corresponding eigen-functions for the covariance kernel $ \text{Cov}(x,x^{\prime})$. Through the truncated KL expansion (\ref{eq3.10}), the random permeability $K(\omega,x)$ is approximated by a perturbation form, in which the deterministic term $\bar{K}(x)$ denotes the mean of $K(\omega,x)$ and the finite dimensional noise $ \widetilde{K}_T(\omega,x)$ depicts the influence of perturbations to the coefficient. 

Furthermore, Assumption \ref{ass3.1} restricts that the range of the random hydraulic conductivity $K(\omega,x)$ should be made finite at both ends of the interval. Therefore, in this work, we assume $\{Y_t(\omega)\}_{t=1}^T$ to denote a sequence of independent identically distributed (i.i.d.) random variables following the truncated standard normal distribution \cite{burkardt2014truncated}, which could preserve the main features of the previous standard normal distribution while avoiding extreme values.

\subsection{Monte Carlo finite element method} \label{subsec3.4}

In this subsection, we focus on the discrete formulation of the stochastic Stokes-Darcy coupled model. The Monte Carlo finite element method (MCFEM) \cite{gunzburger2014stochastic} is utilized in this work to alleviate the curse of dimensionality. Specifically, we approximate the integral $\mathbb{E}[\cdot]$ in  (\ref{eq2.5}) numerically by sample averages of realizations corresponding to the i.i.d. random inputs, which in our case refer to $\{Y_t(\omega)\}_{t=1}^T$ following the truncated standard normal distribution in (\ref{eq3.10}). For spatial discretization concerning $x \in D$, the standard finite element (FE) method is adopted. 

Let $U_h = span\{a_j\}_{j=1}^{N_1} \subset \mathcal{H}^1(D_p)$ be the finite element space for the hydraulic head in the Darcy domain, and $\bm{V}_h = span\{b_j\}_{j=1}^{N_2} \subset \bm{\mathcal{H}}^1(D_f)$, $W_h = span\{c_j\}_{j=1}^{N_3} \subset \mathcal{L}^2(D_f)$ be the finite element space for the velocity and pressure in the Stokes equation. Then the Galerkin formulation is to find $\underline{\bm{u}_h} = (\phi_p^h, \bm{u}_f^h) \in U_h \times \bm{V}_h$ and $p_f^h \in W_h$ such that

\begin{equation} \label{eq3.11}
\left\{
\begin{aligned}
	& A(\underline{\bm{u}_h}, \underline{\bm{v}_h}) - B(\underline{\bm{u}_h},p_f^h) = F(\underline{\bm{v}_h}), \quad \forall \underline{\bm{v}} \in U_{h0} \times \bm{V}_{h0},\\
	& B(\underline{\bm{u}_h},q_f^h) = 0,\qquad\qquad\qquad\; \quad \forall q_f^h \in W_h,
\end{aligned}
\right.
\end{equation}

\noindent where $U_{h0}, \bm{V}_{h0}$ consist of the functions of $U_{h}, \bm{V}_{h}$ with value $0$ on the Dirichlet boundary.

Let $\zeta(x) = \left\lbrace \{a_j\}_{j=1}^{N_1}, \{b_j\}_{j=1}^{N_2}, \{c_j\}_{j=1}^{N_3} \right\rbrace $ be the basis functions, then we have

\begin{equation} 
\phi_p^h (\omega,x) = \displaystyle \sum_{j=1}^{N_1} \phi_j(\omega) a_j(x),  \enspace \bm{u}_f^h(\omega,x)  = \begin{pmatrix} \sum\limits_{j=1}^{N_2} u_{1,j}(\omega) b_j(x) \\ \sum\limits_{j=1}^{N_2} u_{2,j}(\omega) b_j(x)	\end{pmatrix},  \enspace p_f^h (\omega,x) = \displaystyle \sum_{j=1}^{N_3} p_j(\omega) c_j(x).
\nonumber
\end{equation}

Taking the test functions $\underline{\bm{v}_h} = (a_i,0,0)^T\:(i =1,\cdots,N_1)$, $\underline{\bm{v}_h} = (0,b_i,0)^T\:(i =1,\cdots,N_2)$, $\underline{\bm{v}_h} = (0,0,b_i)^T\:(i =1,\cdots,N_2)$ and $q_f^h = c_i\:(i =1,\cdots,N_3)$ respectively, the Galerkin formulation in (\ref{eq3.11}) is expressed as

\begin{align*}
& \int_{D_p} K_T(\omega) \nabla \left(\displaystyle \sum_{j=1}^{N_1} \phi_j(\omega) a_j\right) \cdot \nabla a_i \mathrm{d}D_p - \int_{\Gamma_I} \left[\left(\displaystyle \sum_{j=1}^{N_2} u_{1,j}(\omega) b_j\right)n_1 + \left(\displaystyle \sum_{j=1}^{N_2} u_{2,j}(\omega) b_j\right)n_2 \right] b_i\mathrm{d}S  =  \int_{D_p} f_p a_i \mathrm{d}D_p,\\
& \int_{D_f} 2 \nu \left(\displaystyle \sum_{j=1}^{N_2} u_{1,j}(\omega) \frac{\partial b_j}{\partial x}\right) \frac{\partial b_i}{\partial x} \mathrm{d}D_f + \int_{D_f} \nu \left(\displaystyle \sum_{j=1}^{N_2} u_{1,j}(\omega)  \frac{\partial b_j}{\partial y}\right) \frac{\partial b_i}{\partial y} \mathrm{d}D_f + \int_{D_f} \nu \left(\displaystyle \sum_{j=1}^{N_2} u_{2,j}(\omega) \frac{\partial b_j}{\partial x}\right) \frac{\partial b_i}{\partial y} \mathrm{d}D_f \\ 
& \quad - \int_{D_f} \left(\displaystyle \sum_{j=1}^{N_3} p_{j}(\omega) c_j\right) \frac{\partial b_i}{\partial x} \mathrm{d}D_f + \int_{\Gamma_I} \delta \tau_1^2 \left(\displaystyle \sum_{j=1}^{N_2} u_{1,j}(\omega) b_j\right) b_i \mathrm{d}S + \int_{\Gamma_I} \delta \tau_1 \tau_2 \left(\displaystyle \sum_{j=1}^{N_2} u_{2,j}(\omega) b_j\right) b_i \mathrm{d}S\\
& \qquad \qquad \qquad \qquad \qquad \qquad\qquad + g \int_{\Gamma_I} n_1 \left(\displaystyle \sum_{j=1}^{N_1} \phi_{j}(\omega) a_j\right) b_i \mathrm{d}S = \int_{D_f} f_{s,1} b_i \mathrm{d}D_f + \int_{\Gamma_I} gzn_{1} b_i \mathrm{d}S,\\
& \int_{D_f} 2 \nu \left(\displaystyle \sum_{j=1}^{N_2} u_{2,j}(\omega) \frac{\partial b_j}{\partial y}\right) \frac{\partial b_i}{\partial y} \mathrm{d}D_f + \int_{D_f} \nu \left(\displaystyle \sum_{j=1}^{N_2} u_{1,j}(\omega)  \frac{\partial b_j}{\partial y}\right) \frac{\partial b_i}{\partial x} \mathrm{d}D_f + \int_{D_f} \nu \left(\displaystyle \sum_{j=1}^{N_2} u_{2,j}(\omega) \frac{\partial b_j}{\partial x}\right) \frac{\partial b_i}{\partial x} \mathrm{d}D_f \\
& \qquad - \int_{D_f} \left(\displaystyle \sum_{j=1}^{N_3} p_{j}(\omega) c_j\right) \frac{\partial b_i}{\partial y} \mathrm{d}D_f + \int_{\Gamma_I} \delta \tau_2 \tau_1 \left(\displaystyle \sum_{j=1}^{N_2} u_{1,j}(\omega) b_j\right) b_i \mathrm{d}S + \int_{\Gamma_I} \delta \tau_2^2 \left(\displaystyle \sum_{j=1}^{N_2} u_{2,j}(\omega) b_j\right) b_i \mathrm{d}S\\
& \qquad \qquad \qquad \qquad \qquad \qquad\qquad + g \int_{\Gamma_I} n_2 \left(\displaystyle \sum_{j=1}^{N_1} \phi_{j}(\omega) a_j\right) b_i \mathrm{d}S = \int_{D_f} f_{s,2} b_i \mathrm{d}D_f + \int_{\Gamma_I} gzn_{2} b_i \mathrm{d}S,\\
& \int_{D_f} \left(\displaystyle \sum_{j=1}^{N_2} u_{1,j}(\omega) \frac{\partial b_j}{\partial x}\right) c_i \mathrm{d}D_f + \int_{D_f} \left(\displaystyle \sum_{j=1}^{N_2} u_{2,j}(\omega) \frac{\partial b_j}{\partial y}\right) c_i \mathrm{d}D_f = 0.
\end{align*}

Given the MC realizations $K_T^m \: (m=1,\cdots,M)$ of the random conductivity $K_T(\omega)$ in the truncated KL expansion in (\ref{eq3.10}), (\ref{eq3.11}) is rewritten into the following $M$ linear systems of algebraic equations

\begin{equation} \label{eq3.12}
\mathbf{A}_m \bm{x}_h^m = \bm{b}, \quad m = 1,2,\cdots,M,
\end{equation}

\noindent and the QoI, that is the statistical characterization of the system response, is now estimated by

\begin{equation} \label{eq3.13}
\begin{aligned}
	\hat{\bm{x}}_{h,M}(x) := &\frac{1}{M} \displaystyle \sum_{m=1}^M \displaystyle \sum_{j=1}^N \bm{x}_{h}^m \:\zeta_j(x)\\
	= &\frac{1}{M} \displaystyle \sum_{m=1}^M \left[ \displaystyle \sum_{j=1}^{N_1} \phi_{h}^m a_j(x) + \displaystyle \sum_{j=1}^{N_2} \bm{u}_{1,h}^m b_j(x) + \displaystyle \sum_{j=1}^{N_2} \bm{u}_{2,h}^m b_j(x) + \displaystyle \sum_{j=1}^{N_3} \bm{p}_{h}^m c_j(x) \right],
\end{aligned}
\end{equation}

\noindent where $\{\zeta_j(x)\}_{j=1}^N, N := N_1+2N_2+N_3$ denotes the overall FE basis functions.

Specifically, the stiffness matrices $\mathbf{A}_m $,  the MCFEM sample solutions $\bm{x}_h^m $ and the load vector $\bm{b}$ of the linear systems  in (\ref{eq3.12}) are respectively given by

\begin{equation} \label{eq3.14}
\begin{aligned}
	&\mathbf{A}_m = 
	\begin{pmatrix}
		\mathbf{P}_m & -\mathbf{I1} & -\mathbf{I2} & \mathbf{O}_{N_1\times N_3}\\
		\mathbf{I3} + \mathbf{I9}_m + \mathbf{I11}_m & 2\mathbf{F1} + \mathbf{F2} + \mathbf{I5} & \mathbf{F3} + \mathbf{I7} & \mathbf{F5}\\
		\mathbf{I4} + \mathbf{I10}_m + \mathbf{I12}_m & \mathbf{F4} + \mathbf{I8} & \mathbf{F1} + 2\mathbf{F2} + \mathbf{I6} &  \mathbf{F6}\\
		\mathbf{O}_{N_3 \times N_1} & \mathbf{F5}^T  & \mathbf{F6}^T & \mathbf{O}_{N_3 \times N_3}
	\end{pmatrix},\\
	&\bm{x}_h^m= \left(\phi_h^m, \bm{u}_{1,h}^m,\bm{u}_{2,h}^m, \bm{p}_h^m\right) ^T, \bm{b} = \left(\bm{b1},\bm{b2},\bm{b3},\bm{O}_{1\times N_3}\right) ^T, \phi_h^m = \left( \phi_1^m, \cdots, \phi_{N_1}^m\right), \\
	& \bm{u}_{1,h}^m = \left( u_{1,1}^m, \cdots, u_{1,N_2}^m \right), \enspace\bm{u}_{2,h}^m = \left( u_{2,1}^m, \cdots, u_{2,N_2}^m \right), \enspace\bm{p}_h^m=\left( p_1^m,\cdots, p_{N_3}^m\right),
\end{aligned}
\end{equation}

\noindent and the sub-matrices and sub-vectors are defined as below

\begin{equation}\label{eq3.15}
\begin{aligned}
	& \mathbf{P}_m = \left[ \int_{D_p} K_T^m \nabla a_j \nabla a_i \mathrm{d} D_p\right] _{i,j=1}^{N_1}, \qquad \quad \mathbf{F1} = \left[ \int_{D_f} \nu \frac{\partial b_j}{\partial x} \frac{\partial b_i}{\partial x}\mathrm{d} D_f\right] _{i,j=1}^{N_2},\\ 
	& \mathbf{F2} = \left[ \int_{D_f} \nu \frac{\partial b_j}{\partial y} \frac{\partial b_i}{\partial y}\mathrm{d} D_f\right] _{i,j=1}^{N_2}, \qquad \quad \enspace \:\mathbf{F3} = \left[ \int_{D_f} \nu \frac{\partial b_j}{\partial x} \frac{\partial b_i}{\partial y}\mathrm{d} D_f\right] _{i,j=1}^{N_2},\\ 
	& \mathbf{F4} = \left[ \int_{D_f} \nu \frac{\partial b_j}{\partial y} \frac{\partial b_i}{\partial x}\mathrm{d} D_f\right] _{i,j=1}^{N_2}, \qquad \quad \enspace \: \mathbf{F5} = \left[- \int_{D_f} c_j \frac{\partial b_i}{\partial x}\mathrm{d} D_f\right] _{i,j=1}^{N_2,N_3},\\ 
	& \mathbf{F6} = \left[- \int_{D_f} c_j \frac{\partial b_i}{\partial y}\mathrm{d} D_f \right] _{i,j=1}^{N_2,N_3},\qquad \quad \enspace \enspace \mathbf{I1} =  \left[ \int_{\Gamma_I} n_1 b_j a_i \mathrm{d}S \right] _{i,j=1}^{N_1,N_2},\\ 
	& \mathbf{I2} =  \left[ \int_{\Gamma_I} n_2 b_j a_i \mathrm{d}S \right] _{i,j=1}^{N_1,N_2}, \qquad \quad \quad \quad \enspace  \mathbf{I3} =  \left[ \int_{\Gamma_I} g n_1 a_j b_i \mathrm{d}S \right] _{i,j=1}^{N_2,N_1},\\ 
	& \mathbf{I4} =  \left[ \int_{\Gamma_I} g n_2 a_j b_i \mathrm{d}S \right] _{i,j=1}^{N_2,N_1}, \qquad \quad \quad \quad  \mathbf{I5} =  \left[ \int_{\Gamma_I} \delta \tau_1^2 b_j b_i \mathrm{d}S \right] _{i,j=1}^{N_2},\\ 
	& \mathbf{I6} =  \left[ \int_{\Gamma_I} \delta \tau_2^2 b_j b_i \mathrm{d}S \right] _{i,j=1}^{N_2}, \qquad \quad \quad \quad \,\; \mathbf{I7} =  \left[ \int_{\Gamma_I} \delta \tau_1 \tau_2 b_j b_i \mathrm{d}S \right] _{i,j=1}^{N_2},\\ 
	& \mathbf{I8} =  \left[ \int_{\Gamma_I} \delta \tau_2 \tau_1 b_j b_i \mathrm{d}S \right] _{i,j=1}^{N_2}, \qquad \quad \quad \: \;\mathbf{I9}_m =  \left[ \int_{\Gamma_I} \delta \tau_1^2 K_T^m \frac{\partial a_j}{\partial x} b_i \mathrm{d}S \right] _{i,j=1}^{N_2,N_1},\\ 
	& \mathbf{I10}_m =  \left[ \int_{\Gamma_I} \delta \tau_2^2 K_T^m \frac{\partial a_j}{\partial x} b_i \mathrm{d}S \right] _{i,j=1}^{N_2,N_1},\enspace \quad \:\: \mathbf{I11}_m =  \left[ \int_{\Gamma_I} \delta \tau_1 \tau_2 K_T^m \frac{\partial a_j}{\partial x} b_i \mathrm{d}S \right] _{i,j=1}^{N_2,N_1},\\ 
	& \mathbf{I12}_m =  \left[ \int_{\Gamma_I} \delta \tau_2 \tau_1 K_T^m \frac{\partial a_j}{\partial x} b_i \mathrm{d}S \right] _{i,j=1}^{N_2,N_1}, \enspace\:\:\,\bm{b1} = \left[ \int_{D_p} f_p a_i \mathrm{d}D_p \right] _{i=1}^{N_1},\\ 
	& \bm{b2} \enspace = \enspace \left[ \int_{D_f} \:f_{f1} b_i \: \mathrm{d}D_f \right] _{i=1}^{N_2} \enspace + \enspace \left[ \int_{\Gamma_I}\:g z n_1 b_i \:\mathrm{d}S \right] _{i=1}^{N_2},\\ 
	& \bm{b3} \enspace = \enspace \left[ \int_{D_f} \:f_{f2} b_i \: \mathrm{d}D_f \right] _{i=1}^{N_2} \enspace + \enspace \left[ \int_{\Gamma_I}\:g z n_2 b_i \:\mathrm{d}S \right] _{i=1}^{N_2}.
\end{aligned}
\end{equation}

According to (\ref{eq3.14}) and (\ref{eq3.15}), the coupled Stokes-Darcy model has high degrees of freedom and we need to face a sequence of large-scale stiffness matrices $\{\mathbf{A}_m\}_{m=1}^M$ with the dimension of $N = N_1+2N_2+N_3$. In addition, the MC convergence $\mathcal{O}(\sqrt{\frac{1}{M}})$ is related to the sample size $M$, i.e., it requires a large number of samples to obtain a reliable QoI. Therefore, the dimensions in both the physical and stochastic space become very large, which results in heavy computational and storage burdens for the standard MCFEM method. Hence, it is in a great need to develop more efficient solvers for this complex system.

\section{Low-rank solver for the Stokes-Darcy coupled model with random hydraulic conductivity}\label{sec4}

In order to reduce temporal and spatial complexities, it is natural to explore dimensionality reduction strategies for dealing with high-dimensional stiffness matrices. This section presents a novel generalized low-rank matrix approximation technique, employs it to establish an efficient low-rank solver for the stochastic Stokes-Darcy coupled system, and carries out the error analysis.

\subsection{Generalized low-rank approximation of matrices} \label{subsec4.1}

The nature of low-rank matrix approximation is dimensionality reduction. The goal is to derive a more compact data representation that preserves as much intrinsic information as possible. Therefore, solving a given problem on a lower-rank alternative yields similar numerical accuracy to solving it on the original matrices, which effectively reduces our computational and storage burden. 

Mathematically, the optimal rank-$k$ approximation ($k \le N$) of a matrix $\mathbf{B} \in \mathbb{R}^{N \times N}$ under the Frobenius norm can be formulated as the following rank-constrained minimization problem: compute the optimal matrix $\mathbf{B}^* \in \mathbb{R}^{N \times N}$, which solves

\begin{equation} \label{eq4.1}
\mathop{\min}\limits_{\text{rank}(\mathbf{B}^*) \; = \; k} \enspace \Vert \mathbf{B} - \mathbf{B}^* \Vert_F,
\end{equation}

\noindent where $ \Vert \mathbf{B} - \mathbf{B}^* \Vert_F$ is called as the reconstruction error of the low-rank matrix approximation. 

The famous Eckart-Young-Mirsky theorem provides the theoretical guide for solving the optimization problem above, demonstrating that the optimal solution of (\ref{eq4.1}) can be given explicitly by the truncated singular value decomposition of the original matrix. 

\begin{theorem}[\cite{eckart1936approximation}]\label{th4.1} 

Let $\mathbf{B} = \mathbf{U} \Sigma \mathbf{V}^T \in \mathbb{R}^{N \times N}$ denote the singular value decomposition of $\mathbf{B}$. Then the optimal rank-$k$ approximation of $\mathbf{B}$ under the Frobenius norm is given by the truncated SVD as follows:
$$\mathbf{B}^* \:= \:\mathbf{U}_k \Sigma_k \mathbf{V}_k^T,$$

\noindent where $\mathbf{U}_k, \mathbf{V}_k \in \mathbb{R}^{N \times k}$ contain the first $k$ left and right singular vectors of $\mathbf{B}$, and $\Sigma_k \in \mathbb{R}^{k \times k}$ is a diagonal matrix of the corresponding largest $k$ singular values. Moreover, $\mathbf{B}^*$ is the unique minimizer if and only if the singular values $\{\sigma_i\}_{i=1}^N$ of $\mathbf{B}$ follow $\sigma_{k+1} \neq \sigma_k$.

\end{theorem}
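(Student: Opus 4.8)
The plan is to exploit the orthogonal invariance of the Frobenius norm to reduce the minimization in (\ref{eq4.1}) to a statement about singular values, prove a lower bound $\Vert \mathbf{B}-\mathbf{A}\Vert_F^2 \ge \sum_{i=k+1}^N \sigma_i^2$ valid for \emph{every} matrix $\mathbf{A}$ of rank at most $k$, verify that the truncated SVD $\mathbf{B}^*$ attains this bound, and finally analyze the equality case to obtain the uniqueness statement.

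For the lower bound I would first establish the singular-value estimate $\sigma_{i+k}(\mathbf{B}) \le \sigma_i(\mathbf{B}-\mathbf{A})$ for $1 \le i \le N-k$ whenever $\mathrm{rank}(\mathbf{A}) \le k$. This follows by writing $\mathbf{B} = (\mathbf{B}-\mathbf{A}) + \mathbf{A}$ and invoking Weyl's subadditivity inequality for singular values, $\sigma_{p+q-1}(\mathbf{X}+\mathbf{Y}) \le \sigma_p(\mathbf{X}) + \sigma_q(\mathbf{Y})$, with $\mathbf{X} = \mathbf{B}-\mathbf{A}$, $\mathbf{Y} = \mathbf{A}$, $p = i$ and $q = k+1$, together with $\sigma_{k+1}(\mathbf{A}) = 0$. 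Squaring and summing then yields
\[
\Vert \mathbf{B}-\mathbf{A}\Vert_F^2 = \sum_{i=1}^N \sigma_i(\mathbf{B}-\mathbf{A})^2 \ge \sum_{i=1}^{N-k} \sigma_i(\mathbf{B}-\mathbf{A})^2 \ge \sum_{i=1}^{N-k}\sigma_{i+k}(\mathbf{B})^2 = \sum_{i=k+1}^N \sigma_i^2 .
\]
Since $\mathbf{B}-\mathbf{B}^* = \mathbf{U}(\Sigma - \widehat{\Sigma}_k)\mathbf{V}^T$, where $\widehat{\Sigma}_k$ is $\Sigma_k$ padded by zeros, orthogonal invariance gives $\Vert \mathbf{B}-\mathbf{B}^*\Vert_F^2 = \sum_{i=k+1}^N \sigma_i^2$, so $\mathbf{B}^*$ is a minimizer.

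For uniqueness, given any rank-$k$ minimizer $\mathbf{A}$ with column space $S$ ($\dim S \le k$), I would note that among all matrices whose column space is contained in $S$ the Frobenius error is minimized by $\mathbf{P}_S\mathbf{B}$, the columnwise orthogonal projection of $\mathbf{B}$ onto $S$; hence at the optimum $\mathbf{A} = \mathbf{P}_S\mathbf{B}$ and $\Vert \mathbf{B}-\mathbf{A}\Vert_F^2 = \Vert \mathbf{B}\Vert_F^2 - \mathrm{tr}(\mathbf{P}_S\mathbf{B}\mathbf{B}^T)$. Minimizing the error is therefore equivalent to maximizing $\mathrm{tr}(\mathbf{P}_S\mathbf{B}\mathbf{B}^T)$ over subspaces with $\dim S \le k$, and by the Ky Fan / Courant--Fischer maximum principle the maximum equals $\sum_{i=1}^k \sigma_i^2$, attained precisely when $S$ is the span of the leading $k$ left singular vectors. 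When $\sigma_k \ne \sigma_{k+1}$ — equivalently $\sigma_k^2 > \sigma_{k+1}^2$ under the non-increasing ordering — this dominant $k$-dimensional invariant subspace of $\mathbf{B}\mathbf{B}^T$ is unique, forcing $\mathbf{A} = \mathbf{B}^*$. For the converse, if $\sigma_k = \sigma_{k+1}$ I would exhibit a distinct minimizer by replacing the rank-one term $\sigma_k \mathbf{u}_k\mathbf{v}_k^T$ in the truncated expansion with $\sigma_{k+1}\mathbf{u}_{k+1}\mathbf{v}_{k+1}^T$, which alters the matrix but leaves the error equal to $\sum_{i=k+1}^N \sigma_i^2$.

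I expect the main obstacle to be the uniqueness half rather than the optimality half: the lower bound and attainment are routine once Weyl's inequality is available, whereas uniqueness requires carefully reducing an arbitrary optimal $\mathbf{A}$ to projection form, correctly quoting the trace-maximization principle together with its uniqueness under a spectral gap, and separately handling the converse so as to obtain the stated ``if and only if''. Particular care is needed to confirm that the hypothesis $\sigma_{k+1} \ne \sigma_k$ is exactly the spectral-gap condition making the dominant eigenspace of $\mathbf{B}\mathbf{B}^T$ well defined.
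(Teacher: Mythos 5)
The paper does not prove Theorem \ref{th4.1} at all: it is quoted verbatim as the classical Eckart--Young--Mirsky theorem with a citation to \cite{eckart1936approximation}, so there is no in-paper argument to compare against. Your proposal is a correct, standard, self-contained proof. The optimality half is sound: Weyl's subadditivity inequality for singular values with $p=i$, $q=k+1$ and $\sigma_{k+1}(\mathbf{A})=0$ gives $\sigma_{i+k}(\mathbf{B})\le\sigma_i(\mathbf{B}-\mathbf{A})$, the squared sum gives the lower bound $\sum_{i=k+1}^N\sigma_i^2$, and orthogonal invariance shows the truncated SVD attains it. The uniqueness half is also correctly structured, and it is worth noting that your reduction to maximizing $\mathrm{tr}(\mathbf{P}_S\mathbf{B}\mathbf{B}^T)$ over $k$-dimensional subspaces is essentially the same trace-maximization principle that the paper itself develops (via the Schur--Horn theorem) in Lemma \ref{le4.1} and Lemma \ref{le4.2} to prove Theorem \ref{th4.3}; so your route is consistent in spirit with the machinery the authors use elsewhere, even though they do not apply it to Theorem \ref{th4.1}.

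Two small points deserve care if you write this out in full. First, in the uniqueness direction you should justify that an optimal $\mathbf{A}$ must have $\dim S = k$ exactly (not $<k$) before concluding that $S$ is the leading $k$-dimensional eigenspace of $\mathbf{B}\mathbf{B}^T$; this holds whenever $\sigma_k>0$, since a lower-dimensional $S$ can capture at most $\sum_{i=1}^{\dim S}\sigma_i^2<\sum_{i=1}^k\sigma_i^2$. Second, your converse construction (swapping $\sigma_k\mathbf{u}_k\mathbf{v}_k^T$ for $\sigma_{k+1}\mathbf{u}_{k+1}\mathbf{v}_{k+1}^T$) produces a genuinely different minimizer only when $\sigma_k=\sigma_{k+1}>0$; in the degenerate case $\sigma_k=\sigma_{k+1}=0$ the minimizer is in fact unique (it equals $\mathbf{B}$), so the ``only if'' direction of the theorem as literally stated fails there. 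That is an imprecision in the standard statement rather than in your argument, but you should either exclude that case or flag it.
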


Despite its appealing precision in data reconstruction, SVD is only well-suited for dealing with a single matrix, and when applied to a collection of high-dimensional matrices, it has to face practical limits arising from expensive computational costs and heavy memory demands. As a result, a "generalized" idea has been introduced to low-rank matrix approximation \cite{ye2004generalized}, which aims to find a generalized tri-decomposition form for a sequence of matrices $\{\mathbf{B}_m\}_{m=1}^M$ as below

\begin{equation} \label{eq4.2}
\mathop{\min}\limits_{\mathbf{U}^T \mathbf{U} = \mathbf{I}_k , \mathbf{V}^T \mathbf{V} = \mathbf{I}_k} \quad \displaystyle \sum_{m=1}^M \Vert \mathbf{B}_m - \mathbf{U} \mathbf{S}_m \mathbf{V}^T \Vert_F^2,
\end{equation}

\noindent where $\mathbf{U}, \mathbf{V} \in \mathbb{R}^{N \times k}$ are the fixed two-sided matrices with orthonormal columns that are shared by the set $\{\mathbf{B}_m\}_{m=1}^M$. 

The generalized low-rank approximation of matrices could effectively alleviate the high SVD calculation and memory burden. Therefore, we would like to utilize GLRAM techniques to solve the stochastic Stokes-Darcy system and further reduce the computational and space complexities from the linear systems in (\ref{eq3.12}). However, there are still several difficulties in combining the current GLRAM technology with solving SPDEs. On one hand, it has relatively expensive computational complexity due to its iterative structure \cite{ye2004generalized}. On the other hand, the standard GLRAM is generally employed in the fields of computer vision and signal processing, and it is not straightforward to achieve the matrix reconstruction accuracy pursued in numerical PDE settings. 

Therefore, this work introduces a novel GLRAM strategy that achieves a more efficient balance among dimensionality reduction efficacy, computational efficiency, and numerical precision. The proposed technique demonstrates strong performance in solving the stochastic Stokes-Darcy interface model. In contrast to the conventional SVD and GLRAM frameworks, our approach utilizes a different low-rank matrix representation, which allows better data compression capability and higher fidelity in matrix reconstruction.

Given a pre-specified data compression ratio $\theta \in [0,1]$, the reduced dimension $k$ is determined as below,

\begin{equation}
k \: = \: \lceil \theta N \rceil \: \in \: [0,N],
\nonumber
\end{equation}

\noindent and then the optimal generalized rank-$k$ matrix approximation of $\{\mathbf{B}_m\}_{m=1}^M \in \mathbb{R}^{ N\times N}$ is constructed by finding the matrices $\mathbf{U}, \{\mathbf{V}_m\}_{m=1}^M \in \mathbb{R}^{N \times k}$ such that

\begin{equation} \label{eq4.3}
\mathop{\min}\limits_{ \substack{\mathbf{U},\{\mathbf{V}_m\}_{m=1}^M \in \mathbb{R}^{N \times k} \\ \mathbf{U}^T \mathbf{U} = \mathbf{I}_k}} \quad  \displaystyle \sum_{m=1}^M \Vert \mathbf{B}_m - \mathbf{U} \mathbf{V}_m^T \Vert_F^2.
\end{equation}

To construct our method to solve the minimization problem in (\ref{eq4.3}), we first look at the relationship between the semi-orthogonal matrices $\mathbf{U}$ and $\mathbf{V}_m$.

\begin{theorem}\label{th4.2}

Let $\mathbf{U}$ and $\{\mathbf{V}_m\}_{m=1}^M$ be the optimal solution to the minimization problem in (\ref{eq4.3}), then     
$$\mathbf{V}_m \:= \:\mathbf{B}_m^T \mathbf{U}, \quad m = 1,2,\cdots,M.$$

\end{theorem}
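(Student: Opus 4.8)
The plan is to treat the minimization in (\ref{eq4.3}) as an unconstrained quadratic problem in the variables $\{\mathbf{V}_m\}_{m=1}^M$ once $\mathbf{U}$ is held fixed, and to exploit the fact that the objective decouples across the index $m$. Writing
\begin{equation}
\Phi(\mathbf{U},\{\mathbf{V}_m\}) \:=\: \sum_{m=1}^M \Vert \mathbf{B}_m - \mathbf{U}\mathbf{V}_m^T \Vert_F^2 \:=\: \sum_{m=1}^M \Phi_m(\mathbf{U},\mathbf{V}_m),
\nonumber
\end{equation}
I would observe that the optimal $\mathbf{V}_m$ must minimize each summand $\Phi_m$ separately, since $\mathbf{V}_m$ appears in no other term and the constraint $\mathbf{U}^T\mathbf{U}=\mathbf{I}_k$ involves only $\mathbf{U}$. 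So the task reduces to: for fixed $\mathbf{U}$ with orthonormal columns, minimize $\Vert \mathbf{B}_m - \mathbf{U}\mathbf{V}_m^T\Vert_F^2$ over $\mathbf{V}_m \in \mathbb{R}^{N\times k}$.

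First I would expand the Frobenius norm using $\Vert\mathbf{X}\Vert_F^2 = \operatorname{tr}(\mathbf{X}^T\mathbf{X})$, giving
\begin{equation}
\Phi_m \:=\: \operatorname{tr}(\mathbf{B}_m^T\mathbf{B}_m) - 2\operatorname{tr}(\mathbf{B}_m^T\mathbf{U}\mathbf{V}_m^T) + \operatorname{tr}(\mathbf{V}_m\mathbf{U}^T\mathbf{U}\mathbf{V}_m^T).
\nonumber
\end{equation}
Using $\mathbf{U}^T\mathbf{U}=\mathbf{I}_k$, the last term collapses to $\operatorname{tr}(\mathbf{V}_m\mathbf{V}_m^T)=\Vert\mathbf{V}_m\Vert_F^2$, and the cross term is $-2\operatorname{tr}(\mathbf{V}_m^T\mathbf{B}_m^T\mathbf{U})$. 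Thus $\Phi_m$ is a strictly convex quadratic in the entries of $\mathbf{V}_m$. I would then either complete the square — writing $\Phi_m = \Vert\mathbf{V}_m - \mathbf{B}_m^T\mathbf{U}\Vert_F^2 + \operatorname{tr}(\mathbf{B}_m^T\mathbf{B}_m) - \Vert\mathbf{B}_m^T\mathbf{U}\Vert_F^2$ — or set the gradient $\partial\Phi_m/\partial\mathbf{V}_m = -2\mathbf{B}_m^T\mathbf{U} + 2\mathbf{V}_m = 0$ to zero, both of which immediately yield the unique minimizer $\mathbf{V}_m = \mathbf{B}_m^T\mathbf{U}$. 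Since this holds for every $m$, the claim follows.

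Honestly, there is no serious obstacle here; the only thing to be careful about is the bookkeeping of which matrix multiplication goes on which side and making sure the constraint $\mathbf{U}^T\mathbf{U}=\mathbf{I}_k$ is what makes the quadratic term a clean identity (this is precisely where the orthonormality of the columns of $\mathbf{U}$ is used, and it is worth stating explicitly). One should also note that the value of the minimizing $\mathbf{V}_m$ given $\mathbf{U}$ is independent of the other blocks, which justifies the term-by-term minimization; and since $\mathbf{U}$ being part of the \emph{joint} optimal solution in particular means the $\mathbf{V}_m$ are optimal for that $\mathbf{U}$, the stated formula holds at the global optimum. The completion-of-squares form additionally prepares the ground for the next step of the development, since substituting $\mathbf{V}_m = \mathbf{B}_m^T\mathbf{U}$ back reduces (\ref{eq4.3}) to $\max_{\mathbf{U}^T\mathbf{U}=\mathbf{I}_k}\sum_m \Vert\mathbf{B}_m^T\mathbf{U}\Vert_F^2$, i.e. a trace-maximization problem solvable by an eigen-decomposition.
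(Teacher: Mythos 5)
Your proof is correct and follows essentially the same route as the paper: expand the Frobenius norm via the trace, use $\mathbf{U}^T\mathbf{U}=\mathbf{I}_k$ to reduce the quadratic term to $\operatorname{tr}(\mathbf{V}_m\mathbf{V}_m^T)$, and minimize the resulting quadratic in each $\mathbf{V}_m$ separately. If anything, your completion-of-squares step makes the final conclusion more explicit than the paper's brief appeal to ``standard properties of the matrix trace.''
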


\begin{proof}

By the symmetry of the Frobenius inner product, we have

\begin{equation} 
	\begin{split}
		\displaystyle \sum_{m=1}^M \: \Vert \mathbf{B}_m - \mathbf{U} \mathbf{V}_m^T \Vert_F^2 \: &= \: \displaystyle \sum_{m=1}^M \: \text{tr}\left( \left( \mathbf{B}_m - \mathbf{U} \mathbf{V}_m^T\right)  \left( \mathbf{B}_m - \mathbf{U} \mathbf{V}_m^T\right) ^T\right) \\
		&= \: \displaystyle \sum_{m=1}^M \: \text{tr}\left( \mathbf{B}_m \mathbf{B}_m^T\right) + \displaystyle \sum_{m=1}^M \: \text{tr}\left( \mathbf{V}_m \mathbf{V}_m^T\right) - 2 \displaystyle \sum_{m=1}^M \: \text{tr}\left( \mathbf{B}_m^T \mathbf{U} \mathbf{V}_m^T\right), 
		\nonumber
	\end{split}
\end{equation}

\noindent in which the second term $\text{tr}\left( \mathbf{V}_m \mathbf{V}_m^T\right) = \text{tr}\left( \left(\mathbf{U}\mathbf{V}_m^T\right)\left(\mathbf{U}\mathbf{V}_m^T\right)^T\right)$ holds, since $\mathbf{U}$ has orthonormal columns. 

It is easy to observe that the first term $\displaystyle \sum_{m=1}^M \: \text{tr}\left( \mathbf{B}_m \mathbf{B}_m^T\right)$ is a constant, and then (\ref{eq4.3}) is equivalent to the following optimization problem

\begin{equation} 
	\mathop{\min}\limits_{\mathbf{U}, \mathbf{V}_m} \: \displaystyle \sum_{m=1}^M \: \text{tr}\left( \mathbf{V}_m \mathbf{V}_m^T\right)  \: - \: 2 \displaystyle \sum_{m=1}^M \: \text{tr}\left( \mathbf{B}_m^T \mathbf{U} \mathbf{V}_m^T\right). 
	\nonumber
\end{equation}

\noindent According to standard properties of the matrix trace, the minimization problem above reaches the optimal value only if $\mathbf{V}_m = \mathbf{B}_m^T \mathbf{U}$ and $\mathbf{V}_m^T = \mathbf{U}^T \mathbf{B}_m$ holds for all $m = 1,2,\cdots, M$.

\end{proof}

Then the following lemma is introduced to establish our main result in the follow-up theorem.

\begin{lemma}\label{le4.1}
Let $\mathbf{R} \in \mathbb{R}^{N \times N}$ be a symmetric matrix and $\mathbf{I}_k$ denote the $k$-dimensional identity matrix $(k \leq N)$. Then the following optimization problem has an optimal solution $\mathbf{U} \in \mathbb{R}^{N \times k}$, whose columns are equal to the first $k$ dominant eigenvectors of $\mathbf{R}$,

\begin{equation}
	\begin{split}
		\max & \quad \text{tr}\left( \mathbf{U}^T \mathbf{R} \mathbf{U}\right) ,\\
		s.t. & \quad \mathbf{U}^T \mathbf{U} = \mathbf{I}_k.
		\nonumber
	\end{split}
\end{equation}

\end{lemma}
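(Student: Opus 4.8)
The plan is to reduce this matrix optimization to an elementary linear program over the ``energy distribution'' of $\mathbf{U}$ across the eigenspaces of $\mathbf{R}$. Since $\mathbf{R}$ is symmetric, first I would invoke the spectral theorem to write $\mathbf{R} = \mathbf{Q} \boldsymbol{\Lambda} \mathbf{Q}^T$, where $\mathbf{Q} \in \mathbb{R}^{N\times N}$ is orthogonal and $\boldsymbol{\Lambda} = \text{diag}(\lambda_1,\dots,\lambda_N)$ with the eigenvalues arranged in non-increasing order $\lambda_1 \ge \lambda_2 \ge \cdots \ge \lambda_N$. Setting $\mathbf{W} = \mathbf{Q}^T \mathbf{U} \in \mathbb{R}^{N \times k}$, the constraint $\mathbf{U}^T\mathbf{U} = \mathbf{I}_k$ is equivalent to $\mathbf{W}^T\mathbf{W} = \mathbf{I}_k$, and the objective transforms into $\text{tr}(\mathbf{U}^T\mathbf{R}\mathbf{U}) = \text{tr}(\mathbf{W}^T \boldsymbol{\Lambda}\mathbf{W}) = \sum_{i=1}^N \lambda_i c_i$, where $c_i := \sum_{j=1}^k W_{ij}^2$ is the squared Euclidean norm of the $i$-th row of $\mathbf{W}$.

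Next I would record the two constraints that the vector $(c_1,\dots,c_N)$ must satisfy. First, $\sum_{i=1}^N c_i = \text{tr}(\mathbf{W}\mathbf{W}^T) = \text{tr}(\mathbf{W}^T\mathbf{W}) = k$. Second, $0 \le c_i \le 1$ for every $i$: the lower bound is immediate, while the upper bound follows because $\mathbf{W}\mathbf{W}^T$ is an orthogonal projector (its columns being orthonormal), so its diagonal entries $c_i$ lie in $[0,1]$; equivalently, one may complete $\mathbf{W}$ to an $N\times N$ orthogonal matrix whose rows have unit norm, whence any partial sum of squared row entries is at most $1$. Consequently the original problem is bounded above by the value of the linear program $\max\{\sum_{i=1}^N \lambda_i c_i : 0\le c_i \le 1,\ \sum_{i=1}^N c_i = k\}$.

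Then I would solve this linear program: because the $\lambda_i$ are sorted in non-increasing order, the choice $c_1 = \cdots = c_k = 1$ and $c_{k+1} = \cdots = c_N = 0$ is optimal, with value $\sum_{i=1}^k \lambda_i$; this follows from a one-line exchange argument, since transferring mass from a coordinate with larger index to one with smaller index cannot decrease $\sum_i \lambda_i c_i$. Finally I would exhibit a feasible $\mathbf{U}$ attaining this bound: taking $\mathbf{W} = [\begin{smallmatrix}\mathbf{I}_k\\ \mathbf{0}\end{smallmatrix}]$ yields $c_i = 1$ for $i \le k$ and $c_i = 0$ otherwise, and the corresponding $\mathbf{U} = \mathbf{Q}\mathbf{W}$ consists precisely of the first $k$ columns of $\mathbf{Q}$, i.e., the $k$ dominant eigenvectors of $\mathbf{R}$. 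This $\mathbf{U}$ is feasible and achieves $\text{tr}(\mathbf{U}^T\mathbf{R}\mathbf{U}) = \sum_{i=1}^k\lambda_i$, so it is an optimal solution of the claimed form.

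I expect the only genuinely delicate point to be the bound $c_i \le 1$, that is, justifying that the diagonal of $\mathbf{W}\mathbf{W}^T$ lies in $[0,1]$; this is exactly where the orthonormality of the columns of $\mathbf{U}$ (equivalently of $\mathbf{W}$) is used, and without it the reduction to the linear program would fail. Everything else is bookkeeping with traces together with the elementary rearrangement step for the linear program. If one additionally wanted to characterize \emph{all} maximizers, one would need the spectral-gap condition $\lambda_k \ne \lambda_{k+1}$ to pin down the optimal $\mathbf{U}$ up to an orthogonal transformation of its columns, but the lemma as stated only asserts the existence of an optimizer whose columns are the first $k$ dominant eigenvectors of $\mathbf{R}$.
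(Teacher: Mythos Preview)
Your proof is correct and follows the same overall reduction as the paper: diagonalize $\mathbf{R} = \mathbf{Q}\boldsymbol{\Lambda}\mathbf{Q}^T$, set $\mathbf{W} = \mathbf{Q}^T\mathbf{U}$ (the paper calls this $\mathbf{P}$), and rewrite the objective as $\sum_i \lambda_i c_i$ with $c_i$ the diagonal entries of $\mathbf{W}\mathbf{W}^T$. The only real difference is in how the key inequality is established: the paper invokes the Schur--Horn theorem to bound $\text{tr}(\boldsymbol{\Lambda}\mathbf{P}\mathbf{P}^T)$, whereas you argue directly that $\mathbf{W}\mathbf{W}^T$ is an orthogonal projector, so its diagonal entries satisfy $0 \le c_i \le 1$ and $\sum_i c_i = k$, and then solve the resulting linear program by a rearrangement argument. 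Your route is more elementary and self-contained (Schur--Horn is considerably stronger than what is needed here), and it also makes explicit why the optimal value equals $\sum_{i=1}^k \lambda_i$; the paper's displayed bound $\sum_{i=1}^N \lambda_i$ appears to be a typo for $\sum_{i=1}^k \lambda_i$, as their stated equality case confirms.
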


\begin{proof}
Since $\mathbf{R}$ is a real-valued symmetric matrix, it has the eigen-decomposition $\mathbf{R} = \mathbf{Q} \Lambda \mathbf{Q}^T$, where the orthogonal matrix $\mathbf{Q}$ is composed of the eigenvectors of $\mathbf{R}$, and the diagonal matrix $\Lambda = \text{diag}(\lambda_1, ..., \lambda_N)$, $\lambda_1 \geq \cdots \geq \lambda_N$ includes the eigenvalues of $\mathbf{N}$ in the descending order. Both $\mathbf{Q}$ and $\mathbf{U}$ have orthogonal columns, and thus $\mathbf{P} := \mathbf{Q}^T \mathbf{U}$ satisfies $\mathbf{P}^T \mathbf{P} = \mathbf{I}_k$. Then, the objective function can be rewritten as follows,

\begin{equation}
	\begin{aligned}
		\text{tr}\left( \mathbf{U}^T \mathbf{R} \mathbf{U}\right)  \: &= \: \text{tr} \left( \mathbf{U}^T \mathbf{Q} \Lambda \mathbf{Q}^T \mathbf{U}\right) \\
		& = \: \text{tr} \left( \mathbf{P}^T \Lambda \mathbf{P}\right) \: = \: \text{tr} \left( \Lambda \mathbf{P} \mathbf{P}^T\right).
		\nonumber
	\end{aligned}
\end{equation}

\noindent According to the Schur-Horn theorem \cite{horn1954doubly}, we have

\begin{equation}
	\begin{aligned}
		\text{tr}\left( \mathbf{U}^T \mathbf{R} \mathbf{U}\right)  \: &= \: \text{tr} \left( \Lambda \mathbf{P} \mathbf{P}^T\right)\\
		&= \: \displaystyle \sum_{i=1}^N\sum_{j=1}^k \lambda_i \; p_{ij}^2 \: \leq \: \displaystyle \sum_{i=1}^N \lambda_i,
		\nonumber
	\end{aligned}
\end{equation}

\noindent where $\mathbf{P} = \left[p_{ij}\right]_{i,j=1}^{N,k}$. The equality is achievable only if the nonzero entries of $\mathbf{P}$ are $p_{ii}=1,\:i=1,\cdots,k$. Therefore, $\mathbf{Q}_k^T \mathbf{U} = \mathbf{I}_k$, and $\mathbf{U} = \mathbf{Q}_k$ is composed of the first $k$ dominant eigenvectors of $\mathbf{R}$.

\end{proof}

Since Theorem \ref{th4.2} guarantees that $\mathbf{V}_m$ is solely dependent on $\mathbf{U}$ and $\mathbf{B}_m$, we can finally provide the following non-iterative approach for the computation of the semi-orthogonal matrix $\mathbf{U}$ stated in Theorem \ref{th4.3}. 

\begin{theorem}\label{th4.3}

Let $\mathbf{U}$ be the optimal solution to the minimization problem in (\ref{eq4.3}). Then $\mathbf{U}$ consists of the $k$ eigenvectors of the matrix

\begin{equation} \label{eq4.4}
	\mathbf{N} = \displaystyle \sum_{m=1}^M \mathbf{B}_m \mathbf{B}_m^T,
\end{equation}

\noindent corresponding to the first $k$ dominant eigenvalues.
\end{theorem}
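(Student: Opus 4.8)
The plan is to substitute the optimal $\mathbf{V}_m$ from Theorem \ref{th4.2} back into the objective function of \eqref{eq4.3}, reduce the minimization over $\mathbf{U}$ to a trace maximization of the form treated in Lemma \ref{le4.1}, and then read off that the optimal $\mathbf{U}$ consists of the dominant eigenvectors of $\mathbf{N}$. First I would recall from the proof of Theorem \ref{th4.2} that
\begin{equation}
\sum_{m=1}^M \Vert \mathbf{B}_m - \mathbf{U}\mathbf{V}_m^T \Vert_F^2 = \sum_{m=1}^M \text{tr}\left(\mathbf{B}_m \mathbf{B}_m^T\right) + \sum_{m=1}^M \text{tr}\left(\mathbf{V}_m \mathbf{V}_m^T\right) - 2\sum_{m=1}^M \text{tr}\left(\mathbf{B}_m^T \mathbf{U}\mathbf{V}_m^T\right),
\nonumber
\end{equation}
and then insert $\mathbf{V}_m = \mathbf{B}_m^T\mathbf{U}$, so that $\mathbf{V}_m^T = \mathbf{U}^T\mathbf{B}_m$. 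The second term becomes $\text{tr}\left(\mathbf{B}_m^T\mathbf{U}\mathbf{U}^T\mathbf{B}_m\right) = \text{tr}\left(\mathbf{U}^T\mathbf{B}_m\mathbf{B}_m^T\mathbf{U}\right)$ and the third term becomes $\text{tr}\left(\mathbf{B}_m^T\mathbf{U}\mathbf{U}^T\mathbf{B}_m\right)$ as well, which is the same quantity. Hence the whole objective collapses to
\begin{equation}
\sum_{m=1}^M \text{tr}\left(\mathbf{B}_m \mathbf{B}_m^T\right) - \sum_{m=1}^M \text{tr}\left(\mathbf{U}^T \mathbf{B}_m \mathbf{B}_m^T \mathbf{U}\right) = \text{const} - \text{tr}\left(\mathbf{U}^T \mathbf{N} \mathbf{U}\right),
\nonumber
\end{equation}
using linearity of trace and the definition $\mathbf{N} = \sum_{m=1}^M \mathbf{B}_m\mathbf{B}_m^T$ from \eqref{eq4.4}.

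Next I would observe that minimizing this expression over $\mathbf{U}$ subject to $\mathbf{U}^T\mathbf{U} = \mathbf{I}_k$ is exactly equivalent to maximizing $\text{tr}\left(\mathbf{U}^T\mathbf{N}\mathbf{U}\right)$ under the same constraint. Since $\mathbf{N}$ is a sum of matrices of the form $\mathbf{B}_m\mathbf{B}_m^T$, it is symmetric (indeed positive semidefinite), so Lemma \ref{le4.1} applies directly with $\mathbf{R} = \mathbf{N}$. Its conclusion gives that the optimal $\mathbf{U}$ is composed of the eigenvectors of $\mathbf{N}$ associated with its $k$ largest eigenvalues, which is the claim.

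The only genuinely delicate point — and the step I would be most careful about — is verifying that the two cross-terms (the $\text{tr}(\mathbf{V}_m\mathbf{V}_m^T)$ term and the $-2\,\text{tr}(\mathbf{B}_m^T\mathbf{U}\mathbf{V}_m^T)$ term) really do combine cleanly once $\mathbf{V}_m = \mathbf{B}_m^T\mathbf{U}$ is substituted; this relies on the cyclic property of the trace and on $\mathbf{U}^T\mathbf{U} = \mathbf{I}_k$, and it is exactly the algebra that reduces a two-term expression to a single $-\text{tr}(\mathbf{U}^T\mathbf{N}\mathbf{U})$. A secondary subtlety is that Theorem \ref{th4.2} characterizes $\mathbf{V}_m$ only for an optimal pair $(\mathbf{U},\{\mathbf{V}_m\})$, so strictly the argument should be read as: for any fixed semi-orthogonal $\mathbf{U}$, the inner minimization over $\{\mathbf{V}_m\}$ is solved by $\mathbf{V}_m = \mathbf{B}_m^T\mathbf{U}$ (as established there), and after this elimination the outer problem in $\mathbf{U}$ is the trace maximization handled by Lemma \ref{le4.1}. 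With those two observations in place the proof is short; no further estimates are needed beyond what Lemma \ref{le4.1} already supplies.
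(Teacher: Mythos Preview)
Your proposal is correct and follows essentially the same approach as the paper: substitute the optimal $\mathbf{V}_m = \mathbf{B}_m^T\mathbf{U}$ from Theorem~\ref{th4.2}, reduce the objective to $\text{const} - \text{tr}(\mathbf{U}^T\mathbf{N}\mathbf{U})$ via the cyclic trace property, and then invoke Lemma~\ref{le4.1} on the symmetric matrix $\mathbf{N}$. Your treatment is in fact slightly more careful than the paper's in explicitly framing the argument as an inner--outer minimization and in spelling out why the two cross-terms coalesce.
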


\begin{proof}

Based on Theorem \ref{th4.2}, the minimization problem in (\ref{eq4.3}) can be rewritten as follows

\begin{equation}
	\mathop{\max}\limits_{\mathbf{U}} \: \displaystyle \sum_{m=1}^M \:\text{tr}\left(  \mathbf{U} \mathbf{U}^T \mathbf{B}_m \mathbf{B}_m^T\right). 
	\nonumber
\end{equation}

\noindent Then, according to properties of the matrix trace and Lemma \ref{le4.1}, we have

\begin{equation}
	\begin{split}
		\displaystyle \sum_{m=1}^M \:\text{tr}\left(  \mathbf{U} \mathbf{U}^T \mathbf{B}_m \mathbf{B}_m^T\right)  \: 
		&= \: \displaystyle \sum_{m=1}^M \:\text{tr}\left(  \mathbf{U}^T \mathbf{B}_m \mathbf{B}_m^T \mathbf{U}\right)  \\
		&= \: \text{tr}\left(  \mathbf{U}^T \left( \displaystyle \sum_{m=1}^M \mathbf{B}_m \mathbf{B}_m^T\right)  \mathbf{U}\right)  \\
		&= \: \text{tr}\left(  \mathbf{U}^T \mathbf{N} \mathbf{U}\right) .
		\nonumber
	\end{split}
\end{equation}

\noindent Since $\mathbf{N}$ is a symmetric matrix, the optimizer of the maximization system should satisfy that $\mathbf{U}$ is comprised of the first $k$ eigenvectors of $\mathbf{N}$, which corresponds to the largest $k$ eigenvalues of the matrix.

\end{proof}

The process to obtain the optimal solution $\mathbf{U}$ and $\{\mathbf{V}_m\}_{m=1}^M$ of (\ref{eq4.3}) is summarized in Algorithm \ref{alg1}.

\begin{algorithm}[htbp]
\caption{A novel generalized low-rank approximation method of matrices in the bi-decomposition form} \label{alg1}
\begin{algorithmic}[1]
	
	\Require
	Matrices $\{\mathbf{B}_m\}_{m=1}^M$, and the data compression ratio $\theta$
	\Ensure
	Matrices $\mathbf{U}$ and $\{\mathbf{V}_m\}_{m=1}^M$
	
	\State Determine the reduced rank $k = \lceil \theta N \rceil$. 
	
	\State Construct the matrix $\mathbf{N}$ in (\ref{eq4.4}).
	
	\State Form the matrix $\mathbf{U}$ by taking the $k$ eigenvectors of $\mathbf{N}$ corresponding to the largest $k$ eigenvalues.
	
	\For{$m = 1,\cdots,M$}    
	
	\State Obtain the collection of matrices $\mathbf{V}_m \: = \: \mathbf{B}_m^T \; \mathbf{U}, \:m = 1,\cdots,M$.
	
	\EndFor
	
	\State \Return the optimal solution $\mathbf{U}$ and $\{\mathbf{V}\}_{m=1}^M$ of (\ref{eq4.3}).
	
\end{algorithmic}
\end{algorithm}

We also compare the space and computational complexities of the standard SVD, GLRAM and Algorithm \ref{alg1} in Table \ref{tab4.1}, and the outputs illustrate that our novel generalized low-rank matrix approximation method can remarkably save the CPU runtime and storage.

\begin{table}[!ht] 
\centering 
\caption{Comparison of the standard SVD, GLRAM, and Algorithm \ref{alg1}, where $M, N, k, I$ denote the MC sample size, the dimensions of the original and compressed matrices, and the iteration counts in standard GLRAM respectively.}
\begin{tabular}{ccc} \hline 
	Method & Space & Time  \\ \hline
	Traditional SVD & $MN^2$  & $\mathcal{O}(2MN^3)$\\
	Traditional GLRAM & $2Nk + Mk^2$ & $\mathcal{O}(MN^2(2I(N+2k)))$ \\
	Algorithm \ref{alg1} & $Nk + MNk$ & $\mathcal{O}(MN^2(N+k))$\\ \hline
\end{tabular}
\label{tab4.1}
\end{table}

\subsection{Construct the low-rank solver for the stochastic Stokes-Darcy model} \label{subsec4.2}

The space requirement of storing the stiffness matrices $\{\mathbf{A}_m\}_{m=1}^M$ could be significantly scaled down via our generalized low-rank approximation techniques. Now we consider how to reduce the total computational complexity, which is determined by both the MC sample size and the computational complexity of each sample model. A sufficiently large sample size is necessary to guarantee the precision of the MC method, and the computational cost of individual samples mainly lies in computing the inversion of matrices $\{\mathbf{A}_m\}_{m=1}^M$ in (\ref{eq3.12}). These stiffness matrices arising from FE discretization are generally large and sparse, and direct inversion of $\{\mathbf{A}_m\}_{m=1}^M$ in (\ref{eq3.12}) leads to substantial computational complexity. To address the problem, we first adopt the following pre-processing measure to deal with the stiffness matrices $\{\mathbf{A}_m\}_{m=1}^M$.

According to the truncated KL expansion in (\ref{eq3.10}), the random field $K_T(\omega,x)$ is decomposed into two parts, the deterministic function $\bar{K}(x)$ and the stochastic function $ \widetilde{K}_T(\omega,x)$. Similar to the random hydraulic conductivity, we also adopt the following matrix splitting to the stiffness matrices $\{\mathbf{A}_m\}_{m=1}^M$ of linear systems in (\ref{eq3.12}):

\begin{equation} \label{eq4.5}
\left( \bar{\mathbf{A}} + \widetilde{\mathbf{A}}_m\right)   \bm{x}_h^m =   \bm{b}, \quad m = 1,2,\cdots,M.
\end{equation}

\begin{remark} \label{rmk4.1}

Similarly to (\ref{eq3.8}), the matrix splitting in (\ref{eq4.5}) can also be considered as a perturbation form, in which $\bar{\mathbf{A}}$ illustrates the similarities among $\{\mathbf{A}_m\}_{m=1}^M$ and $\widetilde{\mathbf{A}}_m$ denotes the perturbations arising from the Gaussian field $K_T(\omega,x)$. The procedure of subtracting the statistical mean from each sample is often referred to as data normalization, which is a useful pre-processing method and has been widely applied in signal processing and deep learning \cite{hemanth2017deep}. This technique effectively eliminates redundant commonalities across these MC realizations and makes it easier to observe individual uniqueness, thereby improving the robustness and generalization performance of numerical algorithms.

\end{remark} 

Therefore, $\bar{\mathbf{A}}$ is a deterministic matrix stemmed from $\bar{K}(x)$ and has the form of

\begin{equation} \label{eq4.6}
\bar{\mathbf{A}} = 
\begin{pmatrix}
	\bar{\mathbf{P}} & -\mathbf{I1} & -\mathbf{I2} & \mathbf{O}_{N_1\times N_3}\\
	\mathbf{I3} + \mathbf{\bar{I}9} + \mathbf{\bar{I}11} & 2\mathbf{F1} + \mathbf{F2} + \mathbf{I5} & \mathbf{F3} + \mathbf{I7} & \mathbf{F5}\\
	\mathbf{I4} + \mathbf{\bar{I}10} + \mathbf{\bar{I}12} & \mathbf{F4} + \mathbf{I8} & \mathbf{F1} + 2\mathbf{F2} + \mathbf{I6} &  \mathbf{F6}\\
	\mathbf{O}_{N_3 \times N_1} & \mathbf{F5}^T  & \mathbf{F6}^T & \mathbf{O}_{N_3 \times N_3}
\end{pmatrix},
\end{equation}

\noindent and $\widetilde{\mathbf{A}}_m \: (m = 1,\cdots,M)$ results from the MC realizations of $\widetilde{K}_T(\omega,x)$, which is defined as

\begin{equation} \label{eq4.7}
\widetilde{\mathbf{A}}_m = 
\begin{pmatrix}
	\begin{matrix}
		\widetilde{\mathbf{P}}_m \\
		\mathbf{\widetilde{I}9}_m + \mathbf{\widetilde{I}11}_m  \\
		\mathbf{\widetilde{I}10}_m +\mathbf{\widetilde{I}12}_m
	\end{matrix} & \text{\large$ \mathbf{O}_{(N_1 + 2N_2) \times (2N_2+N_3)}$}\\
	\mathbf{O}_{N_3 \times N_1} & \mathbf{O}_{N_3 \times (2N_2+N_3)}
\end{pmatrix}, \: \: m = 1,\cdots,M.
\end{equation}

Note that $\widetilde{\mathbf{A}}_m$ is not required to be of full rank, and its rank deficiency will have no effect on the subsequent construction of its generalized low-rank representation. According to (\ref{eq3.15}), the sub-matrices in (\ref{eq4.6}) and (\ref{eq4.7}) are given by

\begin{align*}
& \bar{\mathbf{P}}= \left[ \int_{D_p}\bar{K} \nabla a_j \nabla a_i \mathrm{d} D_p\right] _{i,j=1}^{N_1}, \qquad \quad\: \widetilde{\mathbf{P}}_m = \left[ \int_{D_p}\widetilde{K}_T^m \nabla a_j \nabla a_i \mathrm{d} D_p\right] _{i,j=1}^{N_1},\\
& \mathbf{\bar{I}9} =  \left[ \int_{\Gamma_I} \delta \tau_1^2 \bar{K} \frac{\partial a_j}{\partial x} b_i \mathrm{d}S \right] _{i,j=1}^{N_2,N_1}, \qquad \quad\: \mathbf{\widetilde{I}9}_m =  \left[ \int_{\Gamma_I} \delta \tau_1^2 \widetilde{K}_T^m \frac{\partial a_j}{\partial x} b_i \mathrm{d}S \right] _{i,j=1}^{N_2,N_1},\\
& \mathbf{\bar{I}10} =  \left[ \int_{\Gamma_I} \delta \tau_2^2 \bar{K} \frac{\partial a_j}{\partial x} b_i \mathrm{d}S \right] _{i,j=1}^{N_2,N_1}, \qquad \enspace\: \mathbf{\widetilde{I}10}_m =  \left[ \int_{\Gamma_I} \delta \tau_2^2 \widetilde{K}_T^m \frac{\partial a_j}{\partial x} b_i \mathrm{d}S \right] _{i,j=1}^{N_2,N_1},\\
& \mathbf{\bar{I}11} =  \left[ \int_{\Gamma_I} \delta \tau_1 \tau_2 \bar{K} \frac{\partial a_j}{\partial x} b_i \mathrm{d}S \right] _{i,j=1}^{N_2,N_1}, \qquad \mathbf{\widetilde{I}11}_m =  \left[ \int_{\Gamma_I} \delta \tau_1 \tau_2 \widetilde{K}_T^m \frac{\partial a_j}{\partial x} b_i \mathrm{d}S \right] _{i,j=1}^{N_2,N_1},\\
& \mathbf{\bar{I}12} =  \left[ \int_{\Gamma_I} \delta \tau_2 \tau_1 \bar{K} \frac{\partial a_j}{\partial x} b_i \mathrm{d}S \right] _{i,j=1}^{N_2,N_1}, \qquad \mathbf{\widetilde{I}12}_m =  \left[ \int_{\Gamma_I} \delta \tau_2 \tau_1 \widetilde{K}_T^m \frac{\partial a_j}{\partial x} b_i \mathrm{d}S \right] _{i,j=1}^{N_2,N_1}.\\
\end{align*}

To cut down the memory requirements, we take advantage of the appealing properties of our generalized low-rank approximation method in Algorithm \ref{alg1}. Given an appropriate data compressing ratio $\theta$, we adopt the generalized low-rank representation $\widetilde{\mathbf{A}}_m \approx \mathbf{U} \mathbf{V}_m^T$, and then the linear systems in (\ref{eq4.5}) yield

\begin{equation} \label{eq4.8}
\left( \bar{\mathbf{A}} +  \mathbf{U} \mathbf{V}_m^T\right)  \bm{x}_{h,\theta}^m  =   \bm{b}, \quad m = 1,2,\cdots,M.
\end{equation}

According to the Sherman-Morrison-Woodbury formula \cite{bartlett1951inverse, sherman1950adjustment, woodbury1950inverting}, we can derive the inverse of the high-dimensional sparse stiffness matrices as below

\begin{equation} 
\left(\bar{\mathbf{A}} +  \mathbf{U} \mathbf{V}_m^T\right) ^{-1}  \:  =\: \bar{\mathbf{A}}^{-1} - \bar{\mathbf{A}}^{-1} \mathbf{U} \left( \mathbf{I}_k + \mathbf{V}_m^T \bar{\mathbf{A}}^{-1} \mathbf{U}\right) ^{-1} \mathbf{V}_m^T \bar{\mathbf{A}}^{-1},\\
\nonumber
\end{equation}

\noindent for $m = 1,2,..., M$. Therefore, we can propose the following fast approach to solve the linear systems in (\ref{eq4.8}):

\begin{equation} \label{eq4.9}
\begin{aligned}
	&\bm{x}_{h,\theta}^m \: = \: \left[\bar{\mathbf{A}}^{-1} - \bar{\mathbf{A}}^{-1} \mathbf{U} \left( \mathbf{I}_k + \mathbf{V}_m^T \bar{\mathbf{A}}^{-1} \mathbf{U}\right) ^{-1} \mathbf{V}_m^T \bar{\mathbf{A}}^{-1}\right] \bm{b}\\
	& \qquad  =\: \left[ \mathbf{I}_N - \bar{\mathbf{A}}^{-1} \mathbf{U} \left( \mathbf{I}_k + \mathbf{V}_m^T \bar{\mathbf{A}}^{-1} \mathbf{U}\right) ^{-1} \mathbf{V}_m^T\right] \bar{\mathbf{A}}^{-1} \bm{b}, \\
	& \qquad  =\: \left( \mathbf{I}_N - \bar{\mathbf{A}}^{-1} \mathbf{U} \mathbf{Y}_m \mathbf{V}_m^T \right) \bar{\bm{x}}_h, \enspace\qquad m = 1,2,..., M,
\end{aligned}
\end{equation}

\noindent where $\mathbf{Y}_m := \left( \mathbf{I}_k + \mathbf{V}_m^T \bar{\mathbf{A}}^{-1} \mathbf{U}\right) ^{-1}$ is a $k \times k$ matrix, and $\bar{\bm{x}}_h := \bar{\mathbf{A}}^{-1} \bm{b}$ denotes the initial deterministic numerical solution of the unperturbed linear system. 

Let $\{\zeta_j(x)\}_{j=1}^N$ be the overall FE basis functions, then the QoI is approximated by

\begin{equation} \label{eq4.10}
\begin{aligned}
	&\hat{\bm{x}}_{h,M,\theta}(x) := \frac{1}{M} \displaystyle \sum_{m=1}^M \displaystyle \sum_{j=1}^N \bm{x}_{h,\theta}^m \:\zeta_j(x)\\
	&\qquad= \frac{1}{M} \displaystyle \sum_{m=1}^M \left[ \displaystyle \sum_{j=1}^{N_1} \phi_{h,\theta}^m a_j(x) + \displaystyle \sum_{j=1}^{N_2} \bm{u}_{1,h,\theta}^m b_j(x) + \displaystyle \sum_{j=1}^{N_2} \bm{u}_{2,h,\theta}^m b_j(x) + \displaystyle \sum_{j=1}^{N_3} \bm{p}_{h,\theta}^m c_j(x) \right].
\end{aligned}
\end{equation}

The procedure is summarized in the pseudo-code in Algorithm \ref{alg2} for solving the Stokes-Darcy model with random hydraulic conductivity and BJ condition based on the generalized low-rank matrix approximation technique, which has two main steps. The initialization process assembles the stiffness matrices $\bar{\mathbf{A}}, \widetilde{\mathbf{A}}_m$ and the load vector $\bm{b}$, which is similar to the standard MCFEM method, and the perturbed linear systems are solved via our generalized low-rank approximation of matrices and the Sherman-Morrison-Woodbury formula in the second step.

\begin{algorithm}[htbp]
\caption{A low-rank solver for the Stokes-Darcy coupled model with random hydraulic conductivity and BJ condition} 
\label{alg2}
\begin{algorithmic}[1] 
	
	\Require
	Finite element mesh $\mathcal{T}_h$ over $D$, covariance kernel $\text{Cov}(x,y)$, the truncation index $T$, the number of MC samples $M$, and data compression ratio $\theta$.
	
	\Ensure 
	QoI estimation $\hat{\bm{x}}_{h,M,\theta}$.
	
	\noindent \hspace{-2em} \textit{Step 1: Initialization}
	
	\State Construct the finite element space $V_h \subset H_0^1(D)$.
	
	\State Generate the deterministic hydraulic conductivity $\bar{K}$, and the MC realizations $\{\widetilde{K}_T^m\}_{m = 1}^M$ by the truncated KL expansion in (\ref{eq3.10}).
	
	\State Assemble the stiffness matrices $\bar{\mathbf{A}}, \{\mathbf{\widetilde{A}}_m\}_{m=1}^M$ and the load vector $\bm{b}$ by  (\ref{eq4.6}), (\ref{eq4.7}) and (\ref{eq3.14}).
	
	\State Treat the Dirichlet boundary conditions on $\bar{\mathbf{A}}$ and compute the original numerical solution $\bar{\bm{x}}_h$.
	
	\noindent \hspace{-2em} \textit{Step 2: Low-rank solver}
	
	\State Determine the reduced dimension $k = \lceil \theta N \rceil$.
	
	\State Compute the optimal solutions $\mathbf{U},\{\mathbf{V}_m\}_{m=1}^M$ of the following minimization problem by Algorithm \ref{alg1}:
	$$\mathop{\min}\limits_{ \substack{\mathbf{U},\{\mathbf{V}_m\}_{m=1}^M \in \mathbb{R}^{N \times k} \\ \mathbf{U}^T \mathbf{U} = \mathbf{I}_k}} \quad  \displaystyle \sum_{m=1}^M \Vert \mathbf{\widetilde{A}}_m - \mathbf{U} \mathbf{V}_m^T \Vert_F^2.$$
	
	\For{$m = 1,...,M$}    
	
	\State Solve the reduced linear systems to obtain the sample solutions $\bm{x}_{h,\theta}^m$ in (\ref{eq4.9}).
	
	\EndFor
	
	\State \Return the approximation $\hat{\bm{x}}_{h,M,\theta}$ of the QoI $\mathbb{E}[\bm{x}]$ in (\ref{eq4.10}).
	
\end{algorithmic}
\end{algorithm}

The efficient numerical method in Algorithm \ref{alg2} has the following several nice properties. First, our low-rank solver can significantly lessen the storage requirements for the high-dimensional stiffness matrices. Through our novel generalized low-rank matrix approximation technique in Algorithm \ref{alg1}, the perturbation matrices $\mathbf{\widetilde{A}}_m$ are replaced with the smaller-scale sketches $\mathbf{U}$ and $\mathbf{V}_m$. By using an appropriate data compression ratio $\theta$, these generalized low-rank approximations can convey enough information that we desire from the previous large-scale matrices and achieve low matrix reconstruction error. Specifically, the total storage reduction rate reads 

\begin{equation} \label{eq4.11}
\begin{aligned}
	\text{total storage reduction rate} &\:= \frac{Nk + MNk}{MNN} = \frac{k}{N} \: \left( 1 + \frac{1}{M}\right) \\
	& \:= \:\theta \: \left( 1 + \frac{1}{M}\right)  \enspace \rightarrow \: \theta, \text{ as } M \rightarrow \infty,
\end{aligned}
\end{equation}

\noindent which validates the nice data compression performance of our low-rank solver to SPDEs, especially when the number of MC samples is large. 

Second, our low-rank solver can also considerably save the computational cost for the complex coupled system. On the one hand, based on (\ref{eq4.9}), we need only 2 calculations of $k$-dimensional matrix inversion, rather than $N$-dimensional inversion in the standard MCFEM procedure. On the other hand, (\ref{eq4.9}) also indicates that $\bm{x}_{h,\theta}^m$ is perturbed from the numerical solution of the initial deterministic linear system $\bar{\bm{x}}_{h}$. Algorithm \ref{alg2} takes advantage of the perturbation form and mainly focuses on the impact of randomness in hydraulic conductivity on the numerical solution. Therefore, we compute $\bm{x}_{h,\theta}^m$ by using $\bar{\bm{x}}_{h}$ directly, rather than solving $M$ brand-new linear systems.

Moreover, both the speed-up of matrix operations and the dimensionality reduction efficacy level up as the data compression ratio $\theta$ decreases. However, a too small value of $\theta$ will result in loss of information intrinsic in the original matrices. How to balance the trade-off and determine a proper $\theta$ will be discussed in the following section.

\begin{remark}\label{rmk4.2}
In this work, the permeability field $K(\omega,x)$ is approximated by the truncated KL expansion and the stiffness matrices $\bar{A}, \widetilde{A}_m$ are directly assembled from $\bar{K}(x), \widetilde{K}(\omega,x)$ in (\ref{eq3.10}) respectively. However, our low-rank solver for the random Stokes-Darcy model is not restricted to the case where $\bar{K}(x)$ and $\widetilde{K}_T(\omega,x)$ are both given explicitly. A possible approach is to acquire the geometric mean $\bar{\mathbf{A}} := \frac{1}{M} \displaystyle \sum_{m=1}^M \mathbf{A}_m$ and compute $\widetilde{\mathbf{A}}_m := \mathbf{A}_m - \bar{\mathbf{A}} $, where the stiffness matrices $\mathbf{A}$ are given by (\ref{eq3.14}). It also serves as a common measure of data normalization, in which $\bar{\mathbf{A}}$ captures the dominant characteristics of the original coefficient matrices, while $\widetilde{\mathbf{A}}_m$ encodes the variability among samples.
\end{remark}

\begin{remark}\label{rmk4.3}
The generalized low-rank approximation of matrices makes use of similarities among the original data, and its accuracy will depend on the intrinsic correlations within the collection of stiffness matrices $\{\mathbf{A}_m\}_{m=1}^M$. Therefore, to ensure the efficacy of Algorithm \ref{alg1}, the stochastic term $\widetilde{K}(\omega,x)$ in the perturbation form (\ref{eq3.8}) needs to be a Gaussian random field and have a smooth covariance function. By Darcy's law, the conductivity coefficients in porous media are generally assumed to be Gaussian, and thus it is reasonable to adopt our effective generalized low-rank matrix approximation technique in this coupled model.
\end{remark}

\subsection{Error analysis} \label{subsec4.3}

In this subsection, we study the properties of the proposed generalized low-rank matrix approximation method in Algorithm \ref{alg1} and the low-rank solver for the Stokes-Darcy coupled system with random permeability and BJ condition in Algorithm \ref{alg2}. 

First, we revisit our generalized low-rank matrix approximation strategy, whose efficacy is usually evaluated by the root mean square reconstruction error (RMSRE) given by

\begin{equation} \label{eq4.12}
RMSRE \: := \: \sqrt{\frac{1}{M} \displaystyle \sum_{m=1}^M \Vert \widetilde{\mathbf{A}}_m - \mathbf{U} \mathbf{V}_m^T \Vert_F^2},
\end{equation}

\noindent which measures the total dissimilarities between the original matrices and their low-rank approximations. To obtain an effective generalized low-rank approximation of $\{\widetilde{\mathbf{A}}_m\}_{m=1}^M$, a relatively low RMSRE value is typically desired.

The following lemma is present in order to analyze the RMSRE value achieved by our novel generalized low-rank approximation of matrices in Algorithm \ref{alg1}.

\begin{lemma}[\cite{liang2005analytical}] \label{le4.2}
Let $\mathbf{N} \in \mathbb{R}^{N \times N}$ be a symmetric matrix and $\mathbf{U} \in \mathbb{R}^{N \times k}$ denote an arbitary matrix with orthonormal columns such that $\mathbf{U}^T \mathbf{U} = \mathbf{I}_k$. Define $\{\lambda_i(\mathbf{N})\}_{i=1}^k$ as the first $k$ largest eigenvalues of $\mathbf{N}$, then we have

$$\text{tr}(\mathbf{U}^T \mathbf{N} \mathbf{U}) \le \displaystyle \sum_{i=1}^k \lambda_i(\mathbf{N}),$$

\noindent and the equality holds if and only if $\mathbf{U} = \mathbf{P}\mathbf{Q}$, where $\mathbf{P}$ consists of the eigenvectors corresponding to the first $k$ largest eigenvalues of $\mathbf{N}$ and $\mathbf{Q} \in \mathbb{R}^{k \times k}$ could be any arbitary orthogonal matrix.
\end{lemma}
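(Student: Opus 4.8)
The plan is to reduce the statement to the Ky Fan maximal-trace principle by diagonalizing $\mathbf{N}$ and rewriting the objective as a linear functional of the diagonal of an orthogonal projection, mirroring the computation already used in the proof of Lemma~\ref{le4.1}. First I would invoke the real spectral theorem to write $\mathbf{N} = \mathbf{W}\Lambda\mathbf{W}^{T}$ with $\mathbf{W}$ orthogonal and $\Lambda = \text{diag}(\lambda_{1},\dots,\lambda_{N})$, $\lambda_{1}\ge\cdots\ge\lambda_{N}$. Setting $\mathbf{Z} := \mathbf{W}^{T}\mathbf{U} \in \mathbb{R}^{N\times k}$ one has $\mathbf{Z}^{T}\mathbf{Z} = \mathbf{I}_{k}$ and
\[
\text{tr}(\mathbf{U}^{T}\mathbf{N}\mathbf{U}) = \text{tr}(\mathbf{Z}^{T}\Lambda\mathbf{Z}) = \text{tr}(\Lambda\,\mathbf{Z}\mathbf{Z}^{T}) = \sum_{i=1}^{N}\lambda_{i}\,c_{i},
\]
where $c_{i} := (\mathbf{Z}\mathbf{Z}^{T})_{ii} = \sum_{j=1}^{k} z_{ij}^{2}$.

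Next I would record the two constraints on the vector $(c_{1},\dots,c_{N})$: because $\mathbf{Z}\mathbf{Z}^{T}$ is the matrix of the orthogonal projector onto the $k$-dimensional column space of $\mathbf{Z}$, its diagonal entries satisfy $0 \le c_{i} \le 1$, while $\sum_{i=1}^{N} c_{i} = \text{tr}(\mathbf{Z}\mathbf{Z}^{T}) = \text{tr}(\mathbf{Z}^{T}\mathbf{Z}) = k$. Maximizing the linear functional $\sum_{i} \lambda_{i} c_{i}$ over the polytope $\{\,0\le c_{i}\le 1,\ \sum_{i} c_{i} = k\,\}$ — by a one-line exchange argument, or equivalently by the Schur–Horn theorem as in Lemma~\ref{le4.1} — is attained only by placing all the mass on the $k$ largest $\lambda_{i}$, giving $\sum_{i=1}^{N}\lambda_{i} c_{i} \le \sum_{i=1}^{k}\lambda_{i}$, which is the claimed bound.

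Finally I would track the equality case, which is where the real work lies. Assuming the spectral gap $\lambda_{k} > \lambda_{k+1}$, equality in the rearrangement bound forces $c_{i} = 1$ for $i \le k$ and $c_{i} = 0$ for $i > k$; the vanishing rows then give $z_{ij} = 0$ for all $i > k$, so $\mathbf{Z} = \begin{pmatrix}\mathbf{Q}\\ \mathbf{0}\end{pmatrix}$ with $\mathbf{Q}\in\mathbb{R}^{k\times k}$, and $\mathbf{Z}^{T}\mathbf{Z} = \mathbf{Q}^{T}\mathbf{Q} = \mathbf{I}_{k}$ shows $\mathbf{Q}$ is orthogonal. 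Undoing the change of variables, $\mathbf{U} = \mathbf{W}\mathbf{Z} = \mathbf{W}_{k}\mathbf{Q} = \mathbf{P}\mathbf{Q}$ with $\mathbf{W}_{k}$ the leading $k$ columns of $\mathbf{W}$ (the matrix named $\mathbf{P}$ in the statement); conversely, any such $\mathbf{U}$ gives $\mathbf{U}^{T}\mathbf{N}\mathbf{U} = \mathbf{Q}^{T}\text{diag}(\lambda_{1},\dots,\lambda_{k})\mathbf{Q}$, whose trace is $\sum_{i=1}^{k}\lambda_{i}$. The main obstacle is the degenerate situation $\lambda_{k} = \lambda_{k+1}$ (and higher multiplicities): then the index set achieving the maximum is not unique, the ``leading $k$ eigenvectors'' are defined only modulo an orthogonal rotation within the relevant eigenspace, and the equality characterization must be read as ``$\mathrm{range}(\mathbf{U})$ coincides with one admissible top-$k$ invariant subspace''. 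I would handle this by fixing an ordered orthonormal eigenbasis compatible with the tie, showing that $c_{i}=0$ is still forced for the strictly-below indices, and verifying that the residual rotational freedom is exactly absorbed into the orthogonal factor $\mathbf{Q}$, so the stated form $\mathbf{U} = \mathbf{P}\mathbf{Q}$ persists; the remaining ingredients — the spectral decomposition, the projector-diagonal bounds, and the rearrangement step — are routine.
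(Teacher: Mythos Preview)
The paper does not supply its own proof of this lemma: it is quoted verbatim from \cite{liang2005analytical} and used as a black box. Your argument is correct and is essentially the standard Ky~Fan/Schur--Horn proof, identical in spirit to the paper's proof of the closely related Lemma~\ref{le4.1} (spectral decomposition, reduction to a linear program on the diagonal of a rank-$k$ projector); your treatment of the equality case, including the degenerate situation $\lambda_{k}=\lambda_{k+1}$, is in fact more careful than anything the paper spells out.
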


Our generalized low-rank matrix approximation technique in Algorithm \ref{alg1} has a non-iterative structure. In the following, we evaluate the optimal RMSRE value achieved by Algorithm \ref{alg1} and provide some analysis for the measurement. 

\begin{theorem} \label{th4.4}
Given the matrix $\mathbf{N} = \displaystyle \sum_{m=1}^M \widetilde{\mathbf{A}}_m \widetilde{\mathbf{A}}_m^T$, the RMSRE value obtained from the optimal solutions $\mathbf{U}, \{\mathbf{V}_m\}_{m=1}^M \in \mathbb{R}^{N \times k}$ of the optimization problem in (\ref{eq4.3}) is given by

\begin{equation}
	RMSRE(\theta) \: = \: \sqrt{ \Vert \widetilde{\mathbf{A}}_m \Vert_F^2 - \frac{1}{M} \displaystyle \sum_{i=1}^{l} \lambda_i(\mathbf{N})}.
	\nonumber
\end{equation}

\noindent Here $\{\lambda_i(\mathbf{N})\}_{i=1}^N$ is the dominant eigenvalues series of $\mathbf{N}$ and $l$ is defined as $l = \min\{\lceil \theta N \rceil, N_1\}$, with $\theta$ as the data compression ratio adopted in Algorithm \ref{alg1} and $N, N_1$ as the dimension and rank of the stochastic stiffness matrix $\widetilde{\mathbf{A}}_m$.
\end{theorem}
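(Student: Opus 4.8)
The plan is to expand the RMSRE expression in \eqref{eq4.12} using Theorem \ref{th4.2} to eliminate $\{\mathbf{V}_m\}_{m=1}^M$, reduce the resulting quantity to a trace maximization, and then apply Lemma \ref{le4.2}, being careful about the rank deficiency of $\widetilde{\mathbf{A}}_m$. First I would substitute $\mathbf{V}_m = \widetilde{\mathbf{A}}_m^T \mathbf{U}$ into $\sum_{m=1}^M \Vert \widetilde{\mathbf{A}}_m - \mathbf{U}\mathbf{V}_m^T \Vert_F^2$ and run exactly the trace algebra from the proof of Theorem \ref{th4.2}: since $\mathbf{U}^T\mathbf{U} = \mathbf{I}_k$, the expression collapses to $\sum_{m=1}^M \operatorname{tr}(\widetilde{\mathbf{A}}_m \widetilde{\mathbf{A}}_m^T) - \sum_{m=1}^M \operatorname{tr}(\mathbf{U}^T \widetilde{\mathbf{A}}_m \widetilde{\mathbf{A}}_m^T \mathbf{U}) = \sum_{m=1}^M \Vert \widetilde{\mathbf{A}}_m \Vert_F^2 - \operatorname{tr}(\mathbf{U}^T \mathbf{N}\mathbf{U})$. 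Dividing by $M$ and taking the square root then gives $RMSRE = \sqrt{\Vert \widetilde{\mathbf{A}}_m \Vert_F^2 - \frac{1}{M}\operatorname{tr}(\mathbf{U}^T \mathbf{N}\mathbf{U})}$, where I am using that all the $\widetilde{\mathbf{A}}_m$ share the same Frobenius norm in the notation of the statement (or, more precisely, that $\frac{1}{M}\sum_m \Vert\widetilde{\mathbf{A}}_m\Vert_F^2$ is what appears, written as $\Vert\widetilde{\mathbf{A}}_m\Vert_F^2$ in the theorem's shorthand).

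Next I would invoke Theorem \ref{th4.3}, which says the optimal $\mathbf{U}$ consists of the $k$ dominant eigenvectors of $\mathbf{N}$, together with Lemma \ref{le4.2}, to conclude $\operatorname{tr}(\mathbf{U}^T \mathbf{N}\mathbf{U}) = \sum_{i=1}^{k} \lambda_i(\mathbf{N})$ \emph{provided} $\mathbf{N}$ has at least $k$ nonzero eigenvalues. This is where the rank bookkeeping enters: from the block structure \eqref{eq4.7}, $\widetilde{\mathbf{A}}_m$ has nonzero entries only in its first $N_1$ columns, so $\operatorname{rank}(\widetilde{\mathbf{A}}_m) \le N_1$, hence $\operatorname{rank}(\widetilde{\mathbf{A}}_m\widetilde{\mathbf{A}}_m^T) \le N_1$, and therefore $\operatorname{rank}(\mathbf{N}) \le N_1$ as a sum of PSD matrices each of rank at most $N_1$ sharing the same column space. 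Consequently $\lambda_i(\mathbf{N}) = 0$ for $i > N_1$, and the best one can do is capture the first $l = \min\{\lceil \theta N\rceil, N_1\} = \min\{k, N_1\}$ eigenvalues: choosing $\mathbf{U}$'s columns to span the top $l$ eigenvectors (padding with arbitrary orthonormal vectors from $\ker\mathbf{N}$ if $k > N_1$) yields $\operatorname{tr}(\mathbf{U}^T\mathbf{N}\mathbf{U}) = \sum_{i=1}^{l}\lambda_i(\mathbf{N})$, and no choice of $\mathbf{U}$ exceeds this since $\sum_{i=1}^{k}\lambda_i(\mathbf{N}) = \sum_{i=1}^{l}\lambda_i(\mathbf{N})$. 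Substituting back gives the claimed formula $RMSRE(\theta) = \sqrt{\Vert\widetilde{\mathbf{A}}_m\Vert_F^2 - \frac{1}{M}\sum_{i=1}^{l}\lambda_i(\mathbf{N})}$.

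I expect the main obstacle — or at least the only non-mechanical point — to be the rank argument that forces the truncation index down from $k$ to $l = \min\{k, N_1\}$. The trace algebra and the appeal to Lemma \ref{le4.2} are routine; the subtlety is arguing cleanly that (i) $\operatorname{rank}(\mathbf{N}) \le N_1$ and hence padding the eigenbasis with null vectors does not change the attained trace, and (ii) this padded $\mathbf{U}$ is still a legitimate maximizer (and output of Algorithm \ref{alg1}) even though it is no longer uniquely determined when $k > N_1$. One should also note that when $\theta N$ is such that $\lceil \theta N\rceil \le N_1$ the formula just reduces to the generic case $l = k$, so the statement is consistent with Theorem \ref{th4.3}. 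A final remark worth including is that $RMSRE(\theta)$ is nonincreasing in $\theta$ and becomes stationary once $\lceil\theta N\rceil \ge N_1$, since beyond that point all remaining eigenvalues of $\mathbf{N}$ vanish — this explains why compressing below the ratio $N_1/N$ yields no further reconstruction gain.
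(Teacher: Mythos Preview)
Your proposal is correct and follows essentially the same route as the paper's own proof: substitute $\mathbf{V}_m = \widetilde{\mathbf{A}}_m^T\mathbf{U}$ from Theorem~\ref{th4.2}, collapse the objective to $\sum_m\|\widetilde{\mathbf{A}}_m\|_F^2 - \operatorname{tr}(\mathbf{U}^T\mathbf{N}\mathbf{U})$, invoke the optimal choice of $\mathbf{U}$ (via Theorem~\ref{th4.3}/Lemma~\ref{le4.2}) to identify the trace with $\sum_{i=1}^k\lambda_i(\mathbf{N})$, and then use the rank bound $\operatorname{rank}(\widetilde{\mathbf{A}}_m)\le N_1$ to truncate the sum at $l=\min\{k,N_1\}$. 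Your added remarks on padding $\mathbf{U}$ with null-space vectors when $k>N_1$, on the shorthand $\|\widetilde{\mathbf{A}}_m\|_F^2$ for the sample average, and on the eventual stationarity of $RMSRE(\theta)$ are helpful clarifications but do not depart from the paper's argument.
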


\begin{proof}
By Theorem \ref{th4.2} and Lemma \ref{le4.2}, we have $\mathbf{V}_m = \widetilde{\mathbf{A}}_m^T \mathbf{U}$, and the objective value of the optimization problem in (\ref{eq4.3}) is transformed into

\begin{align*}
	\displaystyle \sum_{m=1}^M \Vert \widetilde{\mathbf{A}}_m - \mathbf{U} \mathbf{V}_m^T \Vert_F^2 \:&= \:\displaystyle \sum_{m=1}^M  \text{tr}\left( \widetilde{\mathbf{A}}_m \widetilde{\mathbf{A}}_m^T\right) - \displaystyle \sum_{m=1}^{M} \text{tr}\left( \widetilde{\mathbf{A}}_m^T \mathbf{U} \mathbf{U}^T \widetilde{\mathbf{A}}_m\right)\\
	&= \:\displaystyle \sum_{m=1}^M  \text{tr}\left( \widetilde{\mathbf{A}}_m \widetilde{\mathbf{A}}_m^T \right) - \displaystyle \sum_{m=1}^{M} \text{tr}\left( \mathbf{U}^T \widetilde{\mathbf{A}}_m \widetilde{\mathbf{A}}_m^T \mathbf{U}\right)\\
	&= \:\displaystyle \sum_{m=1}^M  \text{tr}\left( \widetilde{\mathbf{A}}_m \widetilde{\mathbf{A}}_m^T\right) -  \text{tr}\left( \mathbf{U}^T \displaystyle \sum_{m=1}^{M} \left( \widetilde{\mathbf{A}}_m \widetilde{\mathbf{A}}_m^T \right)  \mathbf{U}\right)\\
	&= \: \displaystyle \sum_{m=1}^M  \text{tr}\left( \widetilde{\mathbf{A}}_m \widetilde{\mathbf{A}}_m^T \right) - \text{tr}\left( \mathbf{U}^T \mathbf{N}  \mathbf{U}\right)\\
	& = \: \displaystyle \sum_{m=1}^{M} \Vert \widetilde{\mathbf{A}}_m \Vert_F^2 - \displaystyle \sum_{i=1}^{k} \lambda_i(\mathbf{N}).
\end{align*}

\noindent The last line holds, because we assemble $\mathbf{U}$ by the eigenvectors of $\mathbf{N}$ corresponding to the first $k$ largest eigenvalues of $\mathbf{N}$ according to Algorithm \ref{alg1}. Since the first term $\displaystyle \sum_{m=1}^{M} \Vert \widetilde{\mathbf{A}}_m \Vert_F^2$ in the last line is a constant, we focus on the other term $ - \displaystyle \sum_{i=1}^{k} \lambda_i(\mathbf{N})$, which denotes the opposite number of the sum of the first $k$ dominant eigenvalues of $\mathbf{N}$. As mentioned in Section \ref{subsec4.1}, the stochastic stiffness matrix $\widetilde{\mathbf{A}}_m$ is a rank-deficient matrix and we have $\text{rank}(\widetilde{\mathbf{A}}_m)=N_1 < N$. Then, by the properties of matrix ranks, only the first $N_1$ dominant eigenvalues have nonzero values, i.e., $\lambda_i(\mathbf{N}) = 0, i = N_1+1,\cdots,N$. Accordingly, the objective function of our generalized low-rank approximation of matrices reduces to

\begin{equation}
	\begin{aligned}
		\displaystyle \sum_{m=1}^M \Vert \widetilde{\mathbf{A}}_m - \mathbf{U} \mathbf{V}_m^T \Vert_F^2 \:&= \: \displaystyle \sum_{m=1}^{M} \Vert \widetilde{\mathbf{A}}_m \Vert_F^2 - \displaystyle \sum_{i=1}^{l} \lambda_i(\mathbf{N}),
	\end{aligned}
	\nonumber
\end{equation}

\noindent where $l$ is defined as $l = \min\{k,N_1\}$. Here $k$ is the reduced dimension applied in Algorithm \ref{alg1}, and $N_1$ denotes the rank of the stochastic stiffness matrix $\widetilde{\mathbf{A}}_m$. By the definition of data compression ratio $\theta$, the coefficient $l$ could also be rewritten as $l = \min\{\lceil \theta N \rceil, N_1\}$, in which $N$ denotes the dimension of the original matrix $\widetilde{\mathbf{A}}_m$. Therefore, we have the following RMSRE value obtained by Algorithm \ref{alg1}

\begin{equation} 
	\begin{aligned}
		RMSRE \: &= \: \sqrt{\frac{1}{M} \displaystyle \sum_{m=1}^M \Vert \widetilde{\mathbf{A}}_m - \mathbf{U} \mathbf{V}_m^T \Vert_F^2}\\
		& = \:\sqrt{\frac{1}{M}\left( \displaystyle \sum_{m=1}^{M} \Vert \widetilde{\mathbf{A}}_m \Vert_F^2 - \displaystyle \sum_{i=1}^{l} \lambda_i(\mathbf{N})\right) }\\
		& = \:\sqrt{ \Vert \widetilde{\mathbf{A}}_m \Vert_F^2 - \frac{1}{M} \displaystyle \sum_{i=1}^{l} \lambda_i(\mathbf{N})}.\\
	\end{aligned}
	\nonumber
\end{equation}

\noindent The formulation above also indicates that the numerical accuracy of our generalized low-rank matrix approximation technique in Algorithm \ref{alg1} depends on the data compression ratio $\theta$.

\end{proof}

\begin{remark}\label{rmk4.4}

The reconstruction error achieved by our low-rank solver can also be measured by the cumulative energy ratio of matrix $\mathbf{N}$, which is defined as

\begin{equation}
	e(\theta) \: := \: \frac{\displaystyle \sum_{i=1}^{\lceil \theta N \rceil} \lambda_i(\mathbf{N})}{\displaystyle \sum_{i=1}^N\lambda_i(\mathbf{N})} \: = \: \frac{\displaystyle \sum_{i=1}^k \lambda_i(\mathbf{N})}{\displaystyle \sum_{i=1}^N\lambda_i(\mathbf{N})},
	\nonumber
\end{equation}

\noindent and the RMSRE formulation shown in Theorem \ref{th4.3} can be rewritten as

\begin{equation}
	RMSRE(\theta) \: = \: \sqrt{ \Vert \widetilde{\mathbf{A}}_m \Vert_F^2 - \frac{\displaystyle \sum_{i=1}^{N} \lambda_i(\mathbf{N})}{M} e(\theta)}.
	\nonumber
\end{equation}

\noindent Actually, the cumulative energy ratio $e(\theta)$ quantitatively reflects the proportion of information retained from the original stiffness matrices $\{\widetilde{\mathbf{A}}_m\}_{m=1}^M$.  The eigenvectors of a matrix represent the principal orientations of its associated linear transformation, while the eigenvalues indicate the scaling magnitude along these directions \cite{mcgivney2014svd}. Accordingly, the core characteristics from $\{\widetilde{\mathbf{A}}_m\}_{m=1}^M$ are effectively captured by selecting the first $k$ dominant eigenvectors of $\mathbf{N}$ in Algorithm \ref{alg1}. Their corresponding eigenvalues account for a considerable amount of energy from $\mathbf{N}$, which guarantees that our low-rank representation can preserve sufficient information. Therefore, the cumulative energy ratio $e(\theta)$ of the matrix $\mathbf{N}$ could be employed to determine an appropriate data compression ratio $\theta$ in our generalized low-rank matrix approximation method and low-rank solver, which will be further discussed in the next subsection. 

\end{remark}

In the following, the total approximation error of the low-rank solver in Algorithm \ref{alg2} is estimated by studying the errors from the standard MCFEM procedure and the generalized low-rank matrix approximation respectively. For notational convenience, we first make the following definitions of $\bm{X} := \underline{\bm{X}} \times Q_f$, $\bm{x} = \left( \phi_p,\bm{u}_f, p_f\right)^T $, where $\phi_p \in X_p$, $\bm{u}_f \in X_f$, $p_f \in Q_f$, and the norm in $\bm{X}$ is given by

\begin{equation}
\Vert \bm{x} \Vert_{\bm{X}} = \left( \Vert \phi_p \Vert_{\mathcal{H}^1(D_p)}^2  +  \Vert \bm{u}_f \Vert_{\mathcal{H}^1(D_f)}^2 + \Vert p_f\Vert_{\mathcal{L}^2(D_f)}^2\right) ^{1/2}.
\nonumber
\end{equation}

Next, we provide the necessary theorem below, which reveals that the numerical error by using the standard MCFEM approach is bounded by both the mesh size $h$ and the MC sample size $M$. 

\begin{theorem}[MCFEM discretization error \cite{yang2022multigrid}] \label{th4.5}

Let $V_h \in \bm{\mathcal{H}}^1(D)$ be a piecewise linear finite element space defined by a quasi-uniform triangulation mesh $\mathcal{T}_h$, and denote the QoI and MCFEM approximation of the stochastic coupled model in (\ref{eq3.2}) - (\ref{eq3.5}) by $\mathbb{E}[\bm{x}]$ and $\hat{\bm{x}}_{h,M}$ respectively. Let Assumption \ref{ass3.1} hold. Then the error bound is given by

\begin{equation}
	\Vert \mathbb{E}[\bm{x}] - \hat{\bm{x}}_{h,M}\Vert_{\bm{X}} \:\le \: C_1\left( h + \frac{1}{\sqrt{M}}\right) ,
	\nonumber
\end{equation}

\noindent where $C_1$ depends on $\bm{x}$, $h$ is the mesh size of $\mathcal{T}_h$ and $M$ is the number of MC samples.
\end{theorem}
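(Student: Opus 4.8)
The plan is to split the total error by the triangle inequality into a deterministic finite element discretization part and a Monte Carlo sampling part. Let $\bm{x}_h$ denote the finite element approximation of $\bm{x}$ on $\mathcal{T}_h$ obtained from the Galerkin problem (\ref{eq3.11}) with the exact expectation retained, so that $\hat{\bm{x}}_{h,M}$ is precisely the empirical mean of $M$ i.i.d.\ realizations $\bm{x}_h^m$ of $\bm{x}_h$. Then
$$\Vert \mathbb{E}[\bm{x}] - \hat{\bm{x}}_{h,M}\Vert_{\bm{X}} \;\le\; \Vert \mathbb{E}[\bm{x}] - \mathbb{E}[\bm{x}_h]\Vert_{\bm{X}} \;+\; \Vert \mathbb{E}[\bm{x}_h] - \hat{\bm{x}}_{h,M}\Vert_{\bm{X}},$$
where the first term will carry the $O(h)$ rate and the second the $O(1/\sqrt M)$ rate, the latter understood in the root-mean-square sense over the independent samples.

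For the finite element term I would argue pathwise. For a.e.\ fixed $\omega\in\Omega$, Assumption \ref{ass3.1} gives $K_{\min}\le K(\omega,\cdot)\le K_{\max}$ uniformly in $\omega$, so, using the smallness of the Beavers--Joseph coefficient $\alpha$ exactly as in Lemma \ref{le3.1}, the bilinear form $a(\cdot,\cdot)$ is coercive on the kernel of $b(\cdot,\cdot)$ and the discrete velocity--pressure pair satisfies an inf--sup condition, with constants depending only on $K_{\min},K_{\max},\alpha$ and not on $\omega$ or $h$. A Babu\v{s}ka--Brezzi/C\'ea quasi-optimality estimate then bounds $\Vert \bm{x}(\omega,\cdot) - \bm{x}_h(\omega,\cdot)\Vert_{\bm{X}}$ by the best approximation error, and the standard piecewise-linear interpolation estimate together with the uniform $\bm{H}^2$ elliptic regularity of the coupled solution (valid since $D_p,D_f$ are smooth/convex and the bounds on $K$ are uniform) yields $\Vert \bm{x}(\omega,\cdot)-\bm{x}_h(\omega,\cdot)\Vert_{\bm{X}} \le C\,h\,\Vert \bm{x}(\omega,\cdot)\Vert_{\bm{H}^2}$. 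Moving the expectation outside the norm by the triangle inequality, $\Vert \mathbb{E}[\bm{x}]-\mathbb{E}[\bm{x}_h]\Vert_{\bm{X}} \le \Vert \bm{x}-\bm{x}_h\Vert_{\bm{X}}$, and absorbing the $\bm{H}^2$-norm of $\bm{x}$ in the Bochner space into the constant gives the $C_1 h$ contribution.

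For the Monte Carlo term, $\hat{\bm{x}}_{h,M} = \tfrac1M\sum_{m=1}^M \bm{x}_h^m$ is an unbiased estimator of $\mathbb{E}[\bm{x}_h]$ because the $\bm{x}_h^m$ are i.i.d.\ copies of $\bm{x}_h$; a direct variance computation gives
$$\mathbb{E}\,\big\Vert \mathbb{E}[\bm{x}_h] - \hat{\bm{x}}_{h,M}\big\Vert_{\bm{X}}^2 \;=\; \frac{1}{M}\,\mathrm{Var}\big(\bm{x}_h\big) \;\le\; \frac{1}{M}\,\mathbb{E}\big\Vert \bm{x}_h\big\Vert_{\bm{X}}^2 \;\le\; \frac{C}{M},$$
where finiteness of $\mathbb{E}\Vert \bm{x}_h\Vert_{\bm{X}}^2$ follows from the boundedness in Lemma \ref{le3.1} transferred to the discrete level through the uniform discrete stability constant. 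Taking square roots produces the $O(1/\sqrt M)$ contribution, and combining the two estimates and renaming the constant as $C_1$ (depending on $\bm{x}$ through its $\bm{H}^2$-Bochner norm and its variance) completes the argument.

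The part I expect to require the most care is not the splitting, which is routine, but the verification that every constant entering the finite element estimate --- the coercivity-on-the-kernel constant, the discrete inf--sup constant, and the $\bm{H}^2$ regularity constant for the coupled Stokes--Darcy operator --- depends on the random conductivity only through $K_{\min}$ and $K_{\max}$ (and on the fixed $\alpha$). This uniformity is exactly what lets the pathwise bounds integrate cleanly against $\mathcal{P}$; since it is precisely the content cited from \cite{yang2022multigrid}, I would lean on that reference for the pathwise discrete analysis and concentrate the remaining effort on the expectation/variance bookkeeping above.
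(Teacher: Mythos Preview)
The paper does not actually prove Theorem~\ref{th4.5}; it is stated as a cited result from \cite{yang2022multigrid} and used as a black box in the subsequent error analysis. Your decomposition into a deterministic FE error (controlled pathwise via C\'ea/Babu\v{s}ka--Brezzi with $\omega$-uniform constants, then integrated) plus a Monte Carlo variance term is the standard route for this kind of estimate and is precisely the argument one finds in the cited reference, so your proposal is correct and aligned with the intended proof. The only point that deserves a sentence of clarification is the meaning of $\Vert\cdot\Vert_{\bm{X}}$ on the left-hand side: since $\mathbb{E}[\bm{x}]$ is deterministic while $\hat{\bm{x}}_{h,M}$ depends on the $M$ samples, the bound is implicitly in the $L^2$ sense over the product probability space of the samples, which is exactly how you interpret it when you write ``root-mean-square sense'' for the second term.
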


To estimate the generalized low-rank approximation error for our low-rank solver in Algorithm \ref{alg2}, we first recall the following lemma. 

\begin{lemma}[\cite{wedin1973perturbation}] \label{le4.3}
If $\text{rank}(\mathbf{A}) = \text{rank}(\mathbf{B})$ holds for matrices $\mathbf{A}, \mathbf{B} \in \mathbb{R}^{N \times N}$, then we have

\begin{equation} 
	\Vert \mathbf{A}^{-1} - \mathbf{B}^{-1} \Vert \: \le \: \gamma \; \Vert \mathbf{A}^{-1} \Vert_2 \Vert \mathbf{B}^{-1} \Vert_2 \Vert \mathbf{A} - \mathbf{B}\Vert,
	\nonumber
\end{equation}

\noindent where $\gamma$ is a constant and $\Vert \cdot \Vert$ denotes any norm.

\end{lemma}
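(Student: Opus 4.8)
The plan is to obtain the bound in two elementary moves: first prove the sharp version in the spectral norm using the standard matrix identity for the difference of two inverses, and then transfer the estimate to an arbitrary norm via the equivalence of norms on the finite-dimensional space $\mathbb{R}^{N\times N}$. Under the hypothesis, and in the way the lemma is applied here (with $\mathbf{A}=\bar{\mathbf{A}}$ and $\mathbf{B}=\bar{\mathbf{A}}+\mathbf{U}\mathbf{V}_m^T$), both matrices are genuinely invertible, so $\mathbf{A}^{-1}$ and $\mathbf{B}^{-1}$ are true inverses and the condition $\text{rank}(\mathbf{A})=\text{rank}(\mathbf{B})$ simply reads $N=N$.

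First I would start from the identity $\mathbf{A}^{-1}-\mathbf{B}^{-1} = -\,\mathbf{A}^{-1}(\mathbf{A}-\mathbf{B})\mathbf{B}^{-1}$, which is verified in one line by expanding $\mathbf{A}^{-1}(\mathbf{A}-\mathbf{B})\mathbf{B}^{-1} = \mathbf{B}^{-1}-\mathbf{A}^{-1}$. Taking the operator $2$-norm of both sides and using its submultiplicativity gives the clean estimate $\Vert \mathbf{A}^{-1}-\mathbf{B}^{-1}\Vert_2 \le \Vert\mathbf{A}^{-1}\Vert_2\,\Vert\mathbf{A}-\mathbf{B}\Vert_2\,\Vert\mathbf{B}^{-1}\Vert_2$, i.e.\ the desired inequality with $\gamma=1$ in the spectral norm. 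Then, to reach a general norm $\Vert\cdot\Vert$, I would invoke the equivalence of norms on $\mathbb{R}^{N\times N}$: there exist constants $c_1,c_2>0$, depending only on $N$ and on the chosen norm, with $\Vert M\Vert\le c_1\Vert M\Vert_2$ and $\Vert M\Vert_2\le c_2\Vert M\Vert$ for all $M$. Applying the first inequality to $M=\mathbf{A}^{-1}-\mathbf{B}^{-1}$, the second to $M=\mathbf{A}-\mathbf{B}$, and chaining with the spectral-norm bound yields $\Vert\mathbf{A}^{-1}-\mathbf{B}^{-1}\Vert\le c_1c_2\,\Vert\mathbf{A}^{-1}\Vert_2\,\Vert\mathbf{B}^{-1}\Vert_2\,\Vert\mathbf{A}-\mathbf{B}\Vert$; setting $\gamma:=c_1c_2$ concludes the argument.

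The main subtlety worth flagging is the meaning of the rank hypothesis. For honestly invertible matrices it is automatic and plays no active role, so the three-line argument above is all that is needed for our purposes. It becomes essential only if one wants the lemma in the rank-deficient regime with $\mathbf{A}^{-1},\mathbf{B}^{-1}$ standing for Moore–Penrose pseudoinverses: then the identity of the first step fails, $\mathbf{A}^{\dagger}-\mathbf{B}^{\dagger}$ acquires additional terms built from the projectors onto the (now distinct) ranges and kernels, and one must appeal to Wedin's acute-perturbation analysis, in which $\text{rank}(\mathbf{A})=\text{rank}(\mathbf{B})$ is precisely the condition that preserves a multiplicative bound of this shape. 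Since Algorithm \ref{alg2} only ever inverts the nonsingular stiffness matrices, I would note this briefly and rely on \cite{wedin1973perturbation} for the general statement. I would also remark that $\gamma$ carries a dimension dependence through $c_1,c_2$, so the constant is $N$-independent exactly when $\Vert\cdot\Vert$ is itself the spectral norm.
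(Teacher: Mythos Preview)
The paper does not prove this lemma at all; it is stated with a citation to \cite{wedin1973perturbation} and used as a black box in the proof of Theorem~\ref{th4.6}. Your argument is correct for the invertible case that is actually needed downstream (where $\mathbf{A}_m$ and $\mathbf{B}_m=\bar{\mathbf{A}}+\mathbf{U}\mathbf{V}_m^T$ are nonsingular stiffness matrices), and you correctly identify that the rank hypothesis only becomes active in Wedin's full pseudoinverse setting, which the paper does not enter.
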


Owing to the conclusions from Theorem \ref{th4.4} and Lemma \ref{le4.3}, we can derive the error estimates of approximating the low-rank matrices as below. 

\begin{theorem}[Generalized low-rank matrix approximation error] \label{th4.6}

Let $\hat{\bm{x}}_{h,M,\theta}$ denote the numerical approximation of the QoI from the low-rank solver in Algorithm \ref{alg2}. Then it satisfies the following error estimate of

\begin{equation} 
	\Vert \hat{\bm{x}}_{h,M} - \hat{\bm{x}}_{h,M,\theta} \Vert_{\bm{X}} \: \le \: C_2 \frac{1}{\sqrt{M}}RMSRE(\theta),
	\nonumber
\end{equation}

\noindent where $M$ is the MC sample size, and $RMSRE$ achieved by Algorithm \ref{alg1} is related to the data compression ratio $\theta$.
\end{theorem}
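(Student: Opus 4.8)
The plan is to split the total error into per-sample linear-algebra contributions and then aggregate them as a Monte Carlo average over the $M$ realizations. First I would use the definitions (\ref{eq3.13}) and (\ref{eq4.10}) to write
\begin{equation}
\hat{\bm{x}}_{h,M} - \hat{\bm{x}}_{h,M,\theta} \: = \: \frac{1}{M}\sum_{m=1}^M \sum_{j=1}^N \left( \bm{x}_h^m - \bm{x}_{h,\theta}^m\right)_j \zeta_j(x),
\nonumber
\end{equation}
so that it suffices to control the coefficient-vector errors $\bm{e}_m := \bm{x}_h^m - \bm{x}_{h,\theta}^m$. By (\ref{eq3.12}) and (\ref{eq4.8}) these satisfy $\bm{e}_m = \bigl[(\bar{\mathbf{A}}+\widetilde{\mathbf{A}}_m)^{-1} - (\bar{\mathbf{A}}+\mathbf{U}\mathbf{V}_m^T)^{-1}\bigr]\bm{b}$, and a finite-element function with coefficient vector $\bm{e}_m$ obeys a norm equivalence $\Vert \sum_j (\bm{e}_m)_j \zeta_j\Vert_{\bm{X}} \le C_{\mathrm{FE}}\Vert \bm{e}_m\Vert_2$ on the quasi-uniform mesh $\mathcal{T}_h$, with the (possibly $h$-dependent) constant absorbed into $C_2$, exactly as $C_1$ in Theorem \ref{th4.5} is allowed to depend on the data and the mesh.

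For the per-sample estimate I would invoke Lemma \ref{le4.3} applied to the pair $\mathbf{A}_m = \bar{\mathbf{A}}+\widetilde{\mathbf{A}}_m$ and $\bar{\mathbf{A}}+\mathbf{U}\mathbf{V}_m^T$. Its rank hypothesis is met because, after fixing the pressure up to its additive constant, the discrete well-posedness inherited from Lemma \ref{le3.1} makes $\mathbf{A}_m$ invertible, while the data-compression ratio $\theta$ is chosen (cf. Remark \ref{rmk4.4}) so that the generalized low-rank surrogate $\bar{\mathbf{A}}+\mathbf{U}\mathbf{V}_m^T$ remains invertible and of the same rank. Taking the Frobenius norm in Lemma \ref{le4.3} and using $\Vert\cdot\Vert_2 \le \Vert\cdot\Vert_F$ yields
\begin{equation}
\Vert \bm{e}_m\Vert_2 \: \le \: \gamma \, \Vert (\bar{\mathbf{A}}+\widetilde{\mathbf{A}}_m)^{-1}\Vert_2 \, \Vert (\bar{\mathbf{A}}+\mathbf{U}\mathbf{V}_m^T)^{-1}\Vert_2 \, \Vert \widetilde{\mathbf{A}}_m - \mathbf{U}\mathbf{V}_m^T\Vert_F \, \Vert\bm{b}\Vert_2 .
\nonumber
\end{equation}
By Assumption \ref{ass3.1} together with the boundedness part of Lemma \ref{le3.1}, the two inverse-norm factors are bounded uniformly in $m$, and $\Vert\bm{b}\Vert_2$ is fixed, so $\Vert\bm{e}_m\Vert_2 \le C\, a_m$ with $a_m := \Vert \widetilde{\mathbf{A}}_m - \mathbf{U}\mathbf{V}_m^T\Vert_F$.

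Finally I would aggregate over the samples. Since $\{Y_t(\omega)\}$ are i.i.d. and $\widetilde{K}_T$ has zero mean, the per-sample errors $\bm{e}_m$ behave, to leading order, as independent mean-zero contributions, so the Monte Carlo variance estimate applies and
\begin{equation}
\Vert \hat{\bm{x}}_{h,M} - \hat{\bm{x}}_{h,M,\theta}\Vert_{\bm{X}}^2 \: \le \: \frac{1}{M^2}\sum_{m=1}^M \Bigl\Vert \sum_{j}(\bm{e}_m)_j\zeta_j\Bigr\Vert_{\bm{X}}^2 \: \le \: \frac{C^2 C_{\mathrm{FE}}^2}{M^2}\sum_{m=1}^M a_m^2 \: = \: \frac{C^2 C_{\mathrm{FE}}^2}{M}\,RMSRE(\theta)^2 ,
\nonumber
\end{equation}
where the last equality is the definition (\ref{eq4.12}) of $RMSRE(\theta)$; taking square roots gives the claim with $C_2 = C\,C_{\mathrm{FE}}$. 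I expect the main obstacle to be twofold: first, showing that the generalized low-rank truncation does not spoil the discrete inf--sup/coercivity structure underlying Lemma \ref{le3.1}, so that $\Vert(\bar{\mathbf{A}}+\mathbf{U}\mathbf{V}_m^T)^{-1}\Vert_2$ is uniformly bounded in $m$ and the rank hypothesis of Lemma \ref{le4.3} holds; second, justifying the $M^{-1/2}$ gain, since a bare triangle inequality plus Cauchy--Schwarz only delivers the weaker bound $C\,RMSRE(\theta)$, and the improvement to $M^{-1/2}RMSRE(\theta)$ relies on treating the sample-wise errors as essentially independent and mean-zero, in the same spirit as the $M^{-1/2}$ term already present in Theorem \ref{th4.5}.
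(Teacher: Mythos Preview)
Your approach is essentially the same as the paper's: split into per-sample errors, apply Lemma \ref{le4.3} to bound $\Vert\mathbf{A}_m^{-1}-(\bar{\mathbf{A}}+\mathbf{U}\mathbf{V}_m^T)^{-1}\Vert$ by the reconstruction error $\Vert\widetilde{\mathbf{A}}_m-\mathbf{U}\mathbf{V}_m^T\Vert_F$, and then aggregate so that the sum $\frac{1}{M}\sum_m\Vert\widetilde{\mathbf{A}}_m-\mathbf{U}\mathbf{V}_m^T\Vert_F^2$ becomes $RMSRE(\theta)^2$.

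The two obstacles you flag are handled less carefully in the paper than in your proposal. For the $M^{-1/2}$ gain, the paper simply writes $\Vert\frac{1}{M}\sum_m(\bm{x}_h^m-\bm{x}_{h,\theta}^m)\Vert_{\bm{X}}^2 \le \frac{1}{M^2}\sum_m\Vert\bm{x}_h^m-\bm{x}_{h,\theta}^m\Vert_{\bm{X}}^2$ and attributes this to ``Jensen's inequality''; as you correctly note, Jensen (or the triangle inequality plus Cauchy--Schwarz) only yields $\frac{1}{M}\sum_m$, so the extra $1/M$ is not justified there either. For the inverse-norm bounds and the rank hypothesis of Lemma \ref{le4.3}, the paper does not argue uniformity at all: it sets $C_2=\gamma\Vert\bm{b}\Vert\,\Vert\zeta(x)\Vert_{\bm{X}}\,\Vert\mathbf{A}_m^{-1}\Vert_2\,\Vert\mathbf{B}_m^{-1}\Vert_2$ with the subscript $m$ still present. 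In short, you have reproduced the paper's argument and identified its genuine soft spots more explicitly than the paper itself does.
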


\begin{proof}

For notational simplicity, we define $\mathbf{B}^m := \bar{\mathbf{A}} + \mathbf{U} \mathbf{V}_m^T$. According to Jensen's inequality, Lemma \ref{le4.3} and properties of matrix norm, the left-hand side becomes

\begin{align*}
	\Vert \hat{\bm{x}}_{h,M}& - \hat{\bm{x}}_{h,M,\theta} \Vert_{\bm{X}}^2 \:  = \: \Vert \frac{1}{M} \displaystyle \sum_{m=1}^M (u_h^m(x)) - u_{h,\theta}^m(x))\Vert_{\bm{X}}^2 \\
	& \le \: \frac{1}{M^2} \displaystyle \sum_{m=1}^M\Vert u_h^m(x)) - u_{h,\theta}^m(x) \Vert_{\bm{X}}^2 \: = \: \frac{1}{M^2} \displaystyle \sum_{m=1}^M \Vert \left( \mathbf{A}_m^{-1} - \mathbf{B}_m^{-1} \right)  \bm{b} \; \zeta(x) \Vert_{\bm{X}}^2\\
	& \le \: \frac{1}{M^2}  \Vert \bm{b} \Vert^2 \Vert \zeta(x) \Vert_{\bm{X}}^2\; \displaystyle \sum_{m=1}^M \Vert \mathbf{A}_m^{-1} - \mathbf{B}_m^{-1} \Vert_2^2 \\
	& \le \: \frac{1}{M^2} \gamma^2 \Vert \bm{b} \Vert^2 \Vert \zeta(x) \Vert_{\bm{X}}^2 \Vert \mathbf{A}_m^{-1} \Vert_2^2 \Vert \mathbf{B}_m^{-1} \Vert_2^2 \displaystyle \sum_{m=1}^M \Vert \mathbf{A}_m - \mathbf{B}_m \Vert_2^2\\
	& \le \: \frac{1}{M^2} \gamma^2 \Vert \bm{b} \Vert^2 \Vert \zeta(x) \Vert_{\bm{X}}^2 \Vert \mathbf{A}_m^{-1} \Vert_2^2 \Vert \mathbf{B}_m^{-1} \Vert_2^2 \displaystyle \sum_{m=1}^M \Vert \widetilde{\mathbf{A}}_m - \mathbf{U}\mathbf{V}_m^T \Vert_F^2\\
	& = \: \frac{1}{M} \gamma^2 \Vert \bm{b} \Vert^2 \Vert \zeta(x) \Vert_{\bm{X}}^2 \Vert \mathbf{A}_m^{-1} \Vert_2^2 \Vert \mathbf{B}_m^{-1} \Vert_2^2 \left( \frac{1}{M} \displaystyle \sum_{m=1}^M \Vert \widetilde{\mathbf{A}}_m - \mathbf{U}\mathbf{V}_m^T \Vert_F^2\right) \\
	& = \: \frac{1}{M} \gamma^2 \Vert \bm{b} \Vert^2 \Vert \zeta(x) \Vert_{\bm{X}}^2 \Vert \mathbf{A}_m^{-1} \Vert_2^2 \Vert \mathbf{B}_m^{-1} \Vert_2^2 \left( RMSRE\right)^2. \\
\end{align*}

\noindent where $\phi(\bm{x})$ is the finite element basis function. The last two lines hold due to the definition of RMSRE. Thus, we obtain  

\begin{equation}
	\begin{aligned}
		\Vert \hat{\bm{x}}_{h,M} - \hat{\bm{x}}_{h,M,\theta} \Vert_{\bm{X}} \: &\le \: \left( \Vert \hat{\bm{x}}_{h,M} - \hat{\bm{x}}_{h,M,\theta} \Vert_{\bm{X}}^2 \right)^{\frac{1}{2}}\\
		& \le \: C_2 \: \frac{1}{\sqrt{M}} \: RMSRE(\theta), \\
		\nonumber
	\end{aligned}
\end{equation}

\noindent where $C_2:=\gamma \Vert \bm{b} \Vert \Vert \zeta(x) \Vert_{\bm{X}} \Vert \mathbf{A}_m^{-1} \Vert_2 \Vert \mathbf{B}_m^{-1} \Vert_2$ is a constant. In addition, by Theorem \ref{th4.4}, the RMSRE value achieved by the novel generalized low-rank approximation of matrices in Algorithm \ref{alg1} depends on the data compression ratio $\theta$.
\end{proof}

Finally, according to the estimations from Theorem \ref{th4.5} and \ref{th4.6}, we have the following total error analysis of our low-rank solver to the Stokes-Darcy model with random hydraulic conductivity and BJ condition in Algorithm \ref{alg2}. 

\begin{theorem}[Total error estimation of Algorithm \ref{alg2}] \label{th4.7}

For any $M \in \mathbb{N}$ and $\theta \in (0,1)$, the total error of the low-rank solver in Algorithm \ref{alg2} satisfies the following estimate:

\begin{equation} 
	\Vert \mathbb{E}[\bm{x}] - \hat{\bm{x}}_{h,M,\theta} \Vert_{\bm{X}} \: = \: \mathcal{O}(h)  +  \mathcal{O}\left( \frac{1}{\sqrt{M}}\right) + \mathcal{O}\left(\frac{1}{\sqrt{M}}RMSRE(\theta)\right),
	\nonumber
\end{equation}

\noindent where $h$ is the FE mesh size, $M$ is the number of MC realizations, and RMSRE refers to the root mean square reconstruction error given by (\ref{eq4.12}).
\end{theorem}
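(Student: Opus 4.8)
The plan is to reduce the statement to the two error bounds already established in this section by a single triangle inequality. I would introduce the full (uncompressed) MCFEM approximation $\hat{\bm{x}}_{h,M}$ of the QoI as an intermediate quantity and split
\begin{equation}
\Vert \mathbb{E}[\bm{x}] - \hat{\bm{x}}_{h,M,\theta} \Vert_{\bm{X}} \le \Vert \mathbb{E}[\bm{x}] - \hat{\bm{x}}_{h,M} \Vert_{\bm{X}} + \Vert \hat{\bm{x}}_{h,M} - \hat{\bm{x}}_{h,M,\theta} \Vert_{\bm{X}}.
\nonumber
\end{equation}
The first term is the pure MCFEM discretization error: under Assumption \ref{ass3.1} (together with the small-$\alpha$ hypothesis of Lemma \ref{le3.1}, which guarantees the QoI is well defined), Theorem \ref{th4.5} bounds it by $C_1(h + 1/\sqrt{M})$, which I would record as $\mathcal{O}(h) + \mathcal{O}(1/\sqrt{M})$. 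The second term is the error incurred by replacing each $\widetilde{\mathbf{A}}_m$ with its generalized low-rank surrogate $\mathbf{U}\mathbf{V}_m^T$, and Theorem \ref{th4.6} bounds it by $C_2\frac{1}{\sqrt{M}}RMSRE(\theta)$, which is the third term of the claimed estimate. Summing the two bounds and absorbing $C_1$ and $C_2$ into the $\mathcal{O}$-notation yields the result.

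Since the triangle inequality and the two invoked theorems do all the work, the only point requiring care is that the implied constants be genuinely independent of the indices being summed or sent to a limit. I would check that $C_1$ in Theorem \ref{th4.5} depends only on $\bm{x}$ (not on the individual samples), and that $C_2 = \gamma \Vert \bm{b}\Vert \Vert \zeta(x)\Vert_{\bm{X}} \Vert \mathbf{A}_m^{-1}\Vert_2 \Vert \mathbf{B}_m^{-1}\Vert_2$ in Theorem \ref{th4.6} can be taken uniform in $m$: the bound $\Vert \mathbf{A}_m^{-1}\Vert_2 \le C$ follows from Assumption \ref{ass3.1} together with the uniform coercivity behind Lemma \ref{le3.1}, while $\Vert \mathbf{B}_m^{-1}\Vert_2$ is controlled by writing $\mathbf{B}_m = \mathbf{A}_m - (\widetilde{\mathbf{A}}_m - \mathbf{U}\mathbf{V}_m^T)$ and using $\Vert \widetilde{\mathbf{A}}_m - \mathbf{U}\mathbf{V}_m^T\Vert_F \le \sqrt{M}\,RMSRE(\theta)$; a short Neumann-series argument then gives $\Vert \mathbf{B}_m^{-1}\Vert_2 \le 2\Vert \mathbf{A}_m^{-1}\Vert_2$ once $RMSRE(\theta)$ is below a threshold set by $\Vert \mathbf{A}_m^{-1}\Vert_2$. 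One also needs the rank compatibility $\mathrm{rank}(\mathbf{A}_m) = \mathrm{rank}(\mathbf{B}_m)$ required by Lemma \ref{le4.3}, which holds for $\theta$ large enough that the dominant eigenspace of $\mathbf{N}$ captured in $\mathbf{U}$ recovers the column space of $\widetilde{\mathbf{A}}_m$.

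The hard part, to the extent there is one, is therefore not the combination of estimates but the bookkeeping of hypotheses: ensuring simultaneously the well-posedness assumptions of Lemma \ref{le3.1}, the quasi-uniformity of $\mathcal{T}_h$ needed by Theorem \ref{th4.5}, and the rank and norm conditions behind Theorem \ref{th4.6}, all with constants that do not degrade as $M \to \infty$ or $\theta \to 0$. Once these are in place the proof itself is the two-line triangle inequality above, and I would present it as such, referring back to Theorems \ref{th4.5} and \ref{th4.6} for the constituent bounds and to Theorem \ref{th4.4} for the explicit form of $RMSRE(\theta)$ that appears in the last term.
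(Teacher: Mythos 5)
Your proposal matches the paper's proof exactly: the same triangle inequality through the intermediate MCFEM approximation $\hat{\bm{x}}_{h,M}$, with the first term bounded by Theorem \ref{th4.5} and the second by Theorem \ref{th4.6}. Your additional attention to the uniformity in $m$ of the constants in $C_2$ and to the rank hypothesis of Lemma \ref{le4.3} is more careful than the paper, which simply cites the two theorems, but the argument is the same.
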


\begin{proof}

By the triangle inequality, we have

\begin{equation}
	\begin{aligned} 
		\Vert \mathbb{E}[\bm{x}] - \hat{\bm{x}}_{h,M,\theta} \Vert_{\bm{X}} \: & \le \: \Vert \mathbb{E}[\bm{x}] - \hat{\bm{x}}_{h,M} \Vert_{\bm{X}} \: + \:\Vert \hat{\bm{x}}_{h,M} - \hat{\bm{x}}_{h,M,\theta} \Vert_{\bm{X}} \\
		& \le \: C_1 \left( h + \frac{1}{\sqrt{M}}\right)   +  C_2 \left( \frac{1}{\sqrt{M}} RMSRE(\theta)\right)  \\
		& = \: \mathcal{O}(h) \enspace + \enspace \mathcal{O}\left( \frac{1}{\sqrt{M}}\right) + \mathcal{O}\left(\frac{1}{\sqrt{M}}RMSRE(\theta)\right).
		\nonumber
	\end{aligned}
\end{equation}

\noindent According to Theorem \ref{th4.4}, the reconstruction error in the third term is related to the data compression ratio $\theta$.
\end{proof}

\section{Numerical experiments}\label{sec5}

In this section, numerical experiments on the stochastic Stokes-Darcy interface model in (\ref{eq3.2}) - (\ref{eq3.5}) are conducted to demonstrate the features of our low-rank solver in Algorithm \ref{alg2} and the theoretical conclusions. Section \ref{subsec5.1} generates the realizations of random hydraulic conductivity $\mathbb{K}$ by using the truncated KL expansion. The effectiveness of our low-rank solver is demonstrated in Section \ref{subsec5.2}. Section \ref{subsec5.3} addresses how to determine an appropriate data compression ratio $\theta$, and Section \ref{subsec5.4} validates the convergence of statistical moments of our numerical solutions when $\theta$ is properly selected.

We consider a two-dimensional domain consisting of two rectangles, where the porous media domain $D_p = (0,1) \times (0,0.5)$ and the conduit domain $D_f = (0,1) \times (-0.5,0)$ share the common interface $\Gamma_I = (0,1)\times\{0\}$. The boundaries include $\Gamma_p = \{0,1\} \times (0,0.5) \cup(0,1)\times\{0.5\}$ and $\Gamma_f = \{0,1\} \times (-0.5,0) \cup(0,1)\times\{-0.5\}$. Figure \ref{fig3.1} provides the sketch of the whole region. The physical parameters in the stochastic Stokes-Darcy model (\ref{eq3.2}) - (\ref{eq3.5}) are chosen as follows: the elevation head $z = 0$, the gravitational acceleration $g=1$, the dynamic viscosity $\nu = 1$ and the coefficient $\alpha=1$ in the BJ interface condition (\ref{ZZc}).

The source terms and boundary condition data functions are given by

\begin{equation}
\begin{aligned}
	& f_p(x) = 0, \qquad \quad\;\bm{f}_f = \bm{0},\\
	& \phi_p = 0, \qquad \qquad \:\:\,\text{on  } \Gamma_p,\\
	& \bm{u}_f = \left( 0,0\right) ^T, \qquad \text{on  }(0,1)\times\{-0.5\},\\
	& \bm{u}_f = \left( 1,0\right) ^T, \qquad \text{on  }\{0,1\} \times (-0.5,0).
\end{aligned}
\nonumber
\end{equation}

For the FEM discretization with respect to the physical space, a uniform tessellation $\mathcal{T}_h$ is employed with the grid size $h = 1/32$ and the degrees of freedom $N = 6996$. We utilize the quadratic finite elements for the primary formulation of the Darcy domain, and the Taylor-Hood elements for the Stokes domain. Gauss quadrature is applied to compute the integrals in the experiments. Specifically, we use the seven-point Gauss quadrature rule in every FE triangle of the tessellation $\mathcal{T}_h$ and the three-point Gauss quadrature rule on the interface respectively. The numerical simulations are carried out in the MATLAB R2022a software on an Apple M1 machine with 8 GB of memory.

\subsection{Realizations of the random hydraulic conductivity} \label{subsec5.1}

We first focus on the generation of MC realizations of the random hydraulic conductivity in our coupled problem, which is defined as a diagonal matrix $\mathbb{K}(\omega,x) = K(\omega,x)\mathbb{I}$. Thus, we construct the random tensor by generating $K(\omega,x)$ for $d=2$ times. The covariance function of $\mathbb{K}(\omega,x)$ is present as

\begin{equation} 
\text{Cov}(x,y)  \:= \:  \exp\left( -\frac{\vert x-y\vert^2}{0.2}\right), \quad \forall \:x,x^{\prime} \in \bar{D}_p.
\nonumber
\end{equation}

By the truncated KL expansion in (\ref{eq3.10}), the hydraulic conductivity $K(\omega,x)$ consists of the following two components

\begin{equation} 
K(\omega,x)  \:= \:\bar{K}(x) + \widetilde{K}(\omega,x),
\nonumber
\end{equation}

\noindent where the mean function is given by

\begin{equation} 
\bar{K}(x)\: = \:1, \quad \forall x \in  \bar{D}_p,
\nonumber
\end{equation}

\noindent and the perturbations are illustrated as

\begin{equation} 
\widetilde{K}(\omega,x)\:=\: \displaystyle \sum_{t=1}^{T} \sqrt{\lambda_t}\,r_t(x)\, Y_t(\omega).
\nonumber
\end{equation}

In the numerical example, $\{Y_t(\omega)\}_{t=1}^T$ are i.i.d. random variables following the standard normal distribution $\mathcal{N}(0,1)$ with the truncated interval $\left[ -3,3\right] $. By the 3-$\sigma$ rule of the standard normal distribution, approximately $99.73\%$ of the values lie within three standard deviations of the mean, i.e., in the interval of $\left[ -3,3\right] $ in this example. Therefore, the truncated distribution adopted in the simulations could not only conserve the main information and properties of the standard normal distribution but also avoid the extreme values of $Y_t$, which ensures the non-negativity of hydraulic conductivity and satisfies Assumption \ref{ass3.1}. 

Let the error tolerance of the truncated KL expansion be $\epsilon = 0.01$. Then we provide the following criterion for selecting an appropriate truncation coefficient $T$,

\begin{equation}
\rho_T\::=\: \frac{\sum\limits_{t=1}^{T}\lambda_t}{\sum\limits_{t=1}^{\infty}\lambda_t} \:\le\: 1-\epsilon,
\nonumber
\end{equation}

\noindent where $\rho_T$ is the cumulative energy ratio of the truncated KL expansion. In the simulations, we choose $T=9$ and its cumulative energy ratio reaches $\rho_9 = 99.18\%$, and Figure \ref{fig5.1} also verifies that the first 9 truncated terms could retain enough information of the whole expression of KL expansion.

\begin{figure}[htbp]

\centering
\includegraphics[width=0.9\textwidth]{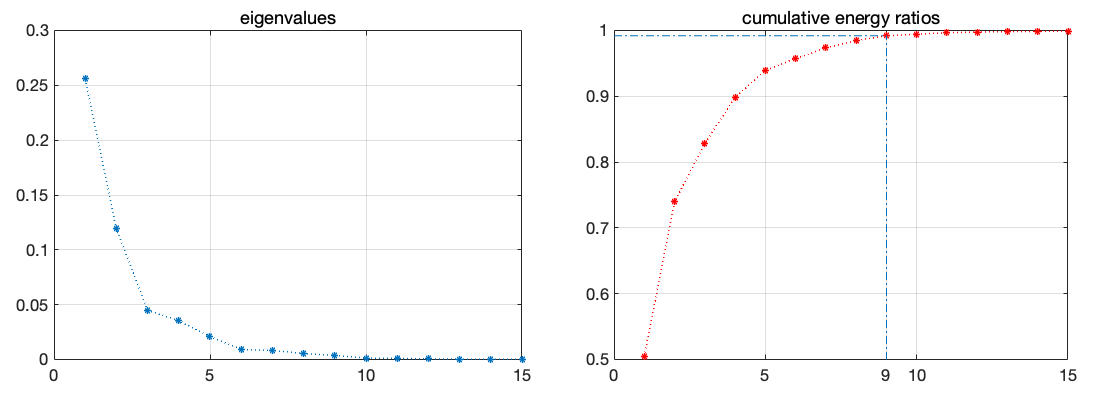}
\caption{The first 15 largest eigenvalues of the random permeability $K(\omega,x)$ (left) and their corresponding cumulative energy ratios (right).}
\label{fig5.1}
\end{figure}

We take the MC sample size $M=1800$ for the probabilistic discretization, and then $1800$ realizations of the random field in the permeability are generated in the numerical simulations. Figure \ref{fig5.2} visualizes four randomly selected realizations of $K(\omega,x)$, which exhibits the randomness of the hydraulic conductivity. 

\begin{figure}[htbp]

\centering
\includegraphics[width=0.9\textwidth]{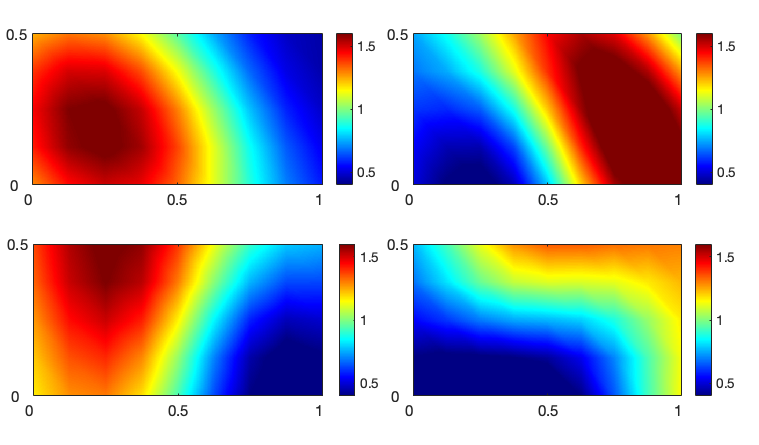}
\caption{Four randomly selected MC realizations of the hydraulic conductivity $K(\omega,x)$.}
\label{fig5.2}
\end{figure}

\subsection{Efficacy of the low-rank solver} \label{subsec5.2}

The subsection evaluates the numerical performance of the low-rank solver for the steady Stokes-Darcy interface model with random hydraulic conductivity and BJ condition in Algorithm \ref{alg2}. First, we demonstrate the numerical solutions with four randomly selected MC realizations of $K(\omega,x)$ in Figure \ref{fig5.3}, which manifests the stochastic property of the problem. Moreover, it is found that the randomness in hydraulic conductivity is transferred from the Darcy region $D_p$ to the Stokes region $D_f$ through the interface $\Gamma_I$.

\begin{figure}[htbp]

\centering
\begin{subfigure}[t]{0.45\textwidth}
	\includegraphics[width=\textwidth]{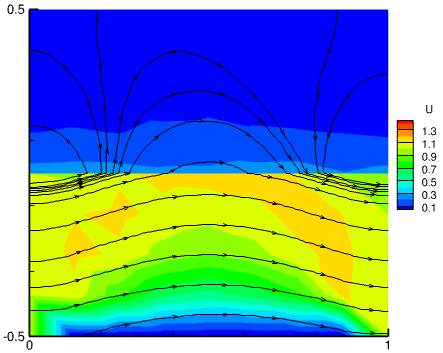}
\end{subfigure}
\hfill 
\begin{subfigure}[t]{0.45\textwidth}
	\includegraphics[width=\textwidth]{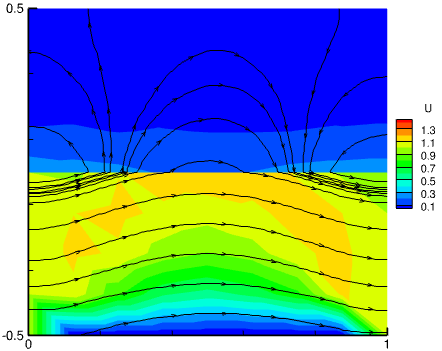}
\end{subfigure}
\\ 
\vspace{1em} 
\begin{subfigure}[t]{0.45\textwidth}
	\includegraphics[width=\textwidth]{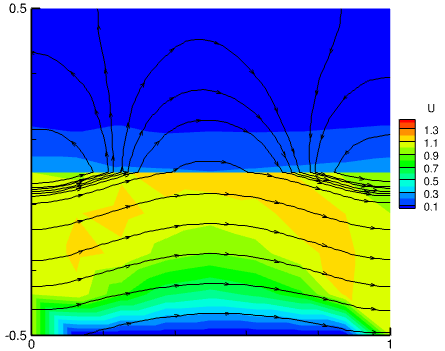}
\end{subfigure}
\hfill
\begin{subfigure}[t]{0.45\textwidth}
	\includegraphics[width=\textwidth]{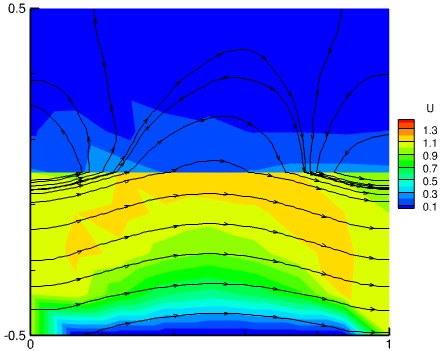}
\end{subfigure}
\caption{Four randomly selected samples of reference solution and streamlines, where color represents the velocity of flow.}
\label{fig5.3}
\end{figure}

The reference solution is generated through the standard MCFEM method with the grid size $h = 1/64$ and the number of MC realizations $M = 2000$. Figure \ref{fig5.4} compares the initial numerical solution $\bar{\bm{x}}$ with the reference numerical solution $\hat{\bm{x}}_{h,M}$, and the perturbations in the porous media domain in the right subplot presents the influence of uncertainties in the hydraulic conductivity on the Stokes-Darcy flow speed and its velocity streamlines. 

\begin{figure}[htbp] 

\begin{subfigure}[t]{0.45\textwidth}
	\includegraphics[width=\textwidth]{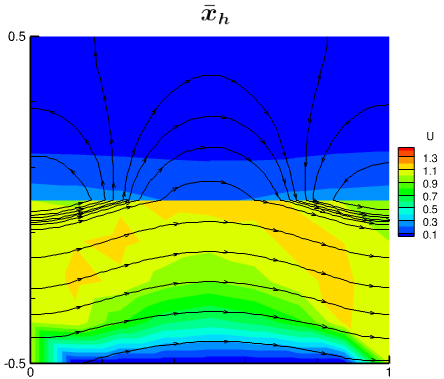}
\end{subfigure}
\hfill 
\begin{subfigure}[t]{0.45\textwidth}
	\includegraphics[width=\textwidth]{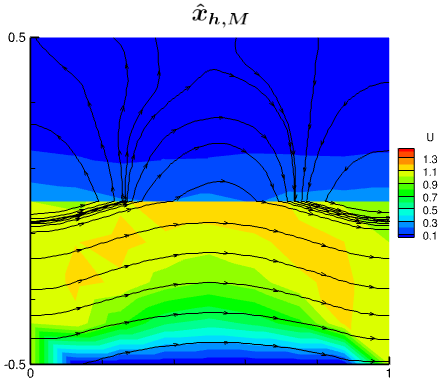}
\end{subfigure}
\caption{The initial solution $\bar{\bm{x}}_h$ and streamlines with deterministic hydraulic conductivity $\bar{K}(x)$ (left), and the reference solution $\hat{\bm{x}}_{h,M}$ and streamlines with stochastic hydraulic conductivity $\bar{K}(x) + \widetilde{K}(\omega,x)$ (right) of the steady Stokes-Darcy interface model.}
\label{fig5.4}
\end{figure}

Then we evaluate the accuracy and efficiency of the low-rank solver in Algorithm \ref{alg2} under different choices of data compression ratios. We employ six different $\theta = 100\%,70\%,50\%,30\%,10\%,5\%$ and summarize the experiment outputs in Figure \ref{fig5.5} and Table \ref{tab5.1}. When selecting a proper data compression ratio, say $30\%$ here, the low-rank solver can well capture the perturbations in the Stokes-Darcy solution caused by uncertainties in the hydraulic conductivity, produce a high-precision numerical solution, and in the meantime, has relatively low CPU elapsed time in solving large-scale linear systems. In other words, our low-rank solver can significantly reduce computational and space complexities without losing numerical accuracy, which confirms the effectiveness and validity of our low-rank solver in Algorithm \ref{alg2}.

\begin{figure}[htbp] 

\centering
\begin{subfigure}[t]{0.33\textwidth}
	\includegraphics[width=\textwidth]{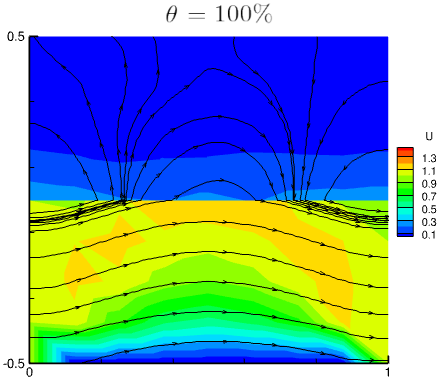}
\end{subfigure}
\hfill
\begin{subfigure}[t]{0.33\textwidth}
	\includegraphics[width=\textwidth]{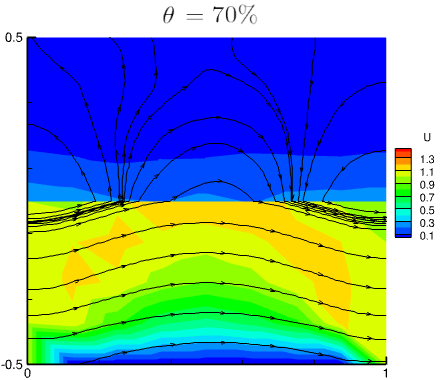}
\end{subfigure}
\hfill
\begin{subfigure}[t]{0.33\textwidth}
	\includegraphics[width=\textwidth]{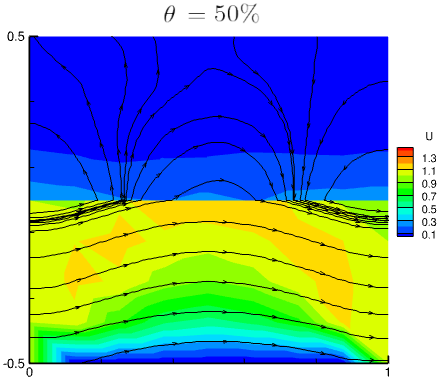}
\end{subfigure}
\\ 
\vspace{1em} 
\begin{subfigure}[t]{0.33\textwidth}
	\includegraphics[width=\textwidth]{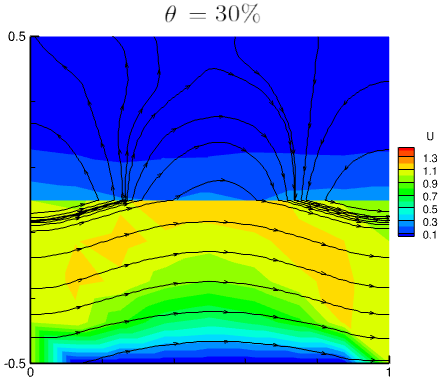}
\end{subfigure}
\hfill
\begin{subfigure}[t]{0.33\textwidth}
	\includegraphics[width=\textwidth]{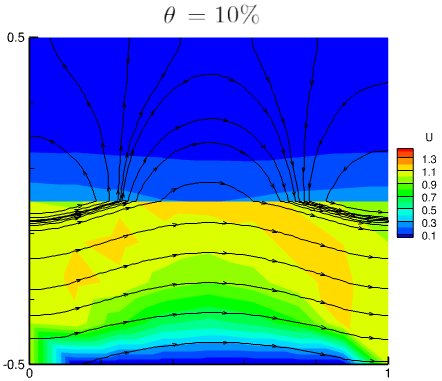}
\end{subfigure}
\hfill
\begin{subfigure}[t]{0.33\textwidth}
	\includegraphics[width=\textwidth]{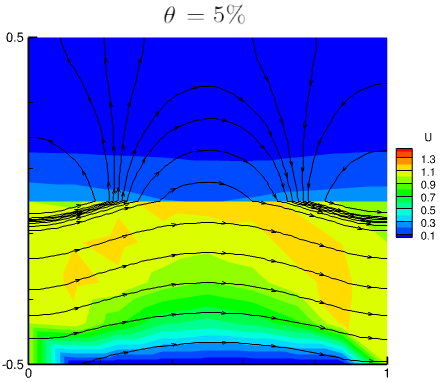}
\end{subfigure}
\caption{Numerical expectation $\hat{\bm{x}}_{h,M,\theta}$ of speed and streamlines by the low-rank solver in Algorithm \ref{alg2} under six different data compression ratios $\theta = 100\%,70\%,50\%,30\%,10\%,5\%$.}
\label{fig5.5}
\end{figure}

The outputs in Table \ref{tab5.1} also reveal the relationships among the data compression ratio $\theta$, numerical accuracy, and computational complexity. As the data compression ratio increases, our low-rank solver yields higher-precision numerical solutions. Conversely, choosing a small value of $\theta$ reduces both numerical efficiency and memory usage for the low-rank solver, which corresponds to the conclusions from Theorem \ref{th4.7} and Table \ref{tab4.1}.

In addition, Table \ref{tab5.1} presents that the errors of $\bm{u}_f, p_f$ in the free flow domain $D_f$ are notably lower than those of $\phi_p, u_p$ in the porous media domain $D_p$. The interesting observation can be attributed to the settings in the stochastic coupled model that $K(\omega,x)$ serves as the parameter in $D_p$ and on $\Gamma_I$, while the randomness is only transferred from $\Gamma_I$ to $D_f$.

\begin{table}[htbp]
\centering 
\caption{Simulation results for Algorithm \ref{alg2} about the reduced dimensions, total errors, velocity errors of Darcy and Stokes flows, and the tic-toc time for Step 2 under six different data compression ratios $\theta = 100\%,70\%,50\%,30\%,10\%,5\%$.}
\resizebox{\textwidth}{!}{
	\begin{tabular}{cccccccc}
		\hline
		Data compression ratio $\theta$ & $100\%$ & $70\%$ & $50\%$ & $30\%$ & $10\%$ & $5\%$ \\\hline
		Reduced dimension $k$ & $6996$ & $4897$ & $3498$ & $2099$ & $700$ & $350$\\
		Total error & $2.1467 \times 10^{-4}$ & $2.1472 \times 10^{-4}$ & $2.1479 \times 10^{-4}$ & $2.1485 \times 10^{-4}$ &  $3.7043 \times 10^{-1}$ & $3.7086 \times 10^{-1}$ \\
		Darcy error & $1.7211 \times 10^{-4}$ & $1.7215 \times 10^{-4}$  & $1.7223 \times 10^{-4}$ & $1.7228 \times 10^{-4}$ & $3.6357 \times 10^{-1}$ & $3.6397 \times 10^{-1}$\\
		Stokes error & $1.2831 \times 10^{-4}$ & $1.2833 \times 10^{-4}$ & $1.2834 \times 10^{-4}$ & $1.2837 \times 10^{-4}$ & $7.0972 \times 10^{-2}$ & $7.1135 \times 10^{-2}$\\
		Tic-toc time for the low-rank solver (s) & $41864.9126$ & $26199.0623$ & $18818.2782$ & $10441.1092$ & $4278.5941$ & $2352.8081$\\\hline          
\end{tabular}}
\label{tab5.1}
\end{table}

\subsection{Determination of the data compression ratio \texorpdfstring{$theta$}{Lg}} \label{subsec5.3}

How to select a suitable data compression ratio $\theta$ is a critical challenge for the numerical implementation of our low-rank solver in Algorithm \ref{alg2}. When adopting a relatively large data compression ratio, the low-rank solver yields a high-precision numerical solution, while a small $\theta$ could significantly reduce the requirements for both CPU memory and computational complexity. Therefore, it is important to balance this trade-off and make full use of the nice properties of our algorithm. In the subsection, we design a practical strategy for determining the optimal data compression ratio $\theta$.

The goal is to capture as much information from the stiffness matrices $\widetilde{\mathbf{A}}_m$ as possible through our generalized low-rank representation, thereby minimizing the reconstruction error $RMSRE(\theta)$ in (\ref{eq4.12}). To determine the optimal data compression ratio $\theta$, we can leverage the relationship established in Remark \ref{rmk4.4}, which states that $RMSRE(\theta)$ decreases as the cumulative energy ratio of the matrix $\mathbf{N} = \displaystyle \sum_{m=1}^M \widetilde{\mathbf{A}}_m \widetilde{\mathbf{A}}_m^T$ becomes larger. Therefore, it is required that the cumulative energy ratio $e(\theta)$, which is defined as below, should approach the ratio $1$:

\begin{equation}
e(\theta) \: = \: \frac{\displaystyle \sum_{i=1}^k \lambda_i(\mathbf{N})}{\displaystyle \sum_{i=1}^N \lambda_i(\mathbf{N})},
\nonumber
\end{equation}

\noindent where $\lambda_i(\mathbf{N})$ represents the $i$-th dominant eigenvalue of $\mathbf{N}$.

According to the simulation results in Figure \ref{fig5.6}, the optimal choices of the data compression ratio and the reduced dimension are $\theta = 29.72\%$ and $k = 2079$, respectively. In the numerical experiment, the matrix $\mathbf{N}$ has the dimension $N = 6996$ and the rank $N_1 = 2145$. By the property of eigenvalues, we have $\lambda_i(\mathbf{N})=0$ for $i = 2146,\cdots,6996$. The left subplot in Figure \ref{fig5.6} further implies that for $i = 2080,\cdots,2145$, the eigenvalues are of a relatively low magnitude. Specifically, we have $\lambda_{2079}(\mathbf{N})= 1.1051 \times 10^{-3}, \lambda_{2080}(\mathbf{N})= 1.2185 \times 10^{-13}$, and then the corresponding cumulative energy ratio $e(29.72\%)$ is extremely close to $1$. Therefore, when we select $\theta = 29.72\%$ and $k = 2079$, our low-rank solver can produce a high-precision numerical solution, and in the meantime, it significantly reduces the storage requirements and computational complexity associated with matrix inversion.

\begin{figure}[htbp]

\centering
\includegraphics[width=0.9\textwidth]{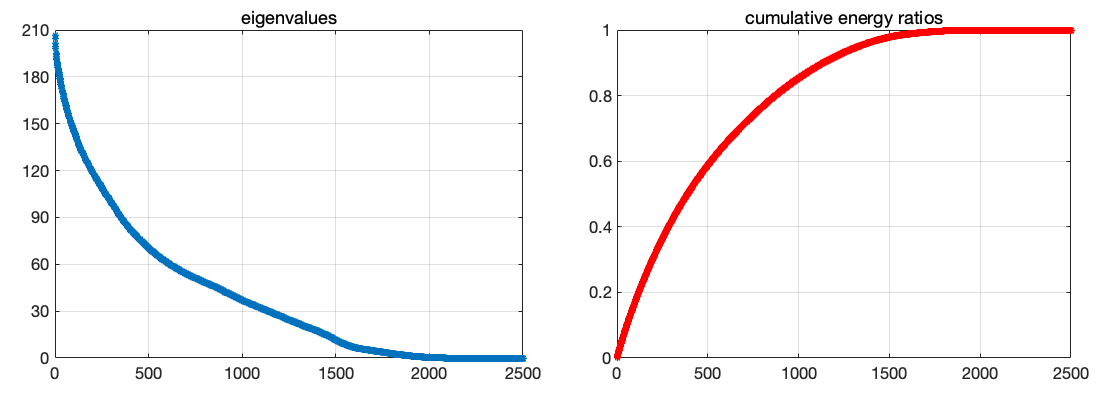}
\caption{The first 2500 largest eigenvalues of the matrix $\mathbf{N} = \displaystyle \sum_{m=1}^M \widetilde{\mathbf{A}}_m \widetilde{\mathbf{A}}_m^T$ (left) and their corresponding cumulative energy ratios (right).}
\label{fig5.6}
\end{figure}

\subsection{Convergence of statistical moments} \label{subsec5.4}

Finally, we validate the convergence of statistical moments for numerical solutions computed by our low-rank solver in Algorithm \ref{alg2}. Figure \ref{fig5.7} depicts the expectation error $\Vert \hat{\bm{x}}_{M_{ref}} - \hat{\bm{x}}_{M,\theta}\Vert_{\bm{X}}$ and variance error $\Vert \mathbb{V}[\hat{\bm{x}}_{M_{ref}}] - \mathbb{V}[\hat{\bm{x}}_{M,\theta}]\Vert_{\bm{X}}$, where the reference moments are generated using the standard MCFEM method with $h = 1/64$ and $M_{ref} = 2000$:

$$\hat{\bm{x}}_{M_{ref}} := \frac{1}{M_{ref}} \displaystyle \sum_{m=1}^{M_{ref}} \bm{x}^m, \quad \mathbb{V}[\hat{\bm{x}}_{M_{ref}}] := \frac{1}{M_{ref}} \displaystyle \sum_{m=1}^{M_{ref}} \left(\bm{x}^m - \hat{\bm{x}}_{M_{ref}}\right)^2,$$

\noindent and the expectation and variance are computed by Algorithm \ref{alg2} under the same grid size $h = 1/64$, varying sample sizes $M = 300,600,900,1200,1500$, and the data compression ratio $\theta = 30\%$:

$$\hat{\bm{x}}_{M,\theta} := \frac{1}{M} \displaystyle \sum_{m=1}^{M} \bm{x}_{\theta}^m, \quad \mathbb{V}[\hat{\bm{x}}_{M,\theta}] := \frac{1}{M} \displaystyle \sum_{m=1}^M \left(\bm{x}_{\theta}^m - \hat{\bm{x}}_{M_{ref}}\right)^2.$$

Figure \ref{fig5.7} demonstrates that both $\hat{\bm{x}}_{M,\theta}$,$\mathbb{V}[\hat{\bm{x}}_{M,\theta}]$ converge to their respective reference moments $\hat{\bm{x}}_{M_{ref}}$,$\mathbb{V}[\hat{\bm{x}}_{M_{ref}}]$ as MC sample size $M$ increases. These results confirm that our low-rank solver in Algorithm \ref{alg2} could preserve statistical accuracy when an appropriate data compression ratio $\theta$ is selected. Conversely, if adopting a too small $\theta$, the generalized low-rank matrix approximation error will dominate the total error of Algorithm \ref{alg2}, preventing the probabilistic convergence from being observed.

\begin{figure}[htbp]

\centering
\includegraphics[width=0.9\textwidth]{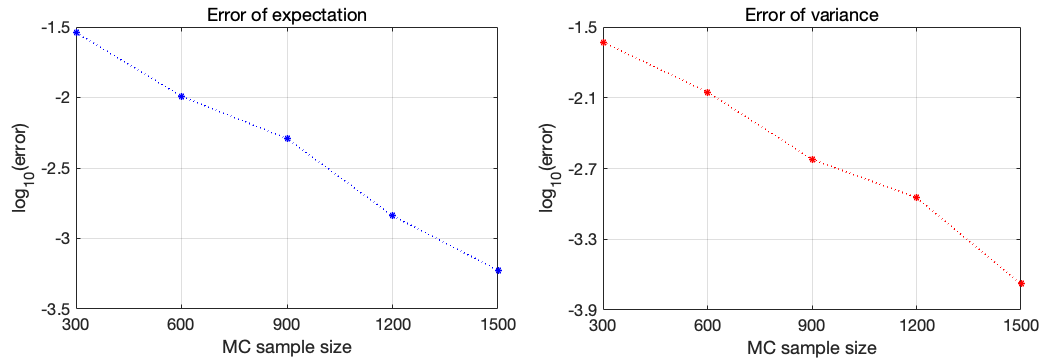}
\caption{Numerical errors of expectation $\Vert \hat{\bm{x}}_{M_{ref}} - \hat{\bm{x}}_{M,\theta}\Vert_{\bm{X}}$ (left) and variance $\Vert \mathbb{V}[\hat{\bm{x}}_{M_{ref}}] - \mathbb{V}[\hat{\bm{x}}_{M,\theta}]\Vert_{\bm{X}}$ (right), where $\hat{\bm{x}}_{M_{ref}}, \mathbb{V}[\hat{\bm{x}}_{M_{ref}}]$ are the expectation and variance of the reference solution generated by the standard MCFEM method with $M_{ref} = 2000$, and $\hat{\bm{x}}_{h,M,\theta},\mathbb{V}[\hat{\bm{x}}_{h,M,\theta}]$ denote the expectation and variance of the numerical solutions obtained by Algorithm \ref{alg2} with $\theta = 30\%$ and different numbers of MC realizations $M = 300,600,900,1200,1500$.}
\label{fig5.7}
\end{figure}

\section{Conclusions}\label{sec6}

In this work, we develop a low-rank solver for efficiently and accurately solving the Stokes-Darcy interface model with the random hydraulic conductivity and Beavers–Joseph interface condition. The randomness in the hydraulic conductivity is represented via the truncated Karhunen-Loe\.{v}e expansion. By decomposing the stochastic hydraulic conductivity and adopting a novel generalized low-rank matrix approximation method, the low-rank solver can significantly reduce the space and computational complexities from obtaining the inverse of a collection of high-dimensional perturbed stiffness matrices without losing accuracy. Error analysis is provided, and numerical experiments are established to demonstrate the feasibility and effectiveness of our low-rank solver.


\bibliographystyle{spmpsci}
\nocite{*}

\begin{thebibliography}{100}
\providecommand{\url}[1]{{#1}}
\providecommand{\urlprefix}{URL }
\expandafter\ifx\csname urlstyle\endcsname\relax
\providecommand{\doi}[1]{DOI~\discretionary{}{}{}#1}\else
\providecommand{\doi}{DOI~\discretionary{}{}{}\begingroup
	\urlstyle{rm}\Url}\fi

\bibitem{adams2003sobolev}
Adams, R.A., Fournier, J.J.: {Sobolev spaces}.
\newblock Elsevier (2003)

\bibitem{ali2017multilevel}
Ali, A.A., Ullmann, E., Hinze, M.: {Multilevel Monte Carlo analysis for optimal
	control of elliptic PDEs with random coefficients}.
\newblock SIAM/ASA Journal on Uncertainty Quantification \textbf{5}(1),
466--492 (2017)

\bibitem{arbogast2007computational}
Arbogast, T., Brunson, D.S.: {A computational method for approximating a
	Darcy-Stokes system governing a vuggy porous medium}.
\newblock Computational geosciences \textbf{11}, 207--218 (2007)

\bibitem{arbogast2009discretization}
Arbogast, T., Gomez, M.S.M.: {A discretization and multigrid solver for a
	Darcy-Stokes system of three dimensional vuggy porous media}.
\newblock Computational Geosciences \textbf{13}(3), 331--348 (2009)

\bibitem{arbogast2006homogenization}
Arbogast, T., Lehr, H.L.: {Homogenization of a Darcy-Stokes system modeling
	vuggy porous media}.
\newblock Computational Geosciences \textbf{10}, 291--302 (2006)

\bibitem{armentano2021approximations}
Armentano, M.G., Stockdale, M.L.: Approximations by mini mixed finite element
for the {Stokes-Darcy} coupled problem on curved domains.
\newblock International Journal of Numerical Analysis \& Modeling
\textbf{18}(2) (2021)

\bibitem{arnold1984stable}
Arnold, D.N., Brezzi, F., Fortin, M.: A stable finite element for the {Stokes}
equations.
\newblock Calcolo \textbf{21}(4), 337--344 (1984)

\bibitem{babuvska2007stochastic}
Babu{\v{s}}ka, I., Nobile, F., Tempone, R.: A stochastic collocation method for
elliptic partial differential equations with random input data.
\newblock SIAM Journal on Numerical Analysis \textbf{45}(3), 1005--1034 (2007)

\bibitem{babuvska2010stochastic}
Babu{\v{s}}ka, I., Nobile, F., Tempone, R.: A stochastic collocation method for
elliptic partial differential equations with random input data.
\newblock SIAM review \textbf{52}(2), 317--355 (2010)

\bibitem{babuska2004galerkin}
Babuska, I., Tempone, R., Zouraris, G.E.: Galerkin finite element
approximations of stochastic elliptic partial differential equations.
\newblock SIAM Journal on Numerical Analysis \textbf{42}(2), 800--825 (2004)

\bibitem{baishemirov2024efficient}
Baishemirov, Z., Berdyshev, A., Baigereyev, D., Boranbek, K.: {Efficient
	numerical implementation of the time-fractional stochastic Stokes-Darcy
	model}.
\newblock Fractal and Fractional \textbf{8}(8), 476 (2024)

\bibitem{bao2014hybrid}
Bao, F., Cao, Y., Webster, C., Zhang, G.: {A hybrid sparse-grid approach for
	nonlinear filtering problems based on adaptive-domain of the Zakai equation
	approximations}.
\newblock SIAM/ASA Journal on Uncertainty Quantification \textbf{2}(1),
784--804 (2014)

\bibitem{barenblatt1960basic}
Barenblatt, G.I., Zheltov, I.P., Kochina, I.: Basic concepts in the theory of
seepage of homogeneous liquids in fissured rocks [strata].
\newblock Journal of applied mathematics and mechanics \textbf{24}(5),
1286--1303 (1960)

\bibitem{barth2011multi}
Barth, A., Schwab, C., Zollinger, N.: {Multi-level Monte Carlo finite element
	method for elliptic PDEs with stochastic coefficients}.
\newblock Numerische Mathematik \textbf{119}, 123--161 (2011)

\bibitem{bartlett1951inverse}
Bartlett, M.S.: An inverse matrix adjustment arising in discriminant analysis.
\newblock The Annals of Mathematical Statistics \textbf{22}(1), 107--111 (1951)

\bibitem{berry1995using}
Berry, M.W., Dumais, S.T., O’Brien, G.W.: Using linear algebra for
intelligent information retrieval.
\newblock SIAM review \textbf{37}(4), 573--595 (1995)

\bibitem{betz2014numerical}
Betz, W., Papaioannou, I., Straub, D.: Numerical methods for the discretization
of random fields by means of the {Karhunen-Lo{\`e}ve} expansion.
\newblock Computer Methods in Applied Mechanics and Engineering \textbf{271},
109--129 (2014)

\bibitem{YBoubendir_STlupova_1}
Boubendir, Y., Tlupova, S.: {Stokes-Darcy boundary integral solutions using
	preconditioners}.
\newblock Journal of Computational Physics \textbf{228}(23), 8627--8641 (2009)

\bibitem{YBoubendir_STlupova_2}
Boubendir, Y., Tlupova, S.: {Domain decomposition methods for solving
	Stokes-Darcy problems with boundary integrals}.
\newblock SIAM Journal on Scientific Computing \textbf{35}(1), B82--B106 (2013)

\bibitem{burkardt2014truncated}
Burkardt, J.: The truncated normal distribution.
\newblock Department of Scientific Computing Website, Florida State University
\textbf{1}(35), 58 (2014)

\bibitem{cai2009numerical}
Cai, M., Mu, M., Xu, J.: Numerical solution to a mixed {Navier-Stokes/Darcy}
model by the two-grid approach.
\newblock SIAM Journal on Numerical Analysis \textbf{47}(5), 3325--3338 (2009)

\bibitem{YCao_MGunzburger_XMHe_XWang_1}
Cao, Y., Gunzburger, M., He, X.-M., Wang, X.: {Robin-Robin domain decomposition
	methods for the steady Stokes-Darcy model with Beaver-Joseph interface
	condition}.
\newblock Numerische Mathematik \textbf{117}(4), 601--629 (2011)

\bibitem{YCao_MGunzburger_XMHe_XWang_2}
Cao, Y., Gunzburger, M., He, X.-M., Wang, X.: {Parallel, non-iterative,
	multi-physics domain decomposition methods for time-dependent Stokes-Darcy
	systems}.
\newblock Mathematics of Computation \textbf{83}(288), 1617--1644 (2014)

\bibitem{YCao_MGunzburger_XHu_FHua_XWang_WZhao_1}
Cao, Y., Gunzburger, M., Hu, X., Hua, F., Wang, X., Zhao, W.: {Finite element
	approximation for Stokes-Darcy flow with Beavers-Joseph interface
	conditions}.
\newblock SIAM Journal on Numerical Analysis \textbf{47}(6), 4239--4256 (2010)

\bibitem{YCao_MGunzburger_FHua_XWang_1}
Cao, Y., Gunzburger, M., Hua, F., Wang, X.: {Coupled Stokes-Darcy model with
	Beavers-Joseph interface boundary condition}.
\newblock Communications in Mathematical Sciences \textbf{8}(1), 1--25 (2010)

\bibitem{ccecsmeliouglu2009primal}
{\c{C}}e{\c{s}}melio{\u{g}}lu, A., Rivi{\`e}re, B.: Primal discontinuous
{Galerkin} methods for time-dependent coupled surface and subsurface flow.
\newblock Journal of Scientific Computing \textbf{40}(1), 115--140 (2009)

\bibitem{chen2024stabilized}
Chen, H., Huang, C., Sun, S., Xiang, Y.: Stabilized variational formulations of
{Chorin}-type and artificial compressibility methods for the stochastic
{Stokes-Darcy} equations.
\newblock Journal of Scientific Computing \textbf{101}(1), 22 (2024)

\bibitem{chen2011parallel}
Chen, W., Gunzburger, M., Hua, F., Wang, X.: {A parallel Robin-Robin domain
	decomposition method for the Stokes-Darcy system}.
\newblock SIAM Journal on Numerical Analysis \textbf{49}(3), 1064--1084 (2011)

\bibitem{cliffe2011multilevel}
Cliffe, K.A., Giles, M.B., Scheichl, R., Teckentrup, A.L.: {Multilevel Monte
	Carlo methods and applications to elliptic PDEs with random coefficients}.
\newblock Computing and Visualization in Science \textbf{14}, 3--15 (2011)

\bibitem{MDiscacciati_1}
Discacciati, M.: Domain decomposition methods for the coupling of surface and
groundwater flows.
\newblock Ph.D. thesis, Ecole Polytechnique F\'{e}d\'{e}rale de Lausanne,
Switzerland (2004)

\bibitem{MDiscacciati_LGerardo-Giorda_1}
Discacciati, M., Gerardo-Giorda, L.: {Optimized Schwarz methods for the
	Stokes-Darcy coupling}.
\newblock IMA Journal of Numerical Analysis \textbf{38}(4), 1959--1983 (2018)

\bibitem{MDiscacciati_PGervasio_AGiacomini_AQuarteroni_1}
Discacciati, M., Gervasio, P., Giacomini, A., Quarteroni, A.: {The interface
	control domain decomposition method for Stokes-Darcy coupling}.
\newblock SIAM Journal on Numerical Analysis \textbf{54}(2), 1039--1068 (2016)

\bibitem{MDiscacciati_EMiglio_AQuarteroni_1}
Discacciati, M., Miglio, E., Quarteroni, A.: Mathematical and numerical models
for coupling surface and groundwater flows.
\newblock Applied Numerical Mathematics \textbf{43}(1-2), 57--74 (2002)

\bibitem{MDiscacciati_AQuarteroni_AValli_1}
Discacciati, M., Quarteroni, A., Valli, A.: {Robin-Robin domain decomposition
	methods for the Stokes-Darcy coupling}.
\newblock SIAM Journal on Numerical Analysis \textbf{45}(3), 1246--1268 (2007)

\bibitem{dong2013hybrid}
Dong, Q., Xu, D., Zhang, S., Bai, M., Li, Y.: A hybrid coupled model of surface
and subsurface flow for surface irrigation.
\newblock Journal of Hydrology \textbf{500}, 62--74 (2013)

\bibitem{douglas2018data}
Douglas, C., Hu, X., Bai, B., He, X.-M., Wei, M., Hou, J.: A data assimilation
enabled model for coupling dual porosity flow with free flow.
\newblock In: 2018 17th International Symposium on Distributed Computing and
Applications for Business Engineering and Science (DCABES), pp. 304--307.
IEEE (2018)

\bibitem{eckart1936approximation}
Eckart, C., Young, G.: The approximation of one matrix by another of lower
rank.
\newblock Psychometrika \textbf{1}(3), 211--218 (1936)

\bibitem{ervin2009coupled}
Ervin, V., Jenkins, E.W., Sun, S.: {Coupled generalized nonlinear Stokes flow
	with flow through a porous medium}.
\newblock SIAM Journal on Numerical Analysis \textbf{47}(2), 929--952 (2009)

\bibitem{evans2022partial}
Evans, L.C.: Partial differential equations, vol.~19.
\newblock American Mathematical Society (2022)

\bibitem{feng2012non}
Feng, W., He, X.-M., Wang, Z., Zhang, X.: {Non-iterative domain decomposition
	methods for a non-stationary Stokes-Darcy model with Beavers-Joseph interface
	condition}.
\newblock Applied Mathematics and Computation \textbf{219}(2), 453--463 (2012)

\bibitem{fishman2013monte}
Fishman, G.: {Monte Carlo: concepts, algorithms, and applications}.
\newblock Springer Science \& Business Media (2013)

\bibitem{frauenfelder2005finite}
Frauenfelder, P., Schwab, C., Todor, R.A.: Finite elements for elliptic
problems with stochastic coefficients.
\newblock Computer methods in applied mechanics and engineering
\textbf{194}(2-5), 205--228 (2005)

\bibitem{furman2008modeling}
Furman, A.: Modeling coupled surface-subsurface flow processes: a review.
\newblock Vadose Zone Journal \textbf{7}(2), 741--756 (2008)

\bibitem{YGao_XMHe_LMei_XYang_1}
Gao, Y., He, X.-M., Mei, L., Yang, X.: {Decoupled, linear, and energy stable
	finite element method for the Cahn-Hilliard-Navier-Stokes-Darcy phase field
	model}.
\newblock SIAM Journal on Scientific Computing \textbf{40}(1), B110--B137
(2018)

\bibitem{gatica2011convergence}
Gatica, G.N., Oyarz{\'u}a, R., Sayas, F.J.: {Convergence of a family of
	Galerkin discretizations for the Stokes-Darcy coupled problem}.
\newblock Numerical Methods for Partial Differential Equations \textbf{27}(3),
721--748 (2011)

\bibitem{ghanem2003stochastic}
Ghanem, R.G., Spanos, P.D.: Stochastic finite elements: a spectral approach.
\newblock Courier Corporation (2003)

\bibitem{ghasemizadeh2012groundwater}
Ghasemizadeh, R., Hellweger, F., Butscher, C., Padilla, I., Vesper, D., Field,
M., Alshawabkeh, A.: {Groundwater flow and transport modeling of karst
	aquifers, with particular reference to the North Coast Limestone aquifer
	system of Puerto Rico}.
\newblock Hydrogeology journal \textbf{20}(8), 1441 (2012)

\bibitem{VGirault_BRiviere_1}
Girault, V., Rivi{\`e}re, B.: {DG approximation of coupled Navier-Stokes and
	Darcy equations by Beaver-Joseph-Saffman interface condition}.
\newblock SIAM Journal on Numerical Analysis \textbf{47}(3), 2052--2089 (2009)

\bibitem{graczyk2020predicting}
Graczyk, K.M., Matyka, M.: Predicting porosity, permeability, and tortuosity of
porous media from images by deep learning.
\newblock Scientific reports \textbf{10}(1), 21488 (2020)

\bibitem{MGunzburger_XMHe_BLi_1}
Gunzburger, M., He, X.-M., Li, B.: {On Ritz projection and multi-step backward
	differentiation schemes in decoupling the Stokes-Darcy model}.
\newblock SIAM Journal on Numerical Analysis \textbf{56}(1), 397--427 (2018)

\bibitem{gunzburger2014stochastic}
Gunzburger, M., Webster, C.G., Zhang, G.: Stochastic finite element methods for
partial differential equations with random input data.
\newblock Acta Numerica \textbf{23}, 521--650 (2014)

\bibitem{guo2017stochastic}
Guo, L., Narayan, A., Zhou, T., Chen, Y.: Stochastic collocation methods via
$\ell_1$ minimization using randomized quadratures.
\newblock SIAM Journal on Scientific Computing \textbf{39}(1), A333--A359
(2017)

\bibitem{guth2021quasi}
Guth, P.A., Kaarnioja, V., Kuo, F.Y., Schillings, C., Sloan, I.H.: {A
	quasi-Monte Carlo method for optimal control under uncertainty}.
\newblock SIAM/ASA Journal on Uncertainty Quantification \textbf{9}(2),
354--383 (2021)

\bibitem{hanspal2006numerical}
Hanspal, N., Waghode, A., Nassehi, V., Wakeman, R.: {Numerical analysis of
	coupled Stokes/Darcy flows in industrial filtrations}.
\newblock Transport in porous media \textbf{64}, 73--101 (2006)

\bibitem{he2020artificial}
He, X.-M., Jiang, N., Qiu, C.: {An artificial compressibility ensemble algorithm
	for a stochastic Stokes-Darcy model with random hydraulic conductivity and
	interface conditions}.
\newblock International Journal for Numerical Methods in Engineering
\textbf{121}(4), 712--739 (2020)

\bibitem{XMHe_JLi_YLin_JMing_1}
He, X.-M., Li, J., Lin, Y., Ming, J.: {A domain decomposition method for the
	steady-state Navier-Stokes-Darcy model with Beavers-Joseph interface
	condition}.
\newblock SIAM Journal on Scientific Computing \textbf{37}(5), S264--S290
(2015)

\bibitem{helton2003latin}
Helton, J.C., Davis, F.J.: Latin hypercube sampling and the propagation of
uncertainty in analyses of complex systems.
\newblock Reliability Engineering \& System Safety \textbf{81}(1), 23--69
(2003)

\bibitem{hemanth2017deep}
Hemanth, D.J., Estrela, V.V.: Deep learning for image processing applications,
vol.~31.
\newblock IOS Press (2017)

\bibitem{horn1954doubly}
Horn, A.: Doubly stochastic matrices and the diagonal of a rotation matrix.
\newblock American Journal of Mathematics \textbf{76}(3), 620--630 (1954)

\bibitem{hou2023modeling}
Hou, J., Hu, D., Li, X., He, X.-M.: Modeling and a domain decomposition method
with finite element discretization for coupled dual-porosity flow and
{Navier-Stokes} flow.
\newblock Journal of Scientific Computing \textbf{95}(3), 67 (2023)

\bibitem{JHou_MQiu_XMHe_CGuo_MWei_BBai_1}
Hou, J., Qiu, M., He, X.-M., Guo, C., Wei, M., Bai, B.: {A dual-porosity-Stokes
	model and finite element method for coupling dual-porosity flow and free
	flow}.
\newblock SIAM Journal on Scientific Computing \textbf{38}(5), B710--B739
(2016)

\bibitem{hu2012experimental}
Hu, X., Wang, X., Gunzburger, M., Hua, F., Cao, Y.: {Experimental and
	computational validation and verification of the Stokes-Darcy and continuum
	pipe flow models for karst aquifers with dual porosity structure}.
\newblock Hydrological Processes \textbf{26}(13), 2031--2040 (2012)

\bibitem{jakeman2017generalized}
Jakeman, J.D., Narayan, A., Zhou, T.: A generalized sampling and
preconditioning scheme for sparse approximation of polynomial chaos
expansions.
\newblock SIAM Journal on Scientific Computing \textbf{39}(3), A1114--A1144
(2017)

\bibitem{jamal2020darcy}
Jamal, M.S., Awotunde, A.A.: {Darcy’s model with optimized permeability
	distribution for the simulation of Stokes flow and contaminant transport in
	karst aquifers}.
\newblock Hydrogeology Journal \textbf{28}(4), 1249--1267 (2020)

\bibitem{GKanschat_BRiviere_1}
Kanschat, G., Riviere, B.: {A strongly conservative finite element method for
	the coupling of Stokes and Darcy flow}.
\newblock Journal of Computational Physics \textbf{229}, 5933--5943 (2010)

\bibitem{karper2009unified}
Karper, T., Mardal, K.A., Winther, R.: {Unified finite element discretizations
	of coupled Darcy-Stokes flow}.
\newblock Numerical Methods for Partial Differential Equations: An
International Journal \textbf{25}(2), 311--326 (2009)

\bibitem{rainer1998numerical}
Kress, R.: Numerical Analysis.
\newblock Springer (2012)

\bibitem{kumar2018multigrid}
Kumar, P., Luo, P., Gaspar, F.J., Oosterlee, C.W.: {A multigrid multilevel
	Monte Carlo method for transport in the Darcy-Stokes system}.
\newblock Journal of Computational Physics \textbf{371}, 382--408 (2018)

\bibitem{WJLayton_FSchieweck_IYotov_1}
Layton, W.J., Schieweck, F., Yotov, I.: Coupling fluid flow with porous media
flow.
\newblock SIAM Journal on Numerical Analysis \textbf{40}(6), 2195--2218 (2002)

\bibitem{layton2013analysis}
Layton, W.J., Tran, H., Trenchea, C.: Analysis of long time stability and
errors of two partitioned methods for uncoupling evolutionary
groundwater-surface water flows.
\newblock SIAM Journal on Numerical Analysis \textbf{51}(1), 248--272 (2013)

\bibitem{lee2019low}
Lee, K., Elman, H.C., Sousedik, B.: A low-rank solver for the {Navier-Stokes}
equations with uncertain viscosity.
\newblock SIAM/ASA Journal on Uncertainty Quantification \textbf{7}(4),
1275--1300 (2019)

\bibitem{leykekhman2012investigation}
Leykekhman, D.: {Investigation of commutative properties of discontinuous
	Galerkin methods in PDE constrained optimal control problems}.
\newblock Journal of Scientific Computing \textbf{53}(3), 483--511 (2012)

\bibitem{li2018discontinuous}
Li, C., Qin, R., Ming, J., Wang, Z.: {A discontinuous Galerkin method for
	stochastic Cahn-Hilliard equations}.
\newblock Computers \& Mathematics with Applications \textbf{75}(6), 2100--2114
(2018)

\bibitem{RLi_YGao_JChen_LZhang_XMHe_ZChen_1}
Li, R., Gao, Y., Chen, J., Zhang, L., He, X.-M., Chen, Z.: {Discontinuous finite
	volume element method for a coupled Navier-Stokes-Cahn-Hilliard phase field
	model}.
\newblock Advances in Computational Mathematics \textbf{46}, 1--35 (2020)

\bibitem{li2024stokes}
Li, R., Gao, Y., Zhang, C.S., Chen, Z.: {A Stokes-Darcy-Darcy model and its
	discontinuous Galerkin method on polytopic grids}.
\newblock Journal of Computational Physics \textbf{501}, 112780 (2024)

\bibitem{LI2022251}
Li, Z., Hu, Z., Nie, F., Wang, R., Li, X.: Multi-view clustering based on
generalized low rank approximation.
\newblock Neurocomputing \textbf{471}, 251--259 (2022)

\bibitem{liang2005analytical}
Liang, Z., Shi, P.: An analytical algorithm for generalized low-rank
approximations of matrices.
\newblock Pattern Recognition \textbf{38}(11), 2213--2216 (2005)

\bibitem{lipnikov2014discontinuous}
Lipnikov, K., Vassilev, D., Yotov, I.: {Discontinuous Galerkin and mimetic
	finite difference methods for coupled Stokes-Darcy flows on polygonal and
	polyhedral grids}.
\newblock Numerische Mathematik \textbf{126}(2), 321--360 (2014)

\bibitem{liu2006non}
Liu, J., Chen, S.: Non-iterative generalized low rank approximation of
matrices.
\newblock Pattern recognition letters \textbf{27}(9), 1002--1008 (2006)

\bibitem{YLiu_YBoubendir_XMHe_YHe_1}
Liu, Y., Boubendir, Y., He, X.-M., He, Y.: {New optimized Robin-Robin domain
	decomposition methods using Krylov solvers for the Stokes-Darcy system}.
\newblock SIAM Journal on Scientific Computing \textbf{44}(4), B1068--B1095
(2022)

\bibitem{liu2021novel}
Liu, Y., He, Y., Li, X., He, X.-M.: {A novel convergence analysis of Robin-Robin
	domain decomposition method for Stokes-Darcy system with Beavers-Joseph
	interface condition}.
\newblock Applied Mathematics Letters \textbf{119}, 107181 (2021)

\bibitem{loeve1977elementary}
Lo{\`e}ve, M., Loeve, M.: Elementary probability theory.
\newblock Springer (1977)

\bibitem{lu2008simplified}
Lu, C., Liu, W., An, S.: A simplified {GLRAM} algorithm for face recognition.
\newblock Neurocomputing \textbf{72}(1-3), 212--217 (2008)

\bibitem{mcgivney2014svd}
McGivney, D.F., Pierre, E., Ma, D., Jiang, Y., Saybasili, H., Gulani, V.,
Griswold, M.A.: {SVD} compression for magnetic resonance fingerprinting in
the time domain.
\newblock IEEE transactions on medical imaging \textbf{33}(12), 2311--2322
(2014)

\bibitem{metropolis1949monte}
Metropolis, N., Ulam, S.: {The Monte Carlo method}.
\newblock Journal of the American statistical association \textbf{44}(247),
335--341 (1949)

\bibitem{mikelic2000interface}
Mikelic, A., J{\"a}ger, W.: On the interface boundary condition of {Beavers,
	Joseph, and Saffman}.
\newblock SIAM Journal on Applied Mathematics \textbf{60}(4), 1111--1127 (2000)

\bibitem{mu2007two}
Mu, M., Xu, J.: A two-grid method of a mixed {Stokes-Darcy} model for coupling
fluid flow with porous media flow.
\newblock SIAM journal on numerical analysis \textbf{45}(5), 1801--1813 (2007)

\bibitem{neven2021modeling}
Neven, K., Sorab, P.: Modeling of groundwater flow and transport in coastal
karst aquifers.
\newblock Hydrogeology Journal \textbf{29}(1), 249--258 (2021)

\bibitem{niederreiter1992random}
Niederreiter, H.: {Random number generation and quasi-Monte Carlo methods}.
\newblock SIAM (1992)

\bibitem{nobile2008anisotropic}
Nobile, F., Tempone, R., Webster, C.G.: An anisotropic sparse grid stochastic
collocation method for partial differential equations with random input data.
\newblock SIAM Journal on Numerical Analysis \textbf{46}(5), 2411--2442 (2008)

\bibitem{nobile2008sparse}
Nobile, F., Tempone, R., Webster, C.G.: A sparse grid stochastic collocation
method for partial differential equations with random input data.
\newblock SIAM Journal on Numerical Analysis \textbf{46}(5), 2309--2345 (2008)

\bibitem{novak1988stochastic}
Novak, E.: Stochastic properties of quadrature formulas.
\newblock Numerische Mathematik \textbf{53}(5), 609--620 (1988)

\bibitem{pao2012nonlinear}
Pao, C.V.: Nonlinear parabolic and elliptic equations.
\newblock Springer Science \& Business Media (2012)

\bibitem{qin2023adaptive}
Qin, Y., Chen, L., Wang, Y., Li, Y., Li, J.: {An adaptive time-stepping DLN
	decoupled algorithm for the coupled Stokes-Darcy model}.
\newblock Applied Numerical Mathematics \textbf{188}, 106--128 (2023)

\bibitem{roding2020predicting}
R{\"o}ding, M., Ma, Z., Torquato, S.: Predicting permeability via statistical
learning on higher-order microstructural information.
\newblock Scientific reports \textbf{10}(1), 15239 (2020)

\bibitem{rui2009unified}
Rui, H., Zhang, R.: {A unified stabilized mixed finite element method for
	coupling Stokes and Darcy flows}.
\newblock Computer Methods in Applied Mechanics and Engineering
\textbf{198}(33-36), 2692--2699 (2009)

\bibitem{LShan_HZheng_1}
Shan, L., Zheng, H.: {Partitioned time stepping method for fully evolutionary
	Stokes-Darcy flow with Beavers-Joseph interface conditions}.
\newblock SIAM Journal on Numerical Analysis \textbf{51}(2), 813--839 (2013)

\bibitem{sherman1950adjustment}
Sherman, J., Morrison, W.J.: Adjustment of an inverse matrix corresponding to a
change in one element of a given matrix.
\newblock The Annals of Mathematical Statistics \textbf{21}(1), 124--127 (1950)

\bibitem{shi2015robust}
Shi, J., Yang, W., Zheng, X.: Robust generalized low rank approximations of
matrices.
\newblock Plos one \textbf{10}(9), e0138028 (2015)

\bibitem{srebro2003weighted}
Srebro, N., Jaakkola, T.: Weighted low-rank approximations.
\newblock In: Proceedings of the 20th international conference on machine
learning (ICML-03), pp. 720--727 (2003)

\bibitem{sun2016priori}
Sun, T., Shen, W., Gong, B., Liu, W.: {A priori error estimate of stochastic
	Galerkin method for optimal control problem governed by stochastic elliptic
	PDE with constrained control}.
\newblock Journal of Scientific Computing \textbf{67}(2), 405--431 (2016)

\bibitem{takbiri2022deep}
Takbiri, S., Kazemi, M., Takbiri-Borujeni, A., McIlvain, J.: A deep learning
approach to predicting permeability of porous media.
\newblock Journal of Petroleum Science and Engineering \textbf{211}, 110069
(2022)

\bibitem{tian2022improved}
Tian, J., Qi, C., Peng, K., Sun, Y., Mundher~Yaseen, Z.: Improved permeability
prediction of porous media by feature selection and machine learning methods
comparison.
\newblock Journal of Computing in Civil Engineering \textbf{36}(2), 04021040
(2022)

\bibitem{STlupova_1}
Tlupova, S.: {A domain decomposition solution of the Stokes-Darcy system in 3D
	based on boundary integrals}.
\newblock Journal of Computational Physics \textbf{450}, 110824 (2022)

\bibitem{STlupova_RCortez_1}
Tlupova, S., Cortez, R.: Boundary integral solutions of coupled {Stokes and
	Darcy} flows.
\newblock Journal of Computational Physics \textbf{228}(1), 158--179 (2009)

\bibitem{todor2005sparse}
Todor, R.A.: {Sparse perturbation algorithms for elliptic PDE's with stochastic
	data}.
\newblock Ph.D. thesis, ETH Zurich (2005)

\bibitem{turk1991eigenfaces}
Turk, M., Pentland, A.: Eigenfaces for recognition.
\newblock Journal of cognitive neuroscience \textbf{3}(1), 71--86 (1991)

\bibitem{wedin1973perturbation}
Wedin, P.{\AA}.: Perturbation theory for pseudo-inverses.
\newblock BIT Numerical Mathematics \textbf{13}, 217--232 (1973)

\bibitem{woodbury1950inverting}
Woodbury, M.A.: Inverting modified matrices.
\newblock Department of Statistics, Princeton University (1950)

\bibitem{xiang2023optimal}
Xiang, Y., Huang, C., Chen, H.: Optimal convergence analysis of a fully
discrete scheme for the stochastic {Stokes-Darcy} equations.
\newblock Journal of Scientific Computing \textbf{94}(1), 13 (2023)

\bibitem{xiu2002modeling}
Xiu, D., Karniadakis, G.E.: Modeling uncertainty in steady state diffusion
problems via generalized polynomial chaos.
\newblock Computer methods in applied mechanics and engineering
\textbf{191}(43), 4927--4948 (2002)

\bibitem{xiu2002wiener}
Xiu, D., Karniadakis, G.E.: The {Wiener-Askey} polynomial chaos for stochastic
differential equations.
\newblock SIAM journal on scientific computing \textbf{24}(2), 619--644 (2002)

\bibitem{yang2021stochastic}
Yang, Z., Li, X., He, X.-M., Ming, J.: {A stochastic collocation method based on
	sparse grids for a stochastic Stokes-Darcy model}.
\newblock Discrete and Continuous Dynamical Systems-Series S  (2021)

\bibitem{yang2022multigrid}
Yang, Z., Ming, J., Qiu, C., Li, M., He, X.-M.: {A multigrid multilevel Monte
	Carlo method for Stokes-Darcy model with random hydraulic conductivity and
	Beavers-Joseph condition}.
\newblock Journal of Scientific Computing \textbf{90}(2), 68 (2022)

\bibitem{ye2004generalized}
Ye, J.: Generalized low rank approximations of matrices.
\newblock In: Proceedings of the twenty-first international conference on
Machine learning, p. 112 (2004)

\bibitem{youssef2019modelling}
Youssef, A.A., Awotunde, A.A.: Modelling fluid flow in karst reservoirs using
{Darcy} model with estimated permeability distribution.
\newblock Computers \& Geosciences \textbf{133}, 104311 (2019)

\bibitem{zhang2019fabrication}
Zhang, Y., Zhou, C., Qu, C., Wei, M., He, X.-M., Bai, B.: Fabrication and
verification of a glass-silicon-glass micro-/nanofluidic model for
investigating multi-phase flow in shale-like unconventional dual-porosity
tight porous media.
\newblock Lab on a Chip \textbf{19}(24), 4071--4082 (2019)

\bibitem{zhao2003face}
Zhao, W., Chellappa, R., Phillips, P.J., Rosenfeld, A.: Face recognition: A
literature survey.
\newblock ACM computing surveys (CSUR) \textbf{35}(4), 399--458 (2003)

\bibitem{zhao2016robust}
Zhao, X., An, G., Cen, Y., Wang, H., Zhao, R.: Robust generalized low rank
approximations of matrices for video denoising.
\newblock In: 2016 IEEE 13th International Conference on Signal Processing
(ICSP), pp. 815--818. IEEE (2016)

\bibitem{zhu2024low}
Zhu, Y., Ming, J., Zhu, J., Wang, Z.: {Low rank approximation method for
	perturbed linear systems with applications to elliptic type stochastic PDEs}.
\newblock Computers \& Mathematics with Applications \textbf{173}, 60--73
(2024)

\end{thebibliography}

\end{document}